\newtheorem{theorem}{Theorem}[section]
\newtheorem{lemma}[theorem]{Lemma}
\newtheorem{corollary}[theorem]{Corollary}
\newtheorem{proposition}[theorem]{Proposition}
\newtheorem{definition}[theorem]{Definition}
\newtheorem{assumption}[theorem]{Assumption}
\newcommand{\cmark}{\ding{52}}%
\newcommand{\xmark}{\ding{55}}%
\DeclareMathOperator{\argmin}{\mbox{argmin}}
\def\argmin{\mathop{\rm argmin}}
\numberwithin{equation}{section}
\def \endprf{\hfill {\vrule height6pt width6pt depth0pt}\medskip}
\newenvironment{proof}{\noindent {\bf Proof} }{\endprf\par}
\author{
Jiawei Zhang
\thanks{Shenzhen Research Institute of Big Data, School of Science and Engineering, The Chinese University of Hong Kong, Shenzhen, China.
	Email: 216019001@link.cuhk.edu.cn.}
\and
Peijun Xiao
\thanks{Coordinated Science Laboratory, Department of ISE, University of Illinois at Urbana-Champaign, Urbana, IL. 
	Email: peijunx2@illinois.edu.}
\and
Ruoyu Sun
\thanks{Coordinated Science Laboratory, Department of ISE, University of Illinois at Urbana-Champaign, Urbana, IL. 
	Email: ruoyus@illinois.edu.}
\and
Zhi-Quan Luo
\thanks{Shenzhen Research Institute of Big Data, School of Science and Engineering, The Chinese University of Hong Kong, Shenzhen, China.
	Email: luozq@cuhk.edu.cn.}
}
\title{\bf A Single-Loop Smoothed Gradient Descent-Ascent Algorithm for Nonconvex-Concave Min-Max Problems}
\begin{document}
\maketitle
\begin{abstract}
Nonconvex-concave min-max problem arises in many machine learning applications including minimizing a pointwise maximum of a set of nonconvex functions and robust adversarial training of neural networks. A popular approach to solve this problem is the gradient descent-ascent (GDA) algorithm which unfortunately can exhibit oscillation in case of nonconvexity. In this paper, we introduce  a ``smoothing" scheme which can be combined with GDA to stabilize the oscillation and ensure convergence to a stationary solution. We prove that the stabilized GDA algorithm can achieve an $O(1/\epsilon^2)$ iteration complexity for minimizing the pointwise maximum of a finite collection of nonconvex functions.
Moreover, the smoothed GDA algorithm achieves an $O(1/\epsilon^4)$ iteration complexity for general nonconvex-concave problems. Extensions of this stabilized GDA algorithm to multi-block cases are presented. To the best of our knowledge, this is the first algorithm to achieve $O(1/\epsilon^2)$ for a class of nonconvex-concave problem. We illustrate the practical efficiency of the stabilized GDA algorithm on robust training.

\end{abstract}

\section{Introduction}
Min-max problems have drawn considerable interest from the machine learning and other engineering communities. They appear in applications such as adversarial learning \cite{goodfellow2014generative, arjovsky2017wasserstein, madry2017towards},
robust optimization \cite{ben2009robust, delage2010distributionally, namkoong2016stochastic, namkoong2017variance}, empirical risk minimization \cite{zhang2017stochastic, tan2018stochastic}, and reinforcement learning \cite{du2017stochastic, dai2017sbeed}.
Concretely speaking, a min-max problem is in the form:
\begin{equation}\label{minimax1}
\min_{x\in X} \max_{y\in Y}f(x, y),
\end{equation}
where $X\subseteq \mathbb{R}^n$ and $Y\in \mathbb{R}^m$ are convex and closed sets and $f$ is a smooth function.
In the literature,  the convex-concave min-max problem, where $f$ is convex in $x$ and concave in $y$, is well-studied \cite{nemirovski2004prox, nesterov2007dual, monteiro2010complexity, palaniappan2016stochastic, gidel2016frank, mertikopoulos2018optimistic, hamedani2018iteration, mokhtari2019unified}.
However, many practical applications involve nonconvexity, and this is the focus of the current paper.
Unlike the convex-concave setting where we can compute the global stationary solution efficiently, to obtain a global optimal solution for the setting where $f$ is nonconvex with respect to $x$ is difficult.

In this paper, we consider the nonconvex-concave min-max problem \eqref{minimax1} where $f$ is nonconvex in $x$ but concave of $y$, as well as a special case in the following form:
\begin{equation}\label{minmax2}
\min_x\max_{y\in Y}F(x)^Ty,
\end{equation}
where $Y=\{(y_1, \cdots, y_m)^T\mid \sum_{i=1}^my_i=1, y_i\ge 0\}$ is a probability simplex and 
$$F(x)=(f_1(x), f_2(x), \cdots, f_m(x))^T$$ is a smooth map from $\mathbb{R}^n$ to $\mathbb{R}^m$.
Note that \eqref{minmax2} is equivalent to the problem of minimizing the point-wise maximum of a finite collection of functions:
\begin{equation}\label{minmax discrete set}
\min_x\max_{1\le i\le m}f_i(x).
\end{equation}
If $f_i(x)=g(x, \xi_i)$ is a loss function or a negative utility function at a data point $\xi_i$, then problem \eqref{minmax discrete set} is to find the best parameter of the worst data points.
This formulation is frequently used in machine learning and other fields. For example, adversarial training \cite{nouiehed2019solving, madry2017towards}, fairness training \cite{nouiehed2019solving} and distribution-agnostic meta-learning  \cite{collins2020distribution} can be formulated as \eqref{minmax discrete set}. We will discuss the formulations for these applications in details in Section \ref{sec: application}.

\begin{table*}[t]
\begin{center}
\begin{small}
\begin{sc}
\begin{tabular}{cccccc}
\toprule
Algorithm & Complexity & Simplicity & Multi-block \\
\midrule
\cite{ostrovskii2020efficient} & $\mathcal{O}(1/\epsilon^{2.5})$ & Triple-loop &  \xmark\\
\cite{lin2020near} & $\mathcal{O}(1/\epsilon^{2.5})$ & Triple-loop &  \xmark\\
\cite{nouiehed2019solving} & $\mathcal{O}(1/\epsilon^{3.5})$ & Double-loop &  \xmark \\
\cite{lu2019hybrid} & $\mathcal{O}(1/\epsilon^4)$ & Single-loop &  \cmark \\
This paper & $\mathcal{O}(1/\epsilon^2)$ & Single-loop &  \cmark\\
\bottomrule
\end{tabular}
\end{sc}
\end{small}
\end{center}
\caption{Comparison of the algorithm in this paper with other works in solving problem (\ref{minmax2}). Our algorithm has a better convergence rate among others, and the single-loop and multi-block design make the algorithm suitable for solving large-scale problems efficiently.}
\label{table: comparison}
\vspace{-5mm}
\end{table*}

Recently, various algorithms have been proposed for nonconvex-concave min-max problems
\cite{thekumparampil2019efficient, jin2019minmax, rafique2018non, nouiehed2019solving, lin2019gradient, lu2019hybrid,ostrovskii2020efficient, lin2020near}.
These algorithms can be classified into three types
 based on the structure: single-loop, double-loop and triple loop.
Here a single-loop algorithm is an iterative algorithm where each iteration step has a closed form update, while a double-loop algorithm uses an iterative algorithm to approximately solve the sub-problem at each iteration. A triple-loop algorithm uses a double-loop algorithm to approximately solve a sub-problem at every iteration.
To find an $\epsilon$-stationary solution, double-loop and tripe-loop algorithms have two main drawbacks.
First, these existing multi-loop algorithms require at least $\mathcal{O}(1/\epsilon^2)$ outer iterations, while the iteration numbers of the other inner loop(s) also depend on $\epsilon$. Thus, the iteration complexity of the existing multi-loop algorithms is more than $\mathcal{O}(1/\epsilon^2)$ for \eqref{minmax2}. Among all the existing algorithms, the best known iteration complexity is $\mathcal{O}(1/\epsilon^{2.5})$ from two triple-loop algorithms \cite{ostrovskii2020efficient, lin2020near}.
Since the best-known lower bound for solving \eqref{minmax2} using first-order algorithms is $\mathcal{O}(1/\epsilon^2)$, so there is a gap between the existing upper bounds and the lower bound.
Another drawback of multi-loop algorithms is
their difficulty in solving problems with multi-block structure,
since the acceleration steps used in their inner loops cannot be easily extended to multi-block cases,
and a standard double-loop algorithm without acceleration can be very slow.
This is unfortunate because the min-max problems with block structure is important for distributed training \cite{lu2019hybrid} in machine learning and signal processing. 

Due to the aforementioned two drawbacks of double-loop and triple-loops algorithms, we focus in this paper on single-loop algorithms
 in hope to achieve the optimal iteration complexity $\mathcal{O}(1/\epsilon^2)$ for the nonconvex-concave problem  \eqref{minmax2}.
Notice that the nonconvex-concave applications in the aforementioned studies  \cite{thekumparampil2019efficient, jin2019minmax, rafique2018non, nouiehed2019solving, lin2019gradient, lu2019hybrid,ostrovskii2020efficient, lin2020near}
can all be formulated as \eqref{minmax2},
although the iteration complexity results derived in these papers are only for general nonconvex-concave problems. In other words,
the structure of \eqref{minmax2} is not used in the theoretical analysis.
One natural question to ask is: \textbf{can we design a single loop algorithm with an iteration complexity lower than $\mathcal{O}(1/\epsilon^{2.5})$ for the min-max problem \eqref{minmax2}?} 

\textbf{Existing Single-loop algorithms.} A simple single-loop algorithm is the so-called Gradient Descent Ascent (GDA)  which alternatively performs gradient descent to the minimization problem and gradient ascent to the maximization problem.
GDA can generate an $\epsilon$-stationary solution for a nonconvex-strongly-concave problem with iteration complexity $\mathcal{O}(1/\epsilon^2)$ \cite{lin2019gradient}.
However, GDA will oscillate with constant stepsizes around the solution if the maximization problem is not strongly concave \cite{mokhtari2019unified}. So the stepsize should be proportional to $\epsilon$ if we want an $\epsilon$-solution.
These limitations slow down GDA which has an $\mathcal{O}(1/\epsilon^5)$ iteration complexity for nonconvex-concave problems.
Another single-loop algorithm \cite{lu2019hybrid} requires diminishing step-sizes to guarantee convergence and its complexity is $\mathcal{O}(1/\epsilon^4)$. 
\cite{xu2020unified} also proposes a single-loop algorithm for min-max problems by performing GDA to a regularized version of the original min-max problem and the regularization term is diminishing. 
The iteration complexity bounds given in the references \cite{lu2019hybrid, lin2019gradient, xu2020unified} are worse than the ones from multi-loop algorithms using acceleration in the subproblems.

%

In this paper, we propose a single-loop ``smoothed gradient descent-ascent'' algorithm with optimal iteration complexity for the nonconvex-concave problem \eqref{minmax2}. 
Inspired by \cite{zhang2020proximal, zhang2022global}, 
to fix the oscillation issue of GDA discussed above, we introduce an exponentially weighted sequence $z^t$ of the primal iteration sequence $x^t$ and include a quadratic proximal term centered at $z^t$ to objective function. Then we perform a GDA step to the proximal function instead of the original objective. With this smoothing technique, an $\mathcal{O}(1/\epsilon^2)$ iteration complexity can be achieved for problem \eqref{minmax2} under mild assumptions. Our contributions are three fold.
\begin{itemize}
    \item \textbf{Optimal order in convergence rate.} We propose a single-loop algorithm \textbf{Smoothed-GDA} for nonconvex-concave problems which finds an $\epsilon$-stationary solution within $\mathcal{O}(1/\epsilon^2)$ iterations for
    problem \eqref{minmax2} under mild assumptions.
     \item \textbf{General convergence results.} The {\bf Smoothed-GDA} algorithm can also be applied to solve general nonconvex-concave problems with an $\mathcal{O}(1/\epsilon^4)$ iteration complexity. This complexity is the same as in \cite{lu2019hybrid}. However, the current algorithm does not require the compactness of the domain $X$, which significantly extends the applicability of the algorithm.
    \item \textbf{Multi-block settings.} We extend the {\bf Smoothed-GDA} algorithm to the multi-block setting and give the same convergence guarantee as the one-block case.
\end{itemize}

The paper is organized as follows. In Section \ref{sec: application}, we describe some applications of nonconvex-concave problem \eqref{minmax2} or \eqref{minmax discrete set}. The details of the {\bf Smoothed-GDA} algorithm as well as the main theoretical results are given in Section \ref{sec: alg}. The proof sketch is given in Section \ref{sec: proof sketch}. The proofs and the details of the numerical experiments are in the appendix.

\section{Representative Applications}\label{sec: application}
We give three application examples which are in the min-max form \eqref{minmax2}.

\vspace{0.8mm}
\textbf{1. Robust learning from multiple distributions.} Suppose the data set is from $n$ distributions: $D_1, \cdots, D_n$. Each $D_i$ is a different perturbed version of the underlying true distribution $D_0$. Robust training is formulated as minimizing the maximum of expected loss over the $n$ distributions as
\begin{align}
   \min_{x \in X} \max_{i} \mathbb{E}_{a\sim D_{i}}[F(x ; a)]=\min _{x \in X} \max _{y \in Y} \sum_{i=1}^{m} y_{i} f_{i}(x),
\end{align}
where $Y$ is a probability simplex, $F(x; a)$ represents the loss with model parameter $x$ on a data sample $a$. Notice that $f_i(x) = \mathbb{E}_{a\sim D_{i}}[F(x ; a)]$ is the expected loss under distribution $D_i$. In adversarial learning \cite{madry2017towards, kurakin2016adversarial, goodfellow2014explaining}, $D_i$ corresponds to the distribution that is used to generate adversarial examples. In Section \ref{sec: experiment}, we will provide a detailed formulation of adversarial learning on the data set MNIST and apply the Smoothed GDA algorithm to this application.

\vspace{0.8mm}
\textbf{2. Fair models}. In machine learning, it is common that the models may be unfair,
i.e. the models might discriminate against individuals based on their membership in some group \cite{hardt2016equality, dwork2012fairness}.
For example, an algorithm for predicting a person's salary might use that person's protected attributes, such as gender, race, and color. Another example is training a logistic regression model for classification which can be biased against certain categories. To promote fairness, \cite{mohri2019agnostic} proposes a framework to minimize the maximum loss incurred by the different categories:
\begin{align}
    \min_{x \in X} \max_{i} f_i(x),
\end{align}
where $x$ represents the model parameters and $f_i$ is the corresponding loss for category $i$.

\vspace{0.8mm}
\textbf{3. Distribution-agnostic meta-learning.} Meta-learning is a field about learning to learn, i.e. to learn the optimal model properties so that the model performance can be improved. One popular choice of meta-learning problem is called gradient-based Model-Agnostic Meta-Learning (MAML) \cite{finn2017model}. The goal of MAML is to learn a good global initialization such that for any new tasks, the model still performs well after one gradient update from the initialization.

One limitation of MAML is that it implicitly assumes the tasks come from a particular distribution, and optimizes the expected or sample average loss over tasks drawn from this distribution. This limitation might lead to arbitrarily bad worst-case performance and unfairness. To mitigate these difficulties, \cite{collins2020distribution} proposed a distribution-agnostic formulation of MAML:
\begin{align}\label{eqn: dis MAML}
    \min_{x\in X} \max_{i} f_i(x - \alpha \nabla f_i(x)).
\end{align}
Here, $f_i$ is the loss function associated with the $i$-th task, $x$ is the parameter taken from the feasible set $X$, and $\alpha$ is the stepsize used in the MAML for the gradient update. Notice that each $f_i$ is still a function over $x$, even though we take one gradient step before evaluating the function. This formulation \eqref{eqn: dis MAML} finds the initial point that minimizes the objective function after one step of gradient over all possible loss functions. It is shown that solving the distribution-agnostic meta-learning problem improves the worst-case performance over that of the original MAML \cite{collins2020distribution} across the tasks.

\section{Smoothed GDA Algorithm and Its Convergence}\label{sec: alg}




Before we introduce the Smoothed-GDA algorithm, we first define the  stationary solution and the $\epsilon$-stationary solution of problem \eqref{minimax1}.
\begin{definition}\label{def: eps stationary solution}
Let $\mathbf{1}_X(x), \mathbf{1}_Y(y)$ be the indicator functions of the sets $X$ and $Y$ respectively. A pair $(x, y)$ is an $\epsilon$-solution set of problem \eqref{minimax1} if there exists a pair $(u, v)$ such that
\begin{align}
u\in \nabla_xf(x, y)+\partial {\mathbf{1}_X(x)},  \quad
v\in-\nabla_yf(x, y)+\partial{\mathbf{1}_Y(y)}, \quad \textrm{and} \quad
\|u\|, \|v\|\le \epsilon,
\end{align}
where $\partial g(\cdot)$ denotes the sub-gradient of a function $g$. A pair $(x, y)$ is a 
game stationary point
if $u = 0, v = 0$.
\end{definition}

\begin{definition}
The projection of a point $y$ onto a set $X$ is defined as $P_X(y) = \argmin_{x \in X} \frac{1}{2} \|x - y\|^2$.
\end{definition}

\subsection{Smoothed Gradient Descent-Ascent (Smoothed-GDA)}
 A simple algorithm for solving min-max problems is the Gradient Descent Ascent (GDA) algorithm (Algorithm \ref{Alg:GDA}),
 which performs a gradient
 descent to the $\min$ problem and a gradient ascent to the $\max$ problem alternatively.
 It is well-known that with constant step size, GDA can oscillate between iterates and fail to converge even for a simple bilinear min-max problem: $\min_{x\in \mathbb{R}^n}\max_{y\in \mathbb{R}^n}x^Ty.$

To fix the oscillation issue, we introduce a ``smoothing''  technique to the primal updates.
Note that smoothing is a common technique in traditional optimization such as Moreau-Yosida smoothing \cite{parikh2014proximal} and Nesterov's smoothing \cite{nesterov2005smooth}.
More concretely, we introduce an auxiliary sequence $\{z^t\}$ and  define a function $K(x, z; y)$ as
\begin{equation}
K(x, z;y)=f(x, y)+\frac{p}{2}\|x-z\|^2,
\end{equation}
where $p>0$ is a constant, and we perform gradient descent and gradient ascent alternatively on this function instead of the original function $f(x, y)$. After performing one-step of GDA to the function $K(x^t, z^t; y^t)$, $z^t$ is updated by an averaging step.
The ``Smoothed GDA'' algorithm is formally presented in Algorithm \ref{Alg2}. Note that our algorithm is different from the one in \cite{xu2020unified}, as \cite{xu2020unified} uses an regularization term $\alpha_t(\|x\|^2 - \|y\|^2)$ and requires this term to diminishing.

\begin{minipage}{0.46\textwidth}
\begin{algorithm}[H]
    \caption{GDA}
    \label{Alg:GDA}
\begin{algorithmic}[1]
\STATE Initialize $x^0, y^0$;
\STATE Choose $c, \alpha > 0$;
\FOR{$t=0,1,2,\ldots,$}
\STATE $x^{t+1}=P_X(x^t-c\nabla_xf(x^t, y^t))$;
\STATE $y^{t+1}=P_Y(y^t+\alpha\nabla_yf(x^{t+1}, y^t))$;
\ENDFOR
\end{algorithmic}
\end{algorithm}
\end{minipage}
\hfill
\begin{minipage}{0.46\textwidth}
\begin{algorithm}[H]
    \caption{Smoothed-GDA}
    \label{Alg2}
\begin{algorithmic}[1]
\STATE Initialize $x^0, z^0, y^0$ and $0< \beta \le 1$.
\FOR{$t=0,1,2,\ldots,$}
\STATE $x^{t+1}=P_X(x^t-c\nabla_xK(x^t, z^t; y^t))$;
\STATE $y^{t+1}=P_Y(y^t+\alpha\nabla_yK(x^{t+1}, z^t;y^t))$;
\STATE $z^{t+1}=z^t+\beta(x^{t+1}-z^t)$,
\ENDFOR
\end{algorithmic}
\end{algorithm}
\end{minipage}

\vspace{3mm}

Notice that when $\beta=1$, Smoothed-GDA is just the standard GDA. Furthermore, if the variable $x$ has a block structure, i.e., $x$ can be decomposed into $N$ blocks as
$$x=(x_1^T, \cdots, x_N^T)^T,$$
then Algorithm \ref{Alg2} can be extended to a multi-block version which we call the Smoothed Block Gradient Descent Ascent (Smoothed-BGDA) Algorithm (see Algorithm  \ref{Alg3}).
In the multi-block version, we update the primal variable blocks alternatingly
and use the same strategy to update the dual variable and the auxiliary variable as in the single-block version.
\begin{algorithm}[ht]
    \caption{Smoothed Block Gradient Descent Ascent (Smoothed-BGDA)}
    \label{Alg3}
\begin{algorithmic}[1]
\STATE Initialize $x^0, z^0, y^0$; 
\FOR{$t=0,1,2,\ldots,$}
\FOR{$i=1,2,\ldots,N$}
\STATE $x^{t+1}_i=P_X(x^t_i-c\nabla_{x_i}K(x^{t+1}_1, x_2^{t+1}, \cdots, x_{i-1}^{t+1}, x_i^t, \cdots, x_N^t, z^t; y^t))$;
\ENDFOR
\STATE $y^{t+1}=P_Y(y^t+\alpha\nabla_yK(x^{t+1}, z^t;y^t))$;
\STATE $z^{t+1}=z^t+\beta(x^{t+1}-z^t)$, where $0< \beta \le 1$;
\ENDFOR
\end{algorithmic}
\end{algorithm}

\subsection{Iteration Complexity for Nonconvex-Concave Problems}
In this subsection, we present the iteration complexities of Algorithm \ref{Alg2} and Algorithm \ref{Alg3} for general
nonconvex-concave problems \eqref{minimax1}. We first state some basic assumptions.
\begin{assumption}\label{basic-ass}
We assume the following.
\begin{enumerate}
\item $f(x, y)$ is smooth and the gradients $\nabla_xf(x, y), \nabla_yf(x, y)$ are $L$-Lipschitz continuous.
\item $Y$ is a closed, convex and compact set of $\mathbb{R}^m$. $X$ is a closed and convex set.
\item The function $\psi(x)=\max_{y\in Y}f(x, y)$ is bounded from below by some finite constant $\underline{f}>-\infty$.
\end{enumerate}
\end{assumption}
\begin{theorem}\label{general}
Consider solving problem \eqref{minimax1} by Algorithm \ref{Alg2} (or Algorithm \ref{Alg3}).
Suppose Assumption~\ref{basic-ass} holds, and we choose the algorithm parameters to satisfy $p>3L,\ c<1/(p+L)$ and
\begin{align}\label{eqn: parameters for Alg 2 general case}
     \alpha<\min\left\{\frac{1}{11L}, \frac{c^2(p-L)^2}{4L(1+c(p-L))^2}\right\}, \beta\le\min\left\{\frac{1}{36}, \frac{(p-L)^2}{384p(p+L)^2}\right\}.
\end{align}
Then, the following holds:
\begin{itemize}
\item
(One-block case) For any integer $T>0$,
if we
further let $\beta<1/\sqrt{T}$, then there exists a $t\in\{1, 2,\cdots, T\}$ such that $(x^{t+1}, y^{t+1})$ is a $\mathcal{O}(T^{-1/4})$-stationary solution. This means we can obtain an $\epsilon$-stationary solution within $\mathcal{O}(\epsilon^{-4})$ iterations.
\item
(Multi-block case) If we replace the condition of $\alpha$ in Algorithm \ref{Alg3} by
\begin{equation}\label{condition}
  \alpha\le\min\left\{\frac{1}{11L}, \frac{c^2(p-L)^2}{4L(1+c(p+L)N^{3/2}+c(p-L))^2}\right\}
\end{equation}
and further require $\beta\le \epsilon^2$,
then we can obtain an $\epsilon$-stationary solution within $\mathcal{O}(\epsilon^{-4})$ iterations of Algorithm \ref{Alg3}.
\end{itemize}
\end{theorem}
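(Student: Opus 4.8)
The plan is to exploit the one structural consequence of the smoothing: since $\nabla_x f$ is $L$-Lipschitz and $p>3L>L$, the function $K(x,z;y)=f(x,y)+\frac{p}{2}\|x-z\|^2$ is strongly convex in $x$ with modulus $p-L$ while remaining concave in $y$. Thus for each fixed $z$ the inner problem $\min_x\max_{y\in Y}K(x,z;y)$ is a strongly-convex--concave saddle problem with a unique primal solution $x^*(z)$ and value $\phi(z):=\min_x\max_{y\in Y}K(x,z;y)$. The map $\phi$ is smooth with $\nabla\phi(z)=p(z-x^*(z))$, and the update $z^{t+1}=z^t+\beta(x^{t+1}-z^t)$ is, up to the tracking error $\|x^{t+1}-x^*(z^t)\|$, an inexact gradient step of stepsize $\beta/p$ on $\phi$. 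Minimizing $\phi$ is equivalent to the original problem because $\phi(z)=\psi(z)$ whenever $z=x^*(z)$. Accordingly I would build a Lyapunov function combining the outer envelope value $\phi(z^t)$, the dual suboptimality $\max_{y\in Y}K(x^t,z^t;y)-K(x^t,z^t;y^t)$, and a primal tracking term, and show it decreases per iteration by an amount that controls a stationarity residual.

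First I would prove three one-step estimates. (i) Primal descent: using $(p-L)$-strong convexity in $x$ and $c<1/(p+L)$, the projected gradient step contracts toward $\arg\min_x K(\cdot,z^t;y^t)$ and yields $K(x^{t+1},z^t;y^t)\le K(x^t,z^t;y^t)-\gamma\|x^{t+1}-x^t\|^2$ for some $\gamma\sim 1/c$. (ii) Dual ascent: concavity in $y$, $L$-smoothness, and the stepsize $\alpha$ give progress toward $\max_y K(x^{t+1},z^t;y)$ and a quantitative bound on how much the single ascent step reduces the dual gap, modulo the perturbation caused by $x^t\to x^{t+1}$. (iii) Envelope descent: by $L_\phi$-smoothness of $\phi$ (with $L_\phi$ depending on $p,L$) and $z^{t+1}-z^t=\beta(x^{t+1}-z^t)$, I would obtain $\phi(z^{t+1})\le\phi(z^t)-\tfrac{p\beta}{2}\|x^{t+1}-z^t\|^2+(\text{error})$, where the error is controlled by the primal/dual suboptimalities and the $\mathcal{O}(\beta^2)$ second-order term.

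Next I would assemble these into a single potential $\Phi^t$, choosing its coefficients and the parameters $p,c,\alpha,\beta$ to meet the constraints in \eqref{eqn: parameters for Alg 2 general case} so that the perturbation terms from each estimate are absorbed and $\Phi^t-\Phi^{t+1}\ge\delta_1\|x^{t+1}-x^t\|^2+\delta_2\|y^{t+1}-y^t\|^2+\delta_3\,\beta\|x^{t+1}-z^t\|^2$ holds. Since $\Phi^t$ is bounded below (via Assumption \ref{basic-ass}(3)), telescoping over $t=1,\dots,T$ bounds the corresponding sums by an $\mathcal{O}(1)$ constant. The translation to stationarity is through the projection optimality conditions: the residuals $u,v$ of Definition \ref{def: eps stationary solution} at $(x^{t+1},y^{t+1})$ satisfy $\|u\|\lesssim\|x^{t+1}-x^t\|+\|y^{t+1}-y^t\|+p\|x^t-z^t\|$ and $\|v\|\lesssim\|y^{t+1}-y^t\|$, and since $\|x^t-z^t\|\le\|x^{t+1}-x^t\|+\beta^{-1}\|z^{t+1}-z^t\|=\|x^{t+1}-x^t\|+\|x^{t+1}-z^t\|$, everything reduces to the three telescoped quantities. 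The slowest is the envelope term: $\sum_t\beta\|x^{t+1}-z^t\|^2=\mathcal{O}(1)$ gives $\min_t\|x^{t+1}-z^t\|^2=\mathcal{O}(1/(\beta T))$, so with $\beta<1/\sqrt{T}$ we get $\|x^{t+1}-z^t\|=\mathcal{O}(T^{-1/4})$, which dominates the $\mathcal{O}(T^{-1/2})$ rate of the other two and yields an $\mathcal{O}(T^{-1/4})$-stationary point, i.e. the $\mathcal{O}(\epsilon^{-4})$ complexity. The multi-block case follows the same template after replacing the primal descent step by a block Gauss--Seidel analysis; the extra cross-block coupling among the $N$ sequential updates is exactly what forces the $N^{3/2}$ factor in the tighter stepsize condition \eqref{condition}, while the requirement $\beta\le\epsilon^2$ plays the role of $\beta<1/\sqrt{T}$.

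I expect the main obstacle to be the dual-ascent estimate and its coupling to the moving inner problem. Because $f$ is only concave (not strongly concave) in $y$, a single ascent step cannot by itself drive the dual gap to zero, and the maximizer $y^*(z)$ drifts as $z$ changes. The resolution is that small $\beta$ keeps $z^t$---and hence the inner saddle problem---quasi-static, so the per-step drift is $\mathcal{O}(\beta)$ and the ascent step can track the maximizer; making this precise requires carefully bounding the change of $\max_y K(x^t,z^t;y)$ along the iterates and balancing it against the primal and envelope decrements inside $\Phi^t$. Getting the bookkeeping right so that every perturbation term is dominated is the crux, and it is precisely this balance that pins down the parameter ranges and the $\mathcal{O}(\epsilon^{-4})$ rate.
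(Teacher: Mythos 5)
Your architecture matches the paper's in outline (a potential combining primal value, dual suboptimality, and the Moreau-type envelope $P(z)$ with $\nabla P(z)=p(z-x^*(z))$; three one-step descent/ascent estimates; telescoping; balancing via $\beta\sim 1/\sqrt{T}$), but there is a genuine gap at the crux. You assert that after tuning coefficients and parameters, \emph{every} perturbation term can be absorbed so that the unconditional sufficient decrease $\Phi^t-\Phi^{t+1}\ge\delta_1\|x^{t+1}-x^t\|^2+\delta_2\|y^{t+1}-y^t\|^2+\delta_3\beta\|x^{t+1}-z^t\|^2$ holds at every iteration. For merely concave (not strongly concave) $f(x,\cdot)$ this is false, and the paper's Proposition \ref{basic-estimate} makes the obstruction explicit: the best achievable one-step estimate carries the irreducible negative term \eqref{eqn: negative term}, namely $-24p\beta\|x^*(z^t)-x(y^t_+(z^t),z^t)\|^2$. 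The quantity $\|x^*(z^t)-x(y^t_+(z^t),z^t)\|$ measures how far the one-step-ascended dual iterate is from the dual optimal set $Y(z^t)$ (mapped through $x(\cdot,z^t)$), and without strong concavity in $y$ a tiny one-step residual $\|y^t-y^t_+(z^t)\|$ does \emph{not} imply $y^t_+(z^t)$ is near $Y(z^t)$: the only available control is the nonhomogeneous bound of Lemma \ref{weak error bound}, $\alpha(p-L)\|x^*(z^t)-x(y^t_+(z^t),z^t)\|^2\lesssim\|y^t-y^t_+(z^t)\|\cdot D(Y)$, which is \emph{linear} in the dual residual. Since the positive term available for absorption is quadratic, $\frac{1}{8\alpha}\|y^t-y^t_+(z^t)\|^2$, the negative term dominates whenever the dual residual is small but $y^t_+(z^t)$ is far from $Y(z^t)$, and the potential can actually increase at such iterations, no matter how $p,c,\alpha,\beta$ are tuned. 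Your proposed fix---that small $\beta$ keeps $z^t$ quasi-static so the maximizer drifts by $\mathcal{O}(\beta)$ per step---controls the motion of $Y(z^t)$ across iterations, but not the gap between one-step ascent progress and dual optimality at a \emph{fixed} $z^t$, which is where the failure occurs.

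The paper's actual mechanism is a dichotomy rather than unconditional decrease. At each $t$, either the negative term is dominated by half of the positive terms (their case \eqref{case2}), in which case the sufficient decrease \eqref{suff-decrease} holds and telescoping plus Lemma \ref{trivial-appe} yields an $\mathcal{O}(\sqrt{(\phi^0-\underline{f})/(T\beta)})$-stationary point among $T$ iterations; or the negative term dominates at some $t$ (case \eqref{case1-appe}), and then combining the dominance inequality with the nonhomogeneous Lemma \ref{weak error bound} forces $\|y^t-y^t_+(z^t)\|\le\lambda_1\beta$, $\|x^{t+1}-z^t\|^2=\mathcal{O}(\beta)$, and $\|x^t-x^{t+1}\|^2=\mathcal{O}(\beta^2)$---i.e., that iterate is \emph{already} $\mathcal{O}(\sqrt{\beta})$-stationary, not because the potential decreased but because dominance of the error term certifies smallness of all residuals. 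Choosing $\beta=1/\sqrt{T}$ equalizes the two cases at $\mathcal{O}(T^{-1/4})$, giving the $\mathcal{O}(\epsilon^{-4})$ complexity; this case analysis, absent from your proposal, is also exactly why the compactness of $Y$ (through $D(Y)$) enters the general theorem and why the improved $\mathcal{O}(\epsilon^{-2})$ rate for problem \eqref{minmax2} requires the separate homogeneous error bound of Lemma \ref{dual error bound}. Your remaining steps (stationarity translation via projection optimality as in Lemma \ref{trivial1}, the $N^{3/2}$ factor from the Gauss--Seidel error bound in the multi-block case) are consistent with the paper.
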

%
\noindent{\bf Remark.} The reference \cite{lu2019hybrid} derived the same iteration complexity of $\mathcal{O}(\epsilon^{-4})$ under the additional compactness assumption on $X$. This assumption may not be satisfied for some applications where $X$ can the entire space.

\subsection{The Iteration Complexity of Achieving $\epsilon$-Optimization-Stationary Point}
In this subsection, we consider the optimization stationary point defined in \cite{lin2019gradient}.
\begin{definition}\label{def: optimization stationary point}
A pair $(x, y)$ is a $\epsilon$-optimization-stationary point ($\epsilon > 0$) if $$\|z - x^*(z)\| \leq \epsilon$$ where 
$$x^*(z) = \arg\min_{x\in X}\max_{y\in Y}K(x, z; y).$$
\end{definition}

\begin{theorem}\label{general-for-optimization-stationary-point}
Consider solving problem \eqref{minimax1} by Algorithm \ref{Alg2} (or Algorithm \ref{Alg3}).
Suppose Assumption~\ref{basic-ass} holds, and we choose the algorithm parameters as the same as Theorem \ref{general}.
Then, the complexity to achieve $\epsilon$-stationary point is also  $\mathcal{O}(\epsilon^{-4})$.
\end{theorem}

\noindent{\bf Remark.} To the best of our knowledge, Algorithm \ref{Alg2} (or Algorithm \ref{Alg3}) is the only single-loop algorithm that could achieve this $\mathcal{O}(\epsilon^{-4})$ complexity (Theorem \ref{general-for-optimization-stationary-point}). The proof is given in Appendix \ref{Appendix: new theorem}.  

\subsection{Convergence Results for Minimizing the Point-Wise Maximum of Finite Functions}
Now we state the improved iteration complexity results for the special min-max problem \eqref{minmax2}. We claim that our algorithms (Algorithm \ref{Alg2} and Algorithm \ref{Alg3}) can achieve the optimal order of iteration complexity of $\mathcal{O}(\epsilon^{-2})$ in this case.

For any stationary solution of \eqref{minmax2} denoted as $(x^*, y^*)$, the following KKT conditions hold:
\begin{eqnarray}\label{KKTfororiginal}
&&\nabla F(x^*)y^*=0,\label{stationary}\\
&&\sum_{i=1}^my_i^*=1,\\
&&y_i^*\ge 0,\forall i\in [m]\\
&&\mu-\nu_i=f_i(x^*), \forall i\in[m],\\
&&\nu_i\ge 0, \nu_iy_i^*=0, \forall i\in[m],
\end{eqnarray}
where $\nabla F(x) $ denotes the Jacobian matrix of $F$ at $x$, while $\mu$, $\nu$ are the multipliers for the equality constraint $\sum_{i=1}^my_i=1$ and the inequality constraint $y_i\ge 0$ respectively.


At any stationary solution $(x^*, y^*)$, only the functions $f_i(x^*)$ for any index $i$ with $y^*_i>0$ contribute to the objective function $\sum_{i=1}^Ny_i^*f_i(x^*)$ and they correspond to the worst cases in the robust learning task. In other words, any function $f_i(\cdot)$ with $y^*_i>0$ at $(x^*, y^*)$ contains important information of the solution.
We denote a set $\mathcal{I}_+(y^*)$ to represent the set of indices for which $y_i^*>0$. We will make a mild assumption on this set.
\begin{assumption}\label{assumption for special}
For  any $(x^*, y^*)$ satisfying  \eqref{KKTfororiginal}, we have $\nu_i>0, \forall i\notin\mathcal{I}_+(y^*)$.
\end{assumption}
\textbf{Remark.}
The assumption is called ``strict complementarity'', a common assumption in the field of variation inequality \cite{harker1990finite, facchinei2007finite} which is closely related to the study of min-max problems. This assumption is used in many other optimization papers \cite{ forsgren2002interior, carbonetto2009interior, liang2014local, namkoong2016stochastic, lu2019snap}.
Strict complementarity is generically true (i.e. holds with probability 1) if there is a linear term in the objective function and the data is from a continuous distribution (similar to \cite{zhang2020proximal, lu2019snap}). 
Moreover, we will show that we can prove Theorem \ref{discrete} using a weaker regularity assumption rather than the strict complementarity assumption:
\begin{assumption}\label{regularity in main text}
For any $(x^*, y^*)\in W^*$, the matrix $M(x^*)$ is of full column rank, where

$$M(x^*)=\begin{Bmatrix}
J_{\mathcal{T}(x^*)}&\mathbf{1}
\end{Bmatrix}.$$
\end{assumption}
We say that Assumption \ref{regularity in main text} is weaker since the strict complementarity assumption (Assumption \ref{assumption for special}) can imply Assumption \ref{regularity in main text} according to Lemma \ref{uniqueness} in the appendix.
In the appendix, we will see that Assumption \ref{regularity in main text} holds with probability $1$ for a robust regression problem with a square loss (see Proposition \ref{generic}).

We also make the following common ``bounded level set'' assumption.
\begin{assumption}\label{boundedness}
  The set $\{x\mid \psi(x)\le R\}$ is bounded for any $R>0$. Here $\psi(x)=\max_{y\in Y}f(x, y)$.
\end{assumption}

\noindent{\bf Remark.} This bounded-level-set assumption is to ensure the iterates would stay bounded. Actually, assuming the iterates $\{z^t\}$ are bounded will be enough for our proof.
The bounded level set assumption, a.k.a. coerciveness assumption, is widely used in many papers  \cite{zeng2018nonconvex, cannelli2019asynchronous, wang2019global}.
Bounded-iterates-assumption itself is also common in optimization
\cite{xu2015block, defossez2020convergence, carbonetto2009interior}.
In practice, people usually add a regularizer to the objective function to make the level set and the iterates bounded (see \cite{xie2020maximum} for a neural network example).

\begin{theorem}\label{discrete}
Consider solving problem \ref{minmax2} by Algorithm \ref{Alg2} or Algorithm \ref{Alg3}. Suppose that Assumption \ref{basic-ass}, \ref{assumption for special} holds and either Assumption \ref{boundedness} holds or assume $\{z^t
\}$ is bounded. Then there exist constants $\beta'$ and $\beta''$ (independent of $\epsilon$ and $T$) such that the following holds:
\begin{enumerate}
    \item (One-block case) If we choose the parameters in Algorithm \ref{Alg2} as in \eqref{eqn: parameters for Alg 2 general case} and we further let
$\beta<\beta'$ , then
        \begin{enumerate}
        \item Every limit point of $(x^t, y^t)$ is a solution of \eqref{minmax2}.
        \item The iteration complexity of Algorithm   \ref{Alg2} to obtain an $\epsilon$-stationary solution is $\mathcal{O}(1/\epsilon^2)$.
        \end{enumerate}
    \item (Multi-block case) Consider using Algorithm \ref{Alg3} to solve Problem \ref{minmax2}. If we replace the condition for $\alpha$ in \eqref{eqn: parameters for Alg 2 general case} by \eqref{condition}
and require $\beta$ satisfying  $\beta<\epsilon^2$ and $\beta<\beta''$,
    then we have the same results as in the one-block case.
\end{enumerate}
\end{theorem}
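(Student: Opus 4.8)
The plan is to recast Algorithm \ref{Alg2} as an inexact gradient method on the Moreau envelope of $\psi$ and to drive everything with a single Lyapunov function. Since each $f_i$ is $L$-smooth, $\psi(x)=\max_{y\in Y}f(x,y)$ is $L$-weakly convex, so with $p>3L$ the map $K(\cdot,z;y)$ is $(p-L)$-strongly convex and the envelope $d(z):=\min_x\max_{y\in Y}K(x,z;y)=\min_x\{\psi(x)+\tfrac p2\|x-z\|^2\}$ is well defined and differentiable, with $\nabla d(z)=p(z-x^*(z))$ where $x^*(z)=\argmin_x\{\psi(x)+\tfrac p2\|x-z\|^2\}$. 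The key observation is that the averaging step $z^{t+1}=z^t+\beta(x^{t+1}-z^t)$ is exactly a gradient-descent step on $d$ of size $\beta/p$, up to the error $x^{t+1}-x^*(z^t)$ incurred by running only one GDA step on the inner problem. I would then introduce a potential of the form $\Phi^t=K(x^t,z^t;y^t)+a\,(d(z^t)-\underline f)+b\,\|x^t-\hat x^t\|^2$, with $\hat x^t$ the inner primal minimizer at $(z^t,y^t)$ and $a,b>0$ fixed by the parameter ranges in \eqref{eqn: parameters for Alg 2 general case}.

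First I would prove a descent lemma $\Phi^t-\Phi^{t+1}\ge\gamma_1\|x^{t+1}-x^t\|^2+\gamma_2\|y^{t+1}-y^t\|^2+\gamma_3\|z^{t+1}-z^t\|^2$ with positive $\gamma_i$: the strong convexity of $K$ in $x$ together with $c<1/(p+L)$ controls the primal descent, the concavity of $K$ in $y$ with $\alpha<1/(11L)$ bounds the dual-ascent error, and smallness of $\beta$ ensures the drift of $z$ (hence of $d(z^t)$ through $\nabla d$) is dominated. Summing, and using Assumption \ref{basic-ass}(3) with Assumption \ref{boundedness} or the assumed boundedness of $\{z^t\}$ to keep all iterates in a compact set, gives $\sum_t(\|x^{t+1}-x^t\|^2+\|y^{t+1}-y^t\|^2+\|z^{t+1}-z^t\|^2)<\infty$, so all three movements tend to $0$. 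From the projection optimality of the $x$- and $y$-updates I would then bound the residuals of Definition \ref{def: eps stationary solution} at $(x^{t+1},y^{t+1})$ by a constant times $\|x^{t+1}-x^t\|+\|y^{t+1}-y^t\|+\|x^{t+1}-z^t\|$, where $\|x^{t+1}-z^t\|=\beta^{-1}\|z^{t+1}-z^t\|$ exactly. Hence any limit point has vanishing residual and satisfies the KKT system \eqref{KKTfororiginal}, proving claim (1a).

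The whole difficulty sits in the rate. The residual contains $p\,\|x^{t+1}-z^t\|=\beta^{-1}p\,\|z^{t+1}-z^t\|$, so naive telescoping would demand $\|z^{t+1}-z^t\|\lesssim\beta\epsilon$, i.e. $\beta\sim1/\sqrt T$, which is precisely the source of the $\mathcal O(\epsilon^{-4})$ rate of Theorem \ref{general}. To remove this factor I would establish a local error bound tied to the polyhedral structure of \eqref{minmax2}: Assumption \ref{assumption for special} (strict complementarity), which by Lemma \ref{uniqueness} forces $M(x^*)$ to have full column rank (Assumption \ref{regularity in main text}), makes the active set $\mathcal I_+(y^*)$ locally identified and the inner simplex LP sharp, yielding constants $\tau,\delta>0$ with $\dist\big((x,z,y),W^*\big)\le\tau\,\|x-z\|$ and a quadratic-growth inequality for $d$ on a $\delta$-neighborhood of $W^*$. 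This sharpness lets a single dual-ascent step track the optimal face geometrically, so the inner inexactness no longer degrades the envelope descent and $\beta$ may be held at a constant $\beta'$ independent of $\epsilon$. With constant $\beta$, Part A gives $\min_{t\le T}\|z^{t+1}-z^t\|^2=\mathcal O(1/T)$ while $\|x^{t+1}-z^t\|$ is itself controlled through $\|\nabla d(z^t)\|$ and the error bound, so $\min_{t\le T}(\text{residual})^2=\mathcal O(1/T)$, i.e. an $\epsilon$-stationary solution within $\mathcal O(\epsilon^{-2})$ iterations, proving (1b).

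For the multi-block Algorithm \ref{Alg3} the only change is in the primal descent lemma: the Gauss--Seidel sweep over the $N$ blocks introduces cross-block coupling errors proportional to $\sum_i\|x_i^{t+1}-x_i^t\|$, which are absorbed by tightening the step to \eqref{condition} (the $N^{3/2}$ factor), after which the preceding arguments carry over with a constant threshold $\beta''$, and $\beta<\epsilon^2$ guarantees the $z$-drift stays within the error-bound radius. I expect the main obstacle to be the error-bound step above: proving the quadratic-growth property of $d$ from strict complementarity and verifying that one projected-gradient dual step keeps the iterate inside the identification region, so that $\beta$ need not shrink with $\epsilon$. This is exactly what separates the structured $\mathcal O(\epsilon^{-2})$ result from the generic $\mathcal O(\epsilon^{-4})$ one.
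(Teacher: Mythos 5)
Your Part~A breaks at the descent lemma, and the failure is structural, not a matter of constants: the Lyapunov function is wrong. Consider an iterate with $x^t=x(y^t,z^t)=z^t$ but $y^t$ suboptimal (exactly the oscillation regime the smoothing must handle). Then $x^{t+1}=x^t$ and $z^{t+1}=z^t$, while the $y$-step moves; by the projection property and $L$-smoothness, $K(x^t,z^t;y^{t+1})-K(x^t,z^t;y^t)\ge(\tfrac1\alpha-\tfrac L2)\|y^{t+1}-y^t\|^2>0$, your envelope term $a\,(d(z^t)-\underline f)$ is unchanged, and $b\|x^t-x(y^{t+1},z^t)\|^2\ge 0=b\|x^t-x(y^t,z^t)\|^2$. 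So every term of your $\Phi$ is nondecreasing and the first strictly increases, i.e.\ $\Phi^{t+1}>\Phi^t$, contradicting the claimed $\Phi^t-\Phi^{t+1}\ge\gamma_2\|y^{t+1}-y^t\|^2$ with $\gamma_2>0$. The structural reason: since $\tfrac{p-L}{2}\|x-x(y,z)\|^2\le K(x,z;y)-d(y,z)\le\tfrac{p+L}{2}\|x-x(y,z)\|^2$, your ansatz is essentially a combination of $K$, $K-d(y,z)$ and $P$ in which the weight on $-d(y,z)$ can never exceed the weight on $K$; but converting dual ascent into potential decrease requires the $-d$ weight to strictly dominate the $K$ weight. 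This is precisely why the paper takes $\phi^t=K-2d+2P$ in \eqref{potential}: the factor $2$ over-cancels the $\approx\tfrac1\alpha\|y^{t+1}-y^t\|^2$ increase of $K$ under the dual step, and the $+2P$ restores boundedness from below ($\phi^t\ge\underline f$ via $K\ge d$ and $P\ge d$, Lemma \ref{lower-bounded}). Moreover, your clean three-term descent inequality conceals the actual crux: with the correct potential, the one-step estimate (Proposition \ref{basic-estimate}) unavoidably carries the negative term $-24p\beta\|x^*(z^t)-x(y_+^t(z^t),z^t)\|^2$, and absorbing that term is the entire content of the theorem.

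The second gap is the error-bound step. Neither $\dist\bigl((x,z,y),W^*\bigr)\le\tau\|x-z\|$ nor quadratic growth of the envelope follows from Assumptions \ref{basic-ass} and \ref{assumption for special}: for nonconvex $f_i$, bounding distance-to-$W^*$ by the prox residual is a sharpness/PL-type condition on $P$ that strict complementarity of the inner simplex problem does not supply, and the paper never proves (or needs) any rate of convergence of $z^t$ to $W^*$. What the structure of \eqref{minmax2} actually yields --- and what matches the negative term above exactly --- is the dual error bound of Lemma \ref{dual error bound}: for $\|z\|\le R$ and small residuals, $\|x(y_+(z),z)-x^*(z)\|\le\sigma_5\|y-y_+(z)\|$. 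Its proof is where your identification intuition must be made precise, via a specific chain: active-set identification $\mathcal A[y_+(z)]=\mathcal A[y(z)]$ for small residuals (Lemma \ref{same active set}); a uniform lower bound on the smallest singular value of $M$ near $W^*$ (Lemma \ref{nonsingular}), which needs $\|z^t\|\le R$ maintained by an induction through $P(z^{t+1})\le\phi^{t+1}\le\phi^0$ and Assumption \ref{boundedness}; subtraction of the two KKT linear systems for $x(y_+(z),z)$ and $x^*(z)$ to obtain $\dist(y_+(z),Y(z))\le\lambda\|x^*(z)-x(y_+(z),z)\|$ (Lemma \ref{inverse}); and substitution into the nonhomogeneous bound of Lemma \ref{weak error bound} to homogenize it. With that bound and a case split on whether the residuals dominate the negative term, a constant $\beta<\beta'$ (e.g.\ $\beta<1/(384p\alpha\sigma_5^2)$ and small enough that the small-residual case lands in the $\delta$-ball) absorbs $-24p\beta\|x^*(z^t)-x(y_+^t(z^t),z^t)\|^2$ into $\tfrac{1}{16\alpha}\|y^t-y_+^t(z^t)\|^2$, giving \eqref{suff-decrease} for all $t$ and hence, by Lemma \ref{trivial-appe}, both the limit-point claim and the $\mathcal O(1/\epsilon^2)$ rate. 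Your final accounting (constant $\beta$ gives a $1/(T\beta)$ bound; multi-block via \eqref{condition}) is consistent with the paper, but it only becomes available after both repairs above.
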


\section{Proof Sketch}\label{sec: proof sketch}
In this section, we give a proof sketch of the main theorem on the one-block cases; the proof details will be given in the appendix.

\subsection{The Potential Function and  Basic Estimates}
To analyze the convergence of the algorithms, we construct a potential function and study its behavior along the iterations.
We first give the intuition why our algorithm works.
We define the dual function $d(\cdot)$ and the proximal function $P(\cdot)$ as
\begin{eqnarray*}
d(y, z)= \min_{x\in X}K(x, z; y),\quad
P(z)= \min_{x\in X}\{\max_{y\in Y}K(x, z; y)\}.
\end{eqnarray*}
We also let
\begin{eqnarray*}
x(y, z)&=&\arg\min_{x\in X}K(x, z; y),\\
x^*(z)&=&\arg\min_{x\in X}\max_{y\in Y}K(x, z; y),\\
y_+^t(z^t)&=&P_Y(y^t+\alpha\nabla_yK(x(y^t, z^t), z^t; y^t)).
\end{eqnarray*}
Notice that by Danskin's Theorem, we have $\nabla_yd(y, z)=\nabla_yK(x(y, z), z;y)$ and $\nabla_zP(z)=p(z-x^*(z))$.
Recall in Algorithm \ref{Alg2}, the update for $x^t, y^t$ and $z^t$ can be respectively viewed as a primal descent for the function $K(x^t, z^t; y^t)$, approximating dual ascent to the dual function  $d(y^t, z^t)$ and approximating proximal descent to the proximal function $P(z^t)$.
We define a potential function as follows:
\begin{equation}\label{potential}
\phi^t=\phi(x^t, y^t, z^t)=K(x^t, z^t; y^t)-2d(y^t, z^t)+2P(z^t),
\end{equation}
which is a linear combination of the primal function $K(\cdot)$, the dual function $d(\cdot)$ and the proximal function $P(\cdot)$.
We hope the potential function decreases after each iteration and is bounded from below. In fact, it is easy to prove that $\phi^t\ge \underline{f}$ for any $t$ (see appendix), but it is harder to prove the decrease of $\phi^t$. Since the ascent for dual and the descent for proximal is approximate, an error term occurs when estimating the decrease of the potential function. Hence, certain error bounds are needed. 

Using some primal error bounds, we have the following basic descent estimate.

\begin{proposition}\label{basic-estimate}
Suppose  the parameters of Algorithm  \ref{Alg2} satisfy \eqref{eqn: parameters for Alg 2 general case}, then
\begin{eqnarray}\label{basic estimate}
\phi^t-\phi^{t+1}
&\ge&\frac{1}{8c}\|x^t-x^{t+1}\|^2 +\frac{1}{8\alpha}\|y^t-y^t_+(z^t)\|^2+\frac{p\beta}{8}\|z^t-x^{t+1}\|^2\\
\label{eqn: negative term}&&-24p\beta\|x^*(z^{t})-x(y^t_+(z^t), z^{t})\|^2.
\end{eqnarray}
\end{proposition}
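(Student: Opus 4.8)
The plan is to track the change of the potential through the three elementary updates of one iteration, writing
$$\phi^t-\phi^{t+1}=\big[\phi(x^t,y^t,z^t)-\phi(x^{t+1},y^t,z^t)\big]+\big[\phi(x^{t+1},y^t,z^t)-\phi(x^{t+1},y^{t+1},z^t)\big]+\big[\phi(x^{t+1},y^{t+1},z^t)-\phi(x^{t+1},y^{t+1},z^{t+1})\big],$$
and lower-bounding each bracket separately: the first involves only $K$ (since $d,P$ depend on $(y,z)$), the second involves $K$ and $d$, and the third involves $K$, $d$ and $P$. The factor $2$ attached to $d$ and $P$ is what lets the dual gain dominate the primal $K$-loss and what lets the proximal term control the residual, so I keep this weighting in mind throughout. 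For the primal bracket I would invoke the prox-gradient descent lemma: since $\nabla_x K(\cdot,z^t;y^t)$ is $(p+L)$-Lipschitz and $c<1/(p+L)$, the $x$-step gives $K(x^t,z^t;y^t)-K(x^{t+1},z^t;y^t)\ge(\tfrac1c-\tfrac{p+L}2)\|x^t-x^{t+1}\|^2\ge\tfrac1{2c}\|x^t-x^{t+1}\|^2$, exceeding the target $\tfrac1{8c}$ and leaving slack $\tfrac3{8c}\|x^t-x^{t+1}\|^2$ to absorb the cross-terms produced below. The workhorse auxiliary fact, used repeatedly, is a one-step error bound: because $K(\cdot,z^t;y^t)$ is $(p-L)$-strongly convex, the prox-gradient iterate satisfies $\|x^{t+1}-x(y^t,z^t)\|\le C\|x^t-x^{t+1}\|$ for an explicit $C=C(c,p,L)$.

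The dual bracket is the more delicate part. Writing $g=\nabla_y d(y^t,z^t)=\nabla_y K(x(y^t,z^t),z^t;y^t)$ (Danskin) and $g'=\nabla_y K(x^{t+1},z^t;y^t)$, I would use concavity of $K(x^{t+1},z^t;\cdot)$ to bound the $K$-loss below by $-\langle g',y^{t+1}-y^t\rangle$, concavity-plus-smoothness of $d(\cdot,z^t)$ (with constant $L_d\le L+L^2/(p-L)$) to bound the dual gain $2[d(y^{t+1},z^t)-d(y^t,z^t)]$ below by $2\langle g,y^{t+1}-y^t\rangle-L_d\|y^{t+1}-y^t\|^2$, and the projection optimality of the $y$-step, $\langle g',y^{t+1}-y^t\rangle\ge\tfrac1\alpha\|y^{t+1}-y^t\|^2$. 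Since $\|g-g'\|\le L\|x(y^t,z^t)-x^{t+1}\|$, a Young splitting produces a clean $\sim\tfrac1\alpha\|y^{t+1}-y^t\|^2$ minus an $O(\alpha L^2)\|x^{t+1}-x(y^t,z^t)\|^2$ error; using $\|y^{t+1}-y^t_+(z^t)\|\le\alpha L\|x^{t+1}-x(y^t,z^t)\|$ (nonexpansiveness of $P_Y$) I then convert the actual dual step into the idealized $\|y^t-y^t_+(z^t)\|^2$ required on the right-hand side. Both residual errors are $O(\|x^{t+1}-x(y^t,z^t)\|^2)$ and hence, via the error bound above, get absorbed into the primal slack once $\alpha$ is as small as in \eqref{eqn: parameters for Alg 2 general case}.

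For the proximal bracket I would first observe that $P$ is exactly the Moreau envelope of $\psi(x)=\max_{y}f(x,y)$, so $\nabla_z P(z)=p(z-x^*(z))$, and both $P$ and $d(y^{t+1},\cdot)$ (through $\nabla_z d=p(z-x(y^{t+1},\cdot))$) have $O(p)$-Lipschitz gradients. The update $z^{t+1}-z^t=\beta(x^{t+1}-z^t)$ makes the $K$-part exact, equal to $\tfrac{p\beta(2-\beta)}2\|x^{t+1}-z^t\|^2\ge\tfrac{p\beta}2\|x^{t+1}-z^t\|^2$, while the smoothness inequalities turn $-2\langle\nabla_z P(z^t),z^{t+1}-z^t\rangle$ and the analogous $d$ term into the cross term $2p\beta\langle x^*(z^t)-z^t,x^{t+1}-z^t\rangle$ plus $O(\beta^2)$ remainders. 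Rewriting $\langle x^*(z^t)-z^t,x^{t+1}-z^t\rangle=\|x^{t+1}-z^t\|^2+\langle x^*(z^t)-x^{t+1},x^{t+1}-z^t\rangle$ and applying Young extracts $\tfrac{p\beta}8\|x^{t+1}-z^t\|^2$ and leaves a negative multiple of $\|x^{t+1}-x^*(z^t)\|^2$; splitting $\|x^{t+1}-x^*(z^t)\|\le\|x^{t+1}-x(y^t_+(z^t),z^t)\|+\|x(y^t_+(z^t),z^t)-x^*(z^t)\|$ and bounding the first summand via the primal error bound and the Lipschitzness of $y\mapsto x(y,z^t)$ leaves precisely the irreducible residual $-24p\beta\|x^*(z^t)-x(y^t_+(z^t),z^t)\|^2$, while the $O(\beta^2)$ remainders are killed by the smallness of $\beta$ in \eqref{eqn: parameters for Alg 2 general case}.

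\textbf{Main obstacle.} I anticipate the crux is the error-term bookkeeping rather than any single inequality: every bracket is contaminated by the algorithm's use of the inexact iterate $x^{t+1}$ in place of the exact minimizers $x(y^t,z^t)$, $x(y^t_+(z^t),z^t)$ and $x^*(z^t)$, so the proof hinges on the strong-convexity error bound and on chaining the reference points $x(y^t,z^t)\to x(y^t_+(z^t),z^t)\to x^*(z^t)$ while ensuring the accumulated constants still fit under the slacks $\tfrac3{8c}$ and the $\beta$-order margin with the exact numbers $\tfrac18$ and $24$. The parameter window in \eqref{eqn: parameters for Alg 2 general case} is presumably reverse-engineered precisely to make this accounting close.
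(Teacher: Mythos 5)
Your proposal is correct, and its skeleton coincides with the paper's proof: the same split of $\phi^t-\phi^{t+1}$ into primal, dual and proximal contributions (the paper telescopes by component of $\phi$ via its Primal Descent, Dual Ascent and Proximal Descent lemmas, while you telescope by update step, which is only a regrouping of the same terms), the same projection-optimality inequality $\langle\nabla_yK(x^{t+1},z^t;y^t),y^{t+1}-y^t\rangle\ge\frac1\alpha\|y^{t+1}-y^t\|^2$, the same Danskin plus $L_d$-smoothness treatment of $d(\cdot,z^t)$, the same Lipschitz-argmin error bounds $\sigma_1,\sigma_2$ and one-step bound $\|x^{t+1}-x(y^t,z^t)\|\le\sigma_3\|x^t-x^{t+1}\|$, the same $\kappa=\alpha L\sigma_3$ conversion from $y^{t+1}$ to $y^t_+(z^t)$, and the same chaining of reference points to anchor the negative term at $(y^t_+(z^t),z^t)$. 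The one genuine divergence is the proximal bracket: you differentiate $P$ as the Moreau envelope of the $L$-weakly convex $\psi+\mathbf{1}_X$ (so $\nabla P(z)=p(z-x^*(z))$ is $p(1+\sigma_1)$-Lipschitz, justified by \eqref{eb11}) and expand both $P$ and $d(y^{t+1},\cdot)$ in $z$ to first order, paying $O(p\beta^2)\|x^{t+1}-z^t\|^2$ remainders absorbed by the smallness of $\beta$; the paper never invokes smoothness of $P$, instead sandwiching $P(z^{t+1})-P(z^t)\le d(y(z^{t+1}),z^{t+1})-d(y(z^{t+1}),z^t)$ via the minimax equality and expanding the quadratic in $z$ exactly, with the maximizer $y(z^{t+1})$ later eliminated through $x(y(z),z)=x^*(z)$ (Lemma \ref{eqvl}). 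Your route avoids introducing $y(z^{t+1})$ and lands the negative residual directly at $z^t$-anchored points, bypassing the paper's four-term splitting in \eqref{spl2} through $z^{t+1}$ (with its $8\sigma_1^2\|z^t-z^{t+1}\|^2$ overhead), at the price of having to establish envelope smoothness, which the paper's \eqref{eb1} and \eqref{eb11} do supply. Two minor points: your $L_d\le L+L^2/(p-L)$ is in fact tighter than the paper's $L_d=L(1+\sigma_2)$ with $\sigma_2=2(p+L)/(p-L)$ (the standard variational-inequality argument gives the modulus $L/(p-L)$, whereas the paper's quadratic-inequality proof of \eqref{eb2} is looser; either constant fits under $\alpha<1/(11L)$ via $L_d\ge 5L$); and in your third bracket the positive $\|x^{t+1}-z^t\|^2$ extracted by rewriting the $P$-cross-term is exactly cancelled by the matching negative one from the $d$-cross-term, so the surviving $\frac{p\beta}{8}\|x^{t+1}-z^t\|^2$ must come from the exact $K$-decrease $\frac{p\beta(2-\beta)}{2}\|x^{t+1}-z^t\|^2$ after the Young costs --- your bookkeeping sentence blurs this, but the budget does close under \eqref{eqn: parameters for Alg 2 general case}.
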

We would like the potential function $\phi^t$ to decrease sufficiently after each iteration. Concretely speaking, we want to eliminate the negative term (\ref{eqn: negative term}) and show that the following ``sufficient-decrease'' holds for each iteration $t$:
\begin{equation}\label{suff-decrease}
\phi^t-\phi^{t+1}\ge \frac{1}{16c}\|x^t-x^{t+1}\|^2+\frac{1}{16\alpha}\|y^t-y^t_+(z^t)\|^2+\frac{p\beta}{16}\|z^t-x^{t
+1}\|^2.
\end{equation}
It is not hard to prove that if \eqref{suff-decrease} holds for $t\in\{0, 1, \cdots, T-1\}$, then there exists a $t\in\{1, 2, \cdots, T\}$ such that $(x^t, y^t)$ is a $C/\sqrt{T\beta}$-solution for some constant $C>0$.
Moreover, if \eqref{suff-decrease} holds for any $t$, then the iteration complexity is $\mathcal{O}(1/\epsilon^2)$ and we can also prove that every limit point of the iterates is a min-max solution.
Therefore by the above analysis, the most important thing is to bound the term $\|x^*(z^t)-x(y^t_+(z^t), z^t)\|^2$, which is related to the so-call ``dual error bound''.

If $\|y^t-y^t_+(z^t)\|=0$, then $y^t_+(z^t)$ is the maximizer of $d(y, z^t)$ over $y$, and thus $x^*(z^t)$ is the same as $x(y^t_+(z^t), z^{t})$. A natural question is whether we can use the term $\|y^t-y^t_+(z^t)\|$ to bound
$\|x^*(z^{t})-x(y^t_+(z^t), z^{t})\|^2$? The answer is yes, and
we have the following ``dual error bound''.

\begin{lemma}\label{dual error bound}
If Assumptions \ref{basic-ass} and \ref{assumption for special} hold for \eqref{minmax2} and there is an $R>0$ with $\|z^t\|\le R$, then there exists $\delta>0$ such that if
$$\max\{\|x^t-x^{t+1}\|, \|y^t-y^t_+(z^t)\|, \|x^{t+1}-z^t\|\}\le\delta,$$
then
$$\|x(y^t_+(z^t), z^t)-x^*(z^t)\|\le \sigma_5\|y^t-y^t_+(z^t)\|.$$
holds for  some constant $\sigma_5>0$.
\end{lemma}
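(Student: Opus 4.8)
The plan is to split the estimate into an \emph{easy primal part} and the \emph{hard dual part}. Since $p>3L$, the map $K(\cdot,z;y)=F(\cdot)^\transp y+\frac p2\|\cdot-z\|^2$ is $(p-L)$-strongly convex, so $x(y,z)$ is its unique minimizer and, comparing the variational inequalities that define $x(y_1,z)$ and $x(y_2,z)$ and using that $\nabla_x f(x,y)=\nabla F(x)y$ is $L$-Lipschitz in $x$, one obtains
\begin{equation*}
(p-L)\|x(y_1,z)-x(y_2,z)\|\le \|\nabla F(x(y_1,z))\|\,\|y_1-y_2\|\le G\,\|y_1-y_2\|,
\end{equation*}
where $G$ bounds the Jacobian on the bounded region containing all $x(y,z)$ with $y\in Y$, $\|z\|\le R$ (such a region exists because the proximal term keeps $x(y,z)$ near $z$). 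Writing $x^*(z)=x(y^*,z)$ for a dual optimizer $y^*\in Y^*(z):=\argmin_{y\in Y}\bigl(-d(y,z)\bigr)$, this reduces the lemma to a \emph{dual error bound}: it suffices to find $C>0$ with $\dist(y_+^t(z^t),Y^*(z^t))\le C\|y^t-y_+^t(z^t)\|$, since then $\|x(y_+^t,z^t)-x^*(z^t)\|\le G(p-L)^{-1}\dist(y_+^t,Y^*(z^t))$ and we may take $\sigma_5=G(p-L)^{-1}(C+1)$.

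By Danskin's theorem $\nabla_y d(y,z)=F(x(y,z))$, which is Lipschitz by the previous paragraph, so $-d(\cdot,z)$ is a convex $C^{1,1}$ function on the simplex. The core step is to show that $-d(\cdot,z)$ enjoys a \emph{quadratic growth} condition near $Y^*(z)$, uniformly for $\|z\|\le R$. I would analyze the two families of feasible directions at an optimizer $y^*$ separately. For the inactive coordinates $i\notin\mathcal I_+(y^*)$, strict complementarity (Assumption~\ref{assumption for special}) forces $\mathcal T(x^*)=\mathcal I_+(y^*)$ and $\partial_{y_i} d=f_i(x^*)=\mu-\nu_i$ with $\nu_i>0$, so shifting mass onto these coordinates decreases $d$ at a strictly linear (sharp) rate. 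For directions $\eta$ supported on the active set with $\mathbf 1^\transp\eta=0$, implicit differentiation of $\nabla_x K(x(y,z),z;y)=0$ gives $\nabla^2_{yy}d=-\nabla F^\transp H^{-1}\nabla F$ with $H=\nabla^2_{xx}K\succeq(p-L)I$, hence $\eta^\transp\nabla^2_{yy}d\,\eta=-\|H^{-1/2}\nabla F\eta\|^2$; the full column rank of $M(x^*)=[J_{\mathcal T(x^*)},\mathbf 1]$ (implied by Assumption~\ref{assumption for special} via Lemma~\ref{uniqueness}) guarantees $\nabla F\eta\ne0$ for such $\eta\ne0$, hence a uniform negative curvature $\eta^\transp\nabla^2_{yy}d\,\eta\le-\gamma_0\|\eta\|^2$. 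Sharp decrease in the inactive directions and strict concavity in the active directions combine to give $-d(y,z)+d(y^*,z)\ge\gamma\,\dist(y,Y^*(z))^2$ for $y$ near $Y^*(z)$.

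Once quadratic growth is in hand, the dual error bound follows from the standard equivalence between quadratic growth and the proximal-gradient error bound for convex functions (metric subregularity of the projected-gradient map): it yields $\dist(y,Y^*(z))\le C\|y-P_Y(y+\alpha\nabla_y d(y,z))\|$ for $y$ in a neighborhood of $Y^*(z)$. Applying this at $y=y^t$ gives $\dist(y^t,Y^*(z^t))\le C\|y^t-y_+^t\|$, and since $\dist(y_+^t,Y^*)\le\dist(y^t,Y^*)+\|y^t-y_+^t\|$ we reach $\dist(y_+^t,Y^*(z^t))\le(C+1)\|y^t-y_+^t\|$, completing the reduction of the first paragraph. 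The role of the hypothesis $\max\{\|x^t-x^{t+1}\|,\|y^t-y_+^t\|,\|x^{t+1}-z^t\|\}\le\delta$ is precisely to place the iterate inside the neighborhood where this \emph{local} bound is valid: these residuals vanish exactly at stationary points, so a sufficiently small $\delta$ confines $(x^t,y^t,z^t)$ to the region where the uniform constants $\gamma,\gamma_0,C$ apply.

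The main obstacle is the second paragraph: converting the algebraic full-rank hypothesis into a genuine quadratic-growth constant with a \emph{uniform} lower bound over $\{z:\|z\|\le R\}$. The delicate points are (i) showing the active set $\mathcal T(x^*(z))$ is stable under small perturbations of $z$, so that the two-regime decomposition is well defined — which again uses strict complementarity; and (ii) arguing, by continuity of $x^*(z)$ and compactness of $\{z:\|z\|\le R\}$, that $\gamma_0$ and the complementarity gaps $\nu_i$ stay bounded away from $0$, so that $\sigma_5$ does not degenerate. Everything else — the Lipschitz estimate, Danskin's formula, and the quadratic-growth/error-bound equivalence — is routine.
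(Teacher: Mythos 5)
Your route is viable and genuinely different from the paper's, which never establishes quadratic growth of the dual function and stays entirely first-order. The paper first proves a nonhomogeneous \emph{weak} error bound (Lemma \ref{weak error bound-appe}): $\alpha(p-L)\|x^*(z)-x(y_+(z),z)\|^2 \le (1+\alpha L+\alpha L\sigma_2)\,\|y-y_+(z)\|\cdot\dist(y_+(z),Y(z))$, whose product structure is essential. It then proves the \emph{reverse}-direction estimate $\dist(y_+(z),Y(z))\le\lambda\|x^*(z)-x(y_+(z),z)\|$ (Lemma \ref{inverse}) by subtracting the KKT systems for $x(y_+(z),z)$ and $x^*(z)$, viewed as linear equations in the dual variables on a common support: active-set identification (Lemma \ref{same active set}, from strict complementarity via a compactness/contradiction argument) plus a uniform lower bound $\gamma$ on the least singular value of $M$ (Lemma \ref{nonsingular}) let it invert $M^T$. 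Substituting the reverse bound into the weak bound and cancelling one factor of $\|x^*(z)-x(y_+(z),z)\|$ yields the homogeneous bound. Note the circularity you would face if you tried to use the paper's Lemma \ref{inverse} inside your reduction ($\dist(y_+,Y^*)\lesssim\|x^*-x(y_+)\|\lesssim\dist(y_+,Y^*)$): the paper escapes it through the product-structured weak bound, while you escape it through quadratic growth of $-d(\cdot,z)$ and the standard equivalence with the proximal-gradient error bound. Your first-paragraph reduction is in fact simpler than the paper's combination step, and the Lipschitz estimate you derive is already Lemma \ref{Lp_continue}, inequality \eqref{eb2}, with the explicit constant $\sigma_2=2(p+L)/(p-L)$ and no need for the Jacobian bound $G$; your localization and uniformity plan in the final paragraph is exactly the paper's own technique (Lemmas \ref{limit}, \ref{nonsingular}, \ref{same active set}). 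What the QG route buys is conceptual modularity: it exposes the full-rank hypothesis as dual strong concavity on the critical cone.

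There is, however, one step that fails under the paper's stated hypotheses: the curvature computation. Assumption \ref{basic-ass} grants only Lipschitz continuity of $\nabla_x f$ and $\nabla_y f$, so $\nabla^2_{xx}K$ need not exist, the implicit differentiation of $\nabla_x K(x(y,z),z;y)=0$ is not licensed, and $\nabla^2_{yy}d=-\nabla F^T H^{-1}\nabla F$ is undefined; this is precisely why the paper's Lemma \ref{inverse} extracts the same injectivity of $M^T$ on the zero-sum active subspace directly from the difference of the two first-order KKT systems, with no second derivatives. To repair your argument you must either strengthen the smoothness assumption to $F\in C^2$ or replace the Hessian step by the KKT-difference argument, at which point you are close to the paper's proof. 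Beyond that, the fusion of the sharp (linear) regime on inactive coordinates with the quadratic regime on the zero-sum active subspace into a genuine quadratic-growth inequality is real work you have only sketched: a feasible direction $\eta$ need not split cleanly, and the component of $\eta_{\mathcal{I}}$ transverse to the zero-sum subspace must be absorbed into the linear term using $\mathbf 1^T\eta=0$; this is repairable, and the uniformity-in-$z$ issues you flag are handled by the same compactness arguments the paper uses, but together with the $C^2$ issue this is where the unproven substance of your proposal sits.
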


Using this lemma, we can prove Theorem \ref{discrete}.
We choose $\beta$ sufficiently small, then when the residuals appear in \eqref{basic estimate} are large, we can prove that $\phi^t$ decreases sufficiently using the compactness of $Y$. When the residuals are  small, the error bound Lemma~\ref{dual error bound} can be used to guarantee the sufficient decrease of $\phi^t$.
Therefore, \eqref{suff-decrease} always holds, which yields  Theorem~\ref{discrete}. However, for the general nonconvex-concave problem \ref{minimax1}, we can only have a ``weaker'' bound.
\begin{lemma}\label{weak error bound}
Suppose Assumption \ref{basic-ass} holds for problem \ref{minimax1}.
Define $D(Y)$ to be the diameter of $Y$. If Assumption \ref{basic-ass} holds, we have
\begin{eqnarray*}
\alpha (p-L)\|x^*(z^t)-x(y^t_+(z^t), z^t)\|^2 \le (1+\alpha L + \alpha L \sigma_2)\|y^t-y^t_+(z^t)\|\cdot D(Y)
\end{eqnarray*}
for some $\sigma_2 > 0$.
\end{lemma}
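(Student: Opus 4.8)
The plan is to bound the squared distance $\|x^*(z^t) - x(y_+^t(z^t), z^t)\|^2$ by routing through the \emph{suboptimality gap} of the concave dual function $d(\cdot, z^t)$ rather than through the distance of the dual iterates. This detour is exactly what forces a square on the left but only a first power on the right: in the merely-concave regime (as opposed to the strongly-concave special case behind Lemma~\ref{dual error bound}) we cannot control $\|y_+^t(z^t) - y^*\|$ by the residual, yet we can still control the gap in dual value. Write $z = z^t$, $\hat y = y_+^t(z^t)$, $\hat x = x(\hat y, z)$, let $x^* = x^*(z)$, and let $y^*$ be a maximizer of $d(\cdot, z)$ over $Y$. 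Two structural facts are used throughout. First, since $\nabla_x f$ is $L$-Lipschitz and $p > 3L$, the map $K(\cdot, z; y)$ is $(p-L)$-strongly convex in $x$. Second, since $f$ is concave in $y$, the function $d(\cdot, z) = \min_{x\in X} K(x, z; \cdot)$ is concave, and by Danskin's theorem $\nabla_y d(y, z) = \nabla_y K(x(y,z), z; y) = \nabla_y f(x(y,z), y)$; in particular the definition of $y_+^t(z^t)$ is precisely a projected-gradient-ascent step $\hat y = P_Y(y^t + \alpha \nabla_y d(y^t, z))$. Because $Y$ is compact, a saddle point exists, so $x^* = x(y^*, z)$, $y^*$ maximizes $K(x^*, z; \cdot)$ over $Y$, and $P(z) = d(y^*, z) = K(x^*, z; y^*)$.

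First I would establish a lower bound on the gap. Since $\hat x$ minimizes the $(p-L)$-strongly convex function $K(\cdot, z; \hat y)$, strong convexity gives $K(x^*, z; \hat y) \ge K(\hat x, z; \hat y) + \tfrac{p-L}{2}\|x^* - \hat x\|^2$, while $K(x^*, z; y^*) \ge K(x^*, z; \hat y)$ because $y^*$ maximizes $K(x^*, z; \cdot)$. Chaining these with $d(\hat y, z) = K(\hat x, z; \hat y)$ and $d(y^*, z) = K(x^*, z; y^*)$ yields
\[
\tfrac{p-L}{2}\,\|x^*(z) - x(\hat y, z)\|^2 \le d(y^*, z) - d(\hat y, z).
\]

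Next I would upper bound this gap by the dual residual $\|y^t - \hat y\|$. Concavity of $d(\cdot, z)$ gives $d(y^*, z) - d(\hat y, z) \le \langle \nabla_y d(\hat y, z), y^* - \hat y\rangle$, and I split this at $\nabla_y d(y^t, z)$. The variational inequality for the projection $\hat y = P_Y(y^t + \alpha \nabla_y d(y^t, z))$, namely $\langle y^t + \alpha \nabla_y d(y^t, z) - \hat y, y^* - \hat y\rangle \le 0$, bounds the $\nabla_y d(y^t, z)$ term by $\tfrac{1}{\alpha}\|y^t - \hat y\|\,D(Y)$ after using $\|y^* - \hat y\| \le D(Y)$. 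For the remaining term $\langle \nabla_y d(\hat y, z) - \nabla_y d(y^t, z), y^* - \hat y\rangle$ I use Lipschitz continuity of $\nabla_y d(\cdot, z)$: combining $L$-Lipschitzness of $\nabla_y f$ with the $\sigma_2 = L/(p-L)$-Lipschitzness of the solution map $y \mapsto x(y,z)$ (a consequence of $(p-L)$-strong monotonicity of $\nabla_x K$) shows $\nabla_y d(\cdot, z)$ is $(L + L\sigma_2)$-Lipschitz, bounding that term by $(L + L\sigma_2)\|y^t - \hat y\|\,D(Y)$. Adding the pieces gives $d(y^*, z) - d(\hat y, z) \le \tfrac{1}{\alpha}(1 + \alpha L + \alpha L\sigma_2)\|y^t - \hat y\|\,D(Y)$, and combining with the previous display (multiplying through by $\alpha$) delivers the claim, any constant factor being absorbed into the stated constants.

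I expect the main obstacle to be conceptual rather than computational: one must resist bounding $\|x^* - \hat x\|$ \emph{linearly}, which would require controlling $\|\hat y - y^*\|$ and is impossible without strong concavity. The entire point is to settle for the weaker squared bound and obtain it through the dual-value gap, where the strong-convexity lower bound and the projection/Lipschitz upper bound cooperate. A secondary technical point is justifying the saddle-point identities and the Danskin differentiability that let one write $d(y^*, z) = K(x^*, z; y^*)$ and $\nabla_y d(y,z) = \nabla_y f(x(y,z), y)$, both of which rest on $(p-L)$-strong convexity in $x$ ensuring a unique, well-behaved inner minimizer $x(y,z)$.
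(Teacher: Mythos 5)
Your proof is correct and reaches the stated bound, but it takes a genuinely different (and more modular) route than the paper. The paper never passes through the dual-value gap: it writes two strong-convexity inequalities for $K(\cdot,z;y)$ — one at $\hat y=y_+(z)$ and one at a dual maximizer $y(z)$ — which together yield modulus $(p-L)$ rather than $(p-L)/2$, and then it bounds the primal gap $K(\hat x,z;y(z))-K(\hat x,z;\hat y)$ by observing that $\hat y$ \emph{exactly} maximizes the perturbed concave program $\bar y\mapsto \alpha K(\hat x,z;\bar y)-\delta^\transp\bar y$ over $Y$, where $\delta=(y_+(z)+\alpha\nabla_yK(x(y_+(z),z),z;y_+(z)))-(y+\alpha\nabla_yK(x(y,z),z;y))$ satisfies $\|\delta\|\le(1+\alpha L+\alpha L\sigma_2)\|y-y_+(z)\|$; the perturbation bound plays the role that your projection variational inequality plus Lipschitzness of $\nabla_y d$ play. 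Your decomposition — concavity gradient inequality for $d(\cdot,z)$, the projection VI at $\hat y=P_Y(y^t+\alpha\nabla_yd(y^t,z))$, and the $L_d=(L+L\sigma_2)$-Lipschitzness of $\nabla_yd$ (the paper's Lemma \ref{Lipschitz-dual}) — is the standard gradient-mapping argument for projected ascent on a smooth concave function, and it buys transparency at the cost of a factor $2$: your single strong-convexity inequality gives $\frac{p-L}{2}\|x^*(z)-\hat x\|^2\le d(y^*,z)-d(\hat y,z)$, so you prove the claim with $2(1+\alpha L+\alpha L\sigma_2)$ on the right. This slack is harmless downstream (the constant is only ever folded into $\lambda_1$, $\lambda_2$, $\lambda_3$), and it is trivially removable within your own framework: bound $K(\hat x,z;y^*)-K(\hat x,z;\hat y)$ instead of the dual gap, noting $\nabla_yK(\hat x,z;\hat y)=\nabla_yd(\hat y,z)$ so your VI-plus-Lipschitz estimate applies verbatim, and add the second strong-convexity inequality at $y^*$ to upgrade the modulus to $(p-L)$. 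Two smaller remarks: your $\sigma_2=L/(p-L)$ from strong monotonicity of $\nabla_xK$ is valid and in fact sharper than the paper's $\sigma_2=2(p+L)/(p-L)$ from Lemma \ref{Lp_continue} (the statement only requires \emph{some} $\sigma_2>0$), and your appeal to compactness of $Y$ for the saddle-point identities $x^*(z)=x(y^*,z)$, $d(y^*,z)=K(x^*(z),z;y^*)$ matches the paper's use of the minimax theorem in Lemma \ref{eqvl} and Lemma \ref{proximal-descent}.
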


Note that this is a nonhomogeneous error bound, which can help us bound the term $\|x^*(z^t)-x(y^t_+(z^t), z^t)\|$ only when $\|y^t-y^t_+(z^t)\|$ is not too small. Therefore, we say it is ``weaker'' dual error bound.
To obtain an $\epsilon$-stationary solution,  we need to choose $\beta$ sufficiently small and proportional to $\epsilon^2$.
In this case, we can prove that if $\phi^t$ stops to decrease, we have already obtained an $\epsilon$-stationary solution by Lemma~\ref{weak error bound}. By the remark after \eqref{suff-decrease}, we need $\mathcal{O}((1/(\epsilon\sqrt{\beta}))^2)=\mathcal{O}(1/\epsilon^4)$ iterations to obtain an $\epsilon$-stationary solution.

\vspace{3mm}

\noindent{\bf Remark.} For the general nonconvex-concave problem \eqref{minimax1}, we need to choose $\beta$ proportional to $\epsilon^2$ and hence the iteration complexity is higher than the previous case. However, it is expected that for a concrete problem with some special structure, the ``weaker" error bound Lemma~\ref{weak error bound} can be improved, as is the iteration complexity bound. This is left as a future work.

\vspace{3mm}

The proof sketch can be summarized in the following steps:
\begin{itemize}
    \item In Step 1, we introduce the potential function $\phi^t$ which is shown to be bounded below.
    To obtain the convergence rate of the algorithms, we want to prove the potential function can make sufficient decrease at every iterate $t$, i.e., we want to show $\phi^t - \phi^{t+1} > 0$. 
    \item In Step 2, we study this difference $\phi^t - \phi^{t+1}$ and provide a lower bound of it in Proposition \ref{basic estimate}. Notice that a negative term (\ref{eqn: negative term}) will show up in the lower bound, and we have to carefully analyze the magnitude of this term to obtain  $\phi^t - \phi^{t+1} > 0$. 
    \item Analyzing the negative term is the main difficulty of the proof. In Step 3, we discuss how to deal with this difficulty for solving Problem \ref{minimax1} and Problem \ref{minmax2} separately.
    \item Finally, we show the potential function makes a sufficient decrease at every iterate as stated in (\ref{suff-decrease}), and conclude our proof by computing the number of iterations to achieve an $\epsilon$-solution (as shown in Lemma \ref{trivial-appe} the appendix).
\end{itemize}

\section{Numerical Results on Robust Neural Network Training}\label{sec: experiment}
In this section, we apply the Smoothed-GDA algorithm to train a robust neural network on MNIST data set against adversarial attacks \cite{goodfellow2014explaining, kurakin2016adversarial, madry2017towards}.
The optimization formulation is
\begin{align}
\label{eq: Madry2}
\min_{\mathbf{w}}\, \; \sum_{i=1}^{N}\;\max_{\delta_i,\;\text{s.t.}\;|\delta_i|_{\infty}\leq \varepsilon}   {\ell}(f(x_i+\delta_i;\mathbf{w}), y_i),
\end{align}
where $\mathbf{w}$ is the parameter of the neural network, the pair $(x_i,y_i)$ denotes the $i$-th data point,
and $\delta_i$ is the perturbation added to data point~$i$.
As \eqref{eq: Madry2} is difficult to solve directly, researchers \cite{nouiehed2019solving} have proposed an approximation of \eqref{eq: Madry2}  as the following nonconvex-concave problem, which is in the form of \eqref{minmax2} we discussed before.
\begin{align}
\label{eq:finite-max2}
\min_{\mathbf{w}}\;\sum_{i=1}^{N}\;\max_{\mathbf{t} \in \mathcal{T}}\sum_{j=0}^{9}\;{t_j}\ell\left(f\left(x_{ij}^K;\mathbf{w}\right),y_i\right),\; \mathcal{T} =\left\{(t_1, \cdots, t_m)\mid \sum_{i=1}^mt_i=1, t_i\ge 0\right\},
\end{align}
where $K$ is a parameter in the approximation, and $x_{ij}^K$ is an approximated attack on sample $x_i$ by changing the output of the network to label $j$. The details of this formulation and the structure of the network in experiments are provided in the appendix.
\begin{table}[H]
\centering
\resizebox{\textwidth}{!}{%
\begin{tabular}{@{}cccccccc@{}}
\toprule
\multicolumn{1}{c}{} & {Natural} & \multicolumn{3}{c}{$\text{FGSM}\;L_{\infty}$~\cite{goodfellow2014explaining}} & \multicolumn{3}{c}{$\text{PGD}^{40}\; L_{\infty}$~\cite{kurakin2016adversarial}} \\ \cmidrule(l){3-8}
\multicolumn{1}{c}{} &  & $\varepsilon=0.2$ & $\varepsilon=0.3$ & $\varepsilon=0.4$ & $\varepsilon=0.2$ & $\varepsilon=0.3$ & $\varepsilon=0.4$ \\ \midrule
\cite{madry2017towards} with $\varepsilon=0.35$ & 98.58\% & 96.09\% & 94.82\% & 89.84\% & 94.64\% & 91.41\% & 78.67\% \\
\cite{zhang2019theoretically} with $\varepsilon=0.35$ & 97.37\% & 95.47\% & 94.86\% & 79.04\% & 94.41\% & 92.69\% & 85.74\% \\
\cite{zhang2019theoretically} with $\varepsilon=0.40$ & 97.21\% & 96.19\% & 96.17\% & 96.14\% & 95.01\% & 94.36\% & 94.11\% \\
\cite{nouiehed2019solving} with $\varepsilon=0.40$ & 98.20\% & 97.04\% & 96.66\% & \bf{96.23}\% & 96.00\% & 95.17 \% & 94.22\% \\
Smoothed-GDA with $\varepsilon=0.40$ & \bf{98.89}\% & \bf{97.87}\% & \bf{97.23}\% & 95.81\% & \bf{96.71}\% & \bf{95.62}\% & \bf{94.51}\% \\\bottomrule
\end{tabular}%
}
\vskip 0.1in
\caption{Test accuracies under FGSM and PGD attacks.
}
\label{tab:robust_nn_results}
\end{table}
\begin{figure}
    \centering
    \includegraphics[width=0.5\textwidth]{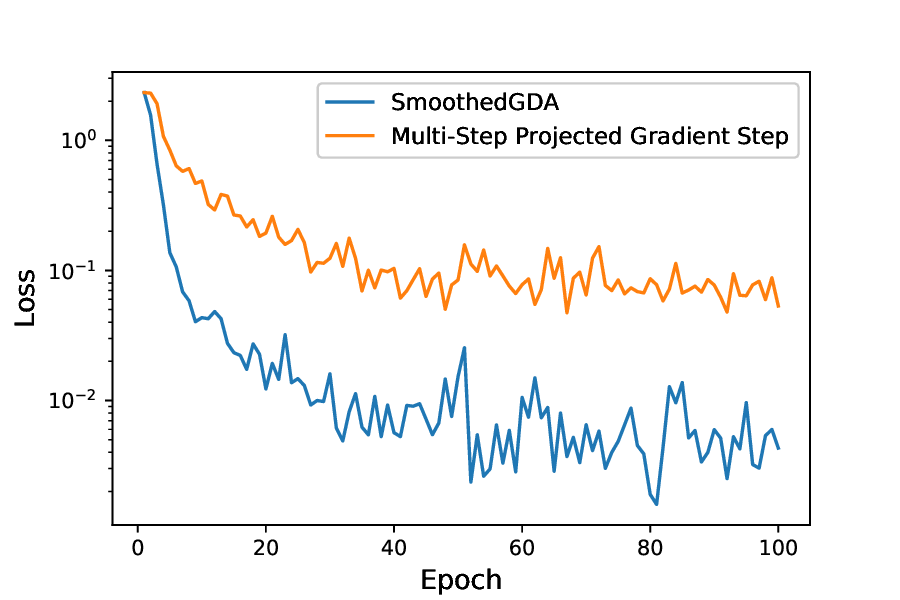}
    \caption{Convergence speed of Smoothed-GDA and the algorithm in \cite{nouiehed2019solving}.
    }
    \label{fig: test loss}
\end{figure}
\noindent\textbf{Results:} We compare our results with three algorithms from \cite{madry2017towards, zhang2019theoretically, nouiehed2019solving}.
The references \cite{madry2017towards, zhang2019theoretically} are two classical algorithms in adversarial training,
while the recent reference \cite{nouiehed2019solving} considers the same problem formulation as \eqref{minmax2} and has an algorithm with $\mathcal{O}(1/\epsilon^{3.5})$ iteration complexity. The accuracy of our formulation are summarized in Table~\ref{tab:robust_nn_results} which shows that the formulation \eqref{minmax2} leads to a comparable or slightly better performance to the other algorithms.
We also compare the convergence on the loss function when using the Smoothed-GDA algorithm and the one in \cite{nouiehed2019solving}. In Figure~\ref{fig: test loss}, Smoothed-GDA algorithm takes only 5 epochs to get the loss values below 0.2 while the algorithm proposed in \cite{nouiehed2019solving} takes more than 14 epochs. In addition, the loss obtained from the Smoothed-GDA algorithm has a smaller variance.

\section{Conclusion}
In this paper, we propose a simple single-loop algorithm for nonconvex min-max problems \eqref{minimax1}. For an important family of problems \eqref{minmax2}, the algorithm is even more efficient due to the  {\bf dual error bound},  and it is well-suited for problems in large-size dimensions and distributed setting.  The algorithmic framework is flexible, and hence in the future work, we can extend the algorithm to more practical problems and derive {\bf stronger} error bounds to attain lower iteration complexity.

\newpage
\section*{Broader Impact}
In this paper, we propose a single-loop algorithm for min-max problem. This algorithm is easy to implemented and proved to be efficient in a family of nonconvex minimax problems and have good numerical behavior in robust training.
This paper focuses on theoretical study of the algorithms. In industrial applications, several aspects of impact can be expected:

\begin{enumerate}
\item \textbf{Save energy by improving efficiency.} The trick developed in this paper has the potential to accelerate the training for machine learning problems involving a minimax problem such robust training for uncertain data, generative adversarial net(GAN) and AI for games. This means that the actual training time will decrease dramatically by using our algorithm. Training neural network is very energy-consuming, and reducing the training time can help the industries or companies to save energy.
\item \textbf{Promote fairness.} We consider min-max problems in this paper. A model that is trained under this framework will not allow poor performance on some objectives in order to boost performance on the others. Therefore, even if the training data itself is biased, the model will not allow some objectives to contribute heavily to minimizing the average loss due to the min-max framework. In other words, this framework promotes fairness, and model that is trained under this framework will provide fair solutions to the problems.
\item \textbf{Provide flexible framework.} Our algorithmic framework is flexible. Though in the paper, we only discuss some general formulation, our algorithm can be easily extended to many practical settings. For example, based on our general framework for multi-block problems, we can design algorithms efficiently solving problems with distributedly stored data, decentralized control or privacy concern. Therefore, our algorithm may have an impact on some popular big data applications such as distributed training, federated learning  and so on.
\end{enumerate}

\section*{Funding Disclosure}
This research is supported by 
the leading talents of Guangdong Province program [Grant 00201501]; 
the National Science Foundation of China [Grant 61731018];
the Air Force Office of Scientific Research [Grant FA9550-12-1-0396]; 
the National Science Foundation [Grant CCF 1755847];
Shenzhen Peacock Plan [Grant KQTD2015033114415450];
the Development and Reform Commission of Shenzhen Municipality;
and Shenzhen Research Institute of Big Data. 


\newpage
\bibliography{reference}
\bibliographystyle{ieeetr}

\newpage
\appendix

In the appendix, we will give the proof of the main theorems. The appendix is organized as follows:
\begin{enumerate}
\item In section \ref{Appendix:Notations}, we list some notations used in the appendix.
\item In section \ref{Appendix: one block}, we prove the two main theorems in one-block case.
\item In section \ref{Appendix: multi block}, we briefly state the proof of the two main theorems in multi-block setting.
\item In section \ref{Appendix: error bound}, we prove the two main error bound  lemmas (Lemma \ref{weak error bound} and Lemma \ref{dual error bound}) under the strict complementarity assumption.
\item In section \ref{Appendix: sc}, we see that the strict complementarity assumption can be relaxed to a weaker regularity assumption. We also prove that this weaker regularity assumption is generic for robust regression problems with square loss, i.e.,  we prove that our regularity assumption holds with probability $1$ if the data points are joint from a continuous distribution.
\item In the last section \ref{Appendix: experiment}, we give some more details about the experiment.
\end{enumerate}
\section{Notations}\label{Appendix:Notations}

We first list some notations which will be used  in the appendix.
\begin{enumerate}
\item $[m]=\{1, 2, \cdots, m\}$.
\item $W^*$ is the solution set of  \eqref{minimax1} or \eqref{minmax2}. $X^*$ is the set of all solutions $x^*$, i.e., $x^*\in X^*$ if there exists a $y^*$ such that $(x^*, y^*)\in W^*$.
\item $\mathcal{B}(r)$ is a Euclidian ball of radius $r$ for proper dimension.
 \item $\mathrm{dist}(v, S)$ means the Euclidian distance from a point $v$ to a set $S$.
\item For a vector $v$,  $v_i$ means the $i$-th component of $v$. For a set $\mathcal{S}$, $v_{\mathcal{S}}\in \mathbb{R}^{|\mathcal{S}|}$ is the vector containing all components  $v_i$'s with $i\in\mathcal{S}$.
 \item Let $\mathbf{A}\in \mathbb{R}^{n\times m}$ be a matrix and $\mathcal{S}\subseteq [m]$ be an index set. Then $\mathbf{A}_{\mathcal{S}}$
 represents the row sub-matrix of $A$ corresponding to the rows with index in $\mathcal{S}$.
\item For a matrix $\mathbf{M}$, $\gamma(\mathbf{M})$ is the smallest singular value of $\mathbf{M}$.
 \item The projection of a point $y$, onto a set $X$ is defined as $P_X(y) = \argmin_{x \in X} \frac{1}{2} \|x - y\|^2$.
\end{enumerate}

\section{Proof of the two main theorems: one-block case}\label{Appendix: one block}
In this section, we prove the two main theorems in one-block case. The proof of the multi-block case is similar and will be given in the next section.

\textbf{Proof Sketch.}

\begin{itemize}
    \item In Step 1, we will introduce the potential function $\phi^t$ which is shown to be bounded below.
    To obtain the convergence rate of the algorithms, we want to prove the potential function can make sufficient decrease at every iterate $t$, i.e., we want to show $\phi^t - \phi^{t+1} > 0$. 
    \item In Step 2, we will study this difference $\phi^t - \phi^{t+1}$ and provide a lower bound of it in Proposition \ref{basic estimate}. Notice that a negative term (\ref{eqn: negative term}) will show up in the lower bound, and we have to carefully analyze the magnitude of this term to obtain  $\phi^t - \phi^{t+1} > 0$. 
    \item Analyzing the negative term will be the main difficulty of the proof. In Step 3, we will discuss how to deal with this difficulty for solving Problem \ref{minimax1} and Problem \ref{minmax2} separately.
    \item Finally, we will show the potential function makes a sufficient decrease at every iterate as stated in (\ref{suff-decrease}), and will conclude our proof by computing the number of iterations to achieve an $\epsilon$-solution in Lemma \ref{trivial-appe}.
\end{itemize}

\subsection{The potential function and basic estimate}
Recall that the potential function is:
$$\phi^t=\Phi(x^t, z^t;y^t)=K(x^t, z^t;y^t)-2d(y^t, z^t)+2P(z^t),$$
where
\begin{eqnarray*}
K(x, z; y)&=&f(x, y)+\frac{p}{2}\|x-z\|^2,\\
d(y, z)&=&\min_{x\in X}K(x, z; y),\\
P(z)&=&\min_{x\in X}\max_{y\in Y}K(x, z; y).
\end{eqnarray*}
Also note that if $p>L$, $K(x, z; y)$ is strongly convex of $x$ with modular $p-L$ and $\nabla_xK(x, z; y)$ is Lipschitz-continuous of $x$ with a constant $L+p$.
We also use the following notations:
\begin{enumerate}
\item $h(x, z)=\max_{y\in Y}K(x, z; y)$.
\item $x(y, z)=\arg\min_{x\in X}K(x, z; y)$, $x^*(z)=\arg\min_{x\in X}h(x, z)$.
\item The set $Y(z)=\arg\max_{y\in Y}d(y, z)$.
\item $y_+(z)=P_Y(y+\alpha\nabla_yK(x(y, z), z; y))$.
\item $x_+(y, z)=P_X(x-c\nabla_xK(x, z; y))$.
\end{enumerate}
First of all, we can prove that $\phi^t$ is bounded from below:
\begin{lemma}\label{lower-bounded}
We have
$$\phi(x, y, z)\ge \underline{f}.$$

\end{lemma}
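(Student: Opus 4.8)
The plan is to derive the bound $\phi \ge \underline{f}$ from a short chain of elementary inequalities, each of which simply records the effect of one inner $\min$ or $\max$ in the definitions of $K$, $d$, and $P$. The key structural observation is that the three building blocks are nested: $K(x,z;y)$ dominates its own $x$-minimum $d(y,z)$, and $d(y,z)$ is in turn dominated by the saddle value $P(z)$ (a weak-duality-type inequality). Combining these with the boundedness-below of $\psi$ from Assumption~\ref{basic-ass} will give the claim.

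Concretely, I would proceed in three steps. First, since $d(y,z)=\min_{x\in X}K(x,z;y)$, we trivially have $K(x,z;y)\ge d(y,z)$ for every admissible $x$, so that
\begin{equation*}
\phi(x,y,z)=K(x,z;y)-2d(y,z)+2P(z)\ge -d(y,z)+2P(z).
\end{equation*}
Second, I would establish the inequality $d(y,z)\le P(z)$. This follows because for any fixed $y$ one has $K(x,z;y)\le \max_{y'\in Y}K(x,z;y')=h(x,z)$ pointwise in $x$; taking the minimum over $x\in X$ of both sides yields $d(y,z)=\min_x K(x,z;y)\le \min_x h(x,z)=P(z)$. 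Substituting this into the previous display gives $\phi(x,y,z)\ge -d(y,z)+2P(z)\ge P(z)$.

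Third, I would show $P(z)\ge \underline{f}$ by unfolding the definition of $K$ and interchanging the (already performed) maximization with the quadratic term, which does not depend on $y$:
\begin{equation*}
P(z)=\min_{x\in X}\max_{y\in Y}\Big\{f(x,y)+\tfrac{p}{2}\|x-z\|^2\Big\}
=\min_{x\in X}\Big\{\psi(x)+\tfrac{p}{2}\|x-z\|^2\Big\}\ge \min_{x\in X}\psi(x)\ge \underline{f},
\end{equation*}
where the first inequality drops the nonnegative proximal term and the last is exactly the lower-bound assumption on $\psi$. Chaining $\phi\ge P(z)\ge\underline{f}$ completes the argument. I do not anticipate a genuine obstacle here: the only point demanding a moment of care is the second step, the $d\le P$ inequality, since one must apply the $\min_x$ operator to a pointwise-in-$x$ bound rather than to a single fixed minimizer; writing it via the two minimizers $x(y,z)$ and $x^*(z)$ makes it unambiguous. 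Everything else is a direct consequence of monotonicity of $\min/\max$ and nonnegativity of $\tfrac{p}{2}\|x-z\|^2$, and the bound holds for all $(x,y,z)$, not merely along the iterates.
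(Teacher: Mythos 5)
Your proof is correct and follows essentially the same route as the paper's: both arguments rest on the three inequalities $K(x,z;y)\ge d(y,z)$, $d(y,z)\le P(z)$, and $P(z)\ge \underline{f}$, chained to give $\phi(x,y,z)\ge P(z)\ge \underline{f}$ (the paper writes the chain as $\phi = P(z)+(K-d)+(P-d)\ge P(z)$, which is algebraically the same as your two-step reduction). The only difference is that you spell out the weak-duality step and the bound $P(z)\ge\underline{f}$ explicitly, whereas the paper asserts them directly from the definitions.
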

\begin{proof}
By the definition of $d(\cdot)$ and $P(\cdot)$, we have
\begin{equation}\label{weak-duality}
K(x, z; y)\ge d(y, z), \quad P(z)\ge d(y, z), \quad, P(z)\ge \underline{f}.
\end{equation}
Hence, we have
\begin{eqnarray*}
\phi(x, y, z)&=&P(z)+(K(x, z; y)-d(y, z))+(P(z)-d(y, z))\\
&\ge&P(z)\\
&\ge\underline{f}.
\end{eqnarray*}
\end{proof}
Next, we state some ``error bounds''.
\begin{lemma}\label{Lp_continue}
There exist constants $\sigma_1, \sigma_2, \sigma_3$ independent of $y$ such that
\begin{eqnarray}
 &&\|x(y, z)-x(y, z')\|\le \sigma_1\|z-z'\|, \label{eb1}\\
&&\|x^*(z)-x^*(z')\|\le \sigma_1\|z-z'\|, \label{eb11}\\
&&\|x(y, z)-x(y', z)\|\le \sigma_2\|y-y'\|,\label{eb2}\\
&&\|x^{t+1}-x(y^t, z^t)\|\le \sigma_3\|x^t-x^{t+1}\|,\label{eb3}
\end{eqnarray}
for any $y, y'\in Y$ and $z, z'\in X$,
where $\sigma_1=\frac{p}{-L+p}$, $\sigma_2=\frac{2(p+L)}{p-L}$, $\sigma_3=\frac{1+(c(-L+p))}{c(-L+p)}$..
\end{lemma}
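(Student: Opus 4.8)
The plan is to treat the four inequalities as instances of one principle: for fixed parameters the inner objective $K(\cdot,z;y)$ is $(p-L)$-strongly convex in $x$ (because $p>L$ and $\nabla_x f$ is $L$-Lipschitz), so its constrained minimizer is unique and varies Lipschitz-continuously with the parameters. Throughout I would rely on the first-order characterization of a constrained minimizer $x^\star=\arg\min_{x\in X}g(x)$ of a $\mu$-strongly convex $g$, i.e. the variational inequality $\langle \nabla g(x^\star),x-x^\star\rangle\ge 0$ for all $x\in X$ (equivalently the lower bound $g(x)-g(x^\star)\ge \tfrac{\mu}{2}\|x-x^\star\|^2$), together with the standard ``add the two optimality inequalities'' device.

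For \eqref{eb1} and \eqref{eb11} I would perturb only $z$. Writing the strong-convexity lower bounds for the pair $x(y,z),x(y,z')$ (resp.\ for $x^*(z),x^*(z')$) and adding them, the key observation is that $z$ enters $K(x,z;y)$ (resp.\ $h(x,z)=\psi(x)+\tfrac p2\|x-z\|^2$) only through the quadratic term $\tfrac p2\|x-z\|^2$, so the difference of the two objectives is affine in $x$ and the entire $f$- (resp.\ $\psi$-) dependence cancels. The surviving cross term is $p\langle z'-z,\,x(y,z')-x(y,z)\rangle$, and combining it with the $(p-L)$-strong-convexity bound yields $\|x(y,z)-x(y,z')\|\le \tfrac{p}{p-L}\|z-z'\|=\sigma_1\|z-z'\|$. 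The same computation with $h$ in place of $K$ gives \eqref{eb11}; the one subtlety to stress is that although $h(\cdot,z)$ is nonsmooth, it is still $(p-L)$-strongly convex and we never differentiate $\psi$, so the argument goes through unchanged. For \eqref{eb2} I would instead perturb $y$ with $z$ fixed and run the same added-inequalities argument for $x(y,z),x(y',z)$. Now the objective difference is $f(x,y)-f(x,y')$, which is not affine in $x$, so the cross term $f(b,y)-f(a,y)-\big(f(b,y')-f(a,y')\big)$ must be bounded using the $L$-Lipschitz continuity of $\nabla_x f$ in $y$; estimating it by the fundamental theorem of calculus produces a factor $\|y-y'\|$, and dividing by the strong-convexity modulus gives $\|x(y,z)-x(y',z)\|\le \sigma_2\|y-y'\|$ with $\sigma_2=\tfrac{2(p+L)}{p-L}$.

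The genuinely different estimate is \eqref{eb3}, which compares the inexact one-step iterate $x^{t+1}=P_X(x^t-c\nabla_x K(x^t,z^t;y^t))$ with the exact minimizer $x(y^t,z^t)$, and this is where I expect the main obstacle. Here I would work with the gradient-mapping residual $\mathcal G=\tfrac1c(x^t-x^{t+1})$ and use that $\nabla_x K(\cdot,z^t;y^t)$ is strongly monotone with modulus $p-L$ and Lipschitz with constant $p+L$. The plan is first to establish the strongly-convex residual bound $\|x^t-x(y^t,z^t)\|\le \tfrac{1}{c(p-L)}\|x^t-x^{t+1}\|$ by pairing the projection variational inequality satisfied by $x^{t+1}$ with the optimality inequality for $x(y^t,z^t)$ and invoking strong monotonicity (the step-size conditions $p>L$ and $c<1/(p+L)$ enter precisely at this point), and then to add $\|x^t-x^{t+1}\|$ via the triangle inequality to reach $\|x^{t+1}-x(y^t,z^t)\|\le \tfrac{1+c(p-L)}{c(p-L)}\|x^t-x^{t+1}\|=\sigma_3\|x^t-x^{t+1}\|$. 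The delicate part is controlling the cross term between the gradient mapping $\mathcal G$ and $\nabla_x K$ tightly enough that the residual bound carries exactly the modulus $p-L$ (and not $p+L$) in the denominator; all other steps are routine. Finally, since each of $\sigma_1,\sigma_2,\sigma_3$ depends only on $p,L,c$, the constants are manifestly independent of $y,y',z,z'$, as claimed.
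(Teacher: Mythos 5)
Your overall strategy is sound, and note first that the paper itself only writes out a proof of \eqref{eb2}, deferring \eqref{eb1}, \eqref{eb11} and \eqref{eb3} to Lemma 3.10 of \cite{zhang2020proximal}, so your self-contained treatment goes beyond what appears on the page. Your arguments for \eqref{eb1} and \eqref{eb11} (add the two strong-convexity optimality inequalities; the $f$- resp.\ $\psi$-dependence cancels because $z$ enters only through the quadratic, leaving the cross term $p\langle z'-z,\,x(y,z')-x(y,z)\rangle$) are the standard route and deliver $\sigma_1=p/(p-L)$ exactly, including your correct observation that nonsmoothness of $\psi$ is harmless since it is never differentiated. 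For \eqref{eb2} you genuinely diverge from the paper: the paper keeps the two strong-convexity inequalities but controls the $y$-variation via concavity of $K(x,z;\cdot)$ and $L$-Lipschitzness of $\nabla_y K$, arriving at the quadratic inequality $\zeta^2\le \frac{p+L}{p-L}\zeta+\frac{L}{2(p-L)}$ in the ratio $\zeta=\|x(y,z)-x(y',z)\|/\|y-y'\|$, which it solves to obtain $\sigma_2=2(p+L)/(p-L)$. Your symmetric added-inequalities argument instead bounds the cross term $[f(b,y)-f(a,y)]-[f(b,y')-f(a,y')]$ by the fundamental theorem of calculus and the $L$-Lipschitzness of $\nabla_x f$ in $y$, which gives $(p-L)\|a-b\|^2\le L\|y-y'\|\,\|a-b\|$, i.e.\ the constant $L/(p-L)$ --- strictly smaller than what you claim it produces, so the stated bound with $\sigma_2=2(p+L)/(p-L)$ follows a fortiori. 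Your route is shorter, skips the quadratic-inequality step, and buys a tighter constant, at the price of using Lipschitzness of $\nabla_x f$ in $y$ where the paper uses concavity and smoothness in $y$; both are covered by Assumption \ref{basic-ass}.

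The one step that would fail as written is your first-stage bound for \eqref{eb3}. Pairing the projection variational inequality for $x^{t+1}$ with the optimality inequality for $\bar x=x(y^t,z^t)$ and invoking strong monotonicity --- the plan you describe --- yields, after the unavoidable Cauchy--Schwarz on the cross term, $\|x^t-\bar x\|\le \frac{1+c(p+L)}{c(p-L)}\|x^t-x^{t+1}\|$, not $\frac{1}{c(p-L)}\|x^t-x^{t+1}\|$; the Lipschitz constant $p+L$ cannot be dislodged from the numerator by that pairing, and you correctly sensed this as the delicate point. This is not cosmetic: $\sigma_3$ enters the step-size condition $\alpha<c^2(p-L)^2/\bigl(4L(1+c(p-L))^2\bigr)=1/(4L\sigma_3^2)$, so a larger $\sigma_3$ changes the parameter regime of the main theorems. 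The repair is a contraction argument in place of the raw VI pairing: for a $(p-L)$-strongly convex, $(p+L)$-smooth function, the map $x\mapsto x-c\nabla_x K(x,z^t;y^t)$ is $(1-c(p-L))$-Lipschitz whenever $c\le 2/\bigl((p-L)+(p+L)\bigr)=1/p$, which the hypothesis $c<1/(p+L)$ guarantees; composing with the nonexpansive projection and using that $\bar x$ is a fixed point of the projected-gradient map gives $\|x^{t+1}-\bar x\|\le (1-c(p-L))\|x^t-\bar x\|$, hence $\|x^t-\bar x\|\le \frac{1}{c(p-L)}\|x^t-x^{t+1}\|$, and the triangle inequality then lands exactly on $\sigma_3=\frac{1+c(p-L)}{c(p-L)}$. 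With that substitution your proof is complete.
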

\begin{proof}
The proofs of \eqref{eb1}, \eqref{eb11}  and \eqref{eb3} are the same as those in   Lemma 3.10 in \cite{zhang2020proximal} and hence omitted. 
We only need to prove \eqref{eb2}.
Using the strong convexity of $K(\cdot, z; y)$ of $x$, we have
\begin{eqnarray}\label{str}
&&K(x(y, z), z; y)-K(x(y', z), z; y)\le-\frac{-L+p}{2}\|x(y, z)-x(y', z)\|^2,\\
&&K(x(y, z), z; y')-K(x(y', z), z; y')\ge\frac{-L+p}{2}\|x(y, z)-x(y', z)\|^2.
\end{eqnarray}
Moreover, using the concavity of $K(x, z;\cdot)$ of $y$, we have
\begin{eqnarray}\label{concave}
\nonumber &&K(x(y, z), z; y')-K(x(y, z), z; y)\\
&\le& \langle \nabla_yK(x(y, z), z; y), y'-y\rangle.
\end{eqnarray}
Using the Lipschitz-continuity of $\nabla_yK(x, z;\cdot)$, we have
\begin{eqnarray}\label{lip}
\nonumber &&K(x(y', z), z; y)-K(x(y', z), z; y')\\
&\le&\langle \nabla_yK(x(y', z), z; y), y'-y\rangle+\frac{L}{2}\|y-y'\|^2.
\end{eqnarray}
Combining \eqref{str} and \eqref{lip}, we have

\begin{eqnarray}\label{quadratic-inequality}
&&(-L+p)\|x(y, z)-x(y', z)\|^2\\
&\le&\langle \nabla_yK(x(y, z), z; y)-\nabla_yK(x(y', z), z; y), y'-y\rangle+\frac{L}{2}\|y-y'\|^2\\
&\le&(p+L)\|x(y', z)-x(y, z)\|\|y-y'\|+\frac{L}{2}\|y-y'\|^2,
\end{eqnarray}
where the last inequality uses the Cauchy-schwarz inequality and the Lipschitz-continuity of $\nabla_xK(\cdot, z; y)$ of $x$.

Let $\zeta=\|x(y, z)-x(y', z)\|/\|y-y'\|$.
Then \eqref{quadratic-inequality} becomes
$$\zeta^2\le \frac{p+L}{p-L}\zeta+\frac{L}{2(p-L)}.$$
Hence, we only need to solve the above quadratic inequality.
We have
\begin{eqnarray*}
\zeta^2&\le&\frac{1}{2}\zeta^2+\frac{1}{2}\left(\frac{p+L}{p-L}\right)^2+\frac{L}{2(p-L)}\\
&\le&\frac{1}{2}\zeta^2+\frac{1}{2}\left(\frac{p+L}{p-L}\right)^2+\frac{p+L}{2(p-L)}\\
&\le&\frac{1}{2}\zeta^2+\frac{1}{2}\left(\frac{p+L}{p-L}\right)^2+\frac{1}{2}\left(\frac{p+L}{p-L}\right)^2\\
&=&\frac{1}{2}\zeta^2+\left(\frac{p+L}{p-L}\right)^2,
\end{eqnarray*}
where the first inequality is due to the AM-GM inequality and the third inequality is because $(p+L)/(p-L)>1$.
Therefore
$$\zeta\le \sqrt{2}\frac{p+L}{p-L}<2\frac{p+L}{p-L}.$$
Hence, we can take $\sigma_2=2(p+L)/(p-L)$ and finish the proof.
\end{proof}
The following lemma is a direct corollary of the above lemma:
\begin{lemma}\label{Lipschitz-dual}
The dual function $d(\cdot, z)$ is a differentiable function of $ y$with Lipschitz continuous gradient
$$\nabla_yd(y, z)=\nabla_yK(x(y, z), z; y)=\nabla_yf(x(y, z), y)$$
and
\[
\|\nabla_y d(y',z)-\nabla_y d(y'',z)\|\le {L}_d\|y'-y''\|,\quad  \forall\; y', y''\in Y.
\]
with $L_d=L+L\sigma_2$.
\end{lemma}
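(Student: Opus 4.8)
The plan is to obtain both assertions as immediate consequences of Lemma~\ref{Lp_continue} together with an envelope (Danskin) argument. First I would establish the gradient formula. Since $p>L$, the function $K(\cdot,z;y)$ is strongly convex in $x$ with modulus $p-L$, so the inner minimizer $x(y,z)=\arg\min_{x\in X}K(x,z;y)$ is unique for every $(y,z)$. Because $K$ is jointly smooth and the feasible set $X$ is fixed (independent of $y$), Danskin's theorem applies to $d(y,z)=\min_{x\in X}K(x,z;y)$ and yields that $d(\cdot,z)$ is differentiable in $y$ with $\nabla_y d(y,z)=\nabla_y K(x(y,z),z;y)$. Finally, since the proximal term $\frac{p}{2}\|x-z\|^2$ does not depend on $y$, we have $\nabla_y K(x,z;y)=\nabla_y f(x,y)$, which gives the second stated identity $\nabla_y d(y,z)=\nabla_y f(x(y,z),y)$.

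For the Lipschitz estimate, I would directly bound the difference of gradients at two points $y',y''\in Y$. Writing $\nabla_y d(y',z)-\nabla_y d(y'',z)=\nabla_y f(x(y',z),y')-\nabla_y f(x(y'',z),y'')$ and invoking the $L$-Lipschitz continuity of $\nabla_y f$ from Assumption~\ref{basic-ass}, I obtain
\[
\|\nabla_y d(y',z)-\nabla_y d(y'',z)\|\le L\big(\|x(y',z)-x(y'',z)\|+\|y'-y''\|\big).
\]
Applying the error bound \eqref{eb2}, namely $\|x(y',z)-x(y'',z)\|\le\sigma_2\|y'-y''\|$, collapses the right-hand side to $L(\sigma_2+1)\|y'-y''\|$, so that the Lipschitz constant is $L_d=L+L\sigma_2$, exactly as claimed.

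The only genuinely nontrivial point is the differentiability and the gradient identity, which rest on verifying the hypotheses of Danskin's theorem; here the strong convexity guaranteed by $p>L$ does the work by ensuring a unique inner minimizer, so that the value function $d(\cdot,z)$ has a singleton subdifferential and is in fact differentiable. Once this gradient formula is in hand, the Lipschitz claim is a one-line combination of the smoothness assumption with the already-established error bound \eqref{eb2}, and I do not expect any further obstacle beyond correctly tracking the parameter regime $p>L$ that underlies both steps.
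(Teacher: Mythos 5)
Your proposal is correct and follows essentially the same route as the paper: Danskin's theorem (justified by the strong convexity of $K(\cdot,z;y)$ when $p>L$, which the paper notes implicitly) gives the gradient formula, and the Lipschitz bound follows from the triangle-inequality decomposition into a $y$-change and an $x$-change, with \eqref{eb2} from Lemma~\ref{Lp_continue} controlling the latter. The paper's proof is identical in substance, merely phrased in terms of $\nabla_y K$ rather than $\nabla_y f$.
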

\noindent{\bf Remark.}Note that if $p\ge 3L$, then we have $\sigma_2=2(p+L)/(p-L)\ge 4L$ and hence
\begin{equation}\label{5L}
L_d\ge5L.
\end{equation}
\begin{proof}
Using Danskin's theorem in convex analysis \cite{bertsekas2009convex}, we know that $d$ is a differentiable function with
$$\nabla_yd(y, z)=\nabla_yK(x(y, z), z; y)=\nabla_yf(x(y, z), y).$$
To prove the Lipschitz-continuity, we have
\begin{eqnarray*}
\|\nabla_y d(y',z)-\nabla_y d(y'', z)\|&=&\|\nabla_y K(x(y',z),z;y')-\nabla_y K(x(y'',z),z;y'')\|\\
&\le& \|\nabla_y K(x(y',z),z;y')-\nabla_y K(x(y',z),z;y'')\|\\
&&+\|\nabla_y K(x(y',z),z;y'')-\nabla_y K(x(y'',z),z;y'')\|\\
&\leq&L\|y'-y''\|+L\|x(y',z)-x(y'',z)\|\\
&\leq&L\|y'-y''\|+L\sigma_2\|y'-y''\|={L}_d\|y'-y''\|,
\end{eqnarray*}
where the last inequality is due to Lemma \ref{Lp_continue}.
\end{proof}

We then prove the following basic estimate.
\begin{proposition}\label{basic-estimate-appe}
We let
\begin{align}\label{eqn: parameters for Alg 2 general case-appe}
    p>3L, c<\frac{1}{p+L}, \alpha<\min\{\frac{1}{11L}, \frac{1}{4L\sigma_3^2}\}=\min\{\frac{1}{11L}, \frac{c^2(p-L)^2}{4L(1+c(p-L))^2}\}, 
    \beta<\min\{\frac{1}{36}, \frac{(p-L)^2}{384\alpha p(p+L)^2}\}.
\end{align}
Then we have
\begin{eqnarray}\label{basic estimate-appe}
&&\phi^t-\phi^{t+1}\\
\nonumber&\ge&\frac{1}{8c}\|x^t-x^{t+1}\|^2\\
&&+\frac{1}{8\alpha}\|y^t-y^t_+(z^t)\|^2+\frac{p}{8\beta}\|z^t-z^{t+1}\|^2\\
&&-24p\beta\|x^*(z^{t})-x(y^t_+(z^t), z^{t})\|^2
\end{eqnarray}
\end{proposition}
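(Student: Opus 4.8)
The plan is to split the one-step change of the potential according to its three constituent functions,
$$\phi^t - \phi^{t+1} = \big[K(x^t,z^t;y^t) - K(x^{t+1},z^{t+1};y^{t+1})\big] + 2\big[d(y^{t+1},z^{t+1}) - d(y^t,z^t)\big] + 2\big[P(z^t) - P(z^{t+1})\big],$$
and to estimate each bracket by walking through the intermediate iterates in the order $x$, then $y$, then $z$ (the order in which Algorithm \ref{Alg2} updates them). Throughout I would use that $K(\cdot,z;y)$ is $(p-L)$-strongly convex and $(p+L)$-smooth in $x$; that $d(\cdot,z)$ is concave and $L_d$-smooth in $y$ with $\nabla_y d(y,z)=\nabla_y K(x(y,z),z;y)$ (Lemma \ref{Lipschitz-dual}); and that $P$ is the Moreau envelope $P(z)=\min_x[\psi(x)+\tfrac p2\|x-z\|^2]$, so $\nabla_z P(z)=p(z-x^*(z))$ with Lipschitz gradient.

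For the primal bracket, the projected-gradient step on the $(p+L)$-smooth function $K(\cdot,z^t;y^t)$ with $c<1/(p+L)$ yields the sufficient decrease $K(x^t,z^t;y^t)-K(x^{t+1},z^t;y^t)\ge \tfrac1{2c}\|x^t-x^{t+1}\|^2$; the subsequent $z$-update contributes, through $x^{t+1}-z^{t+1}=(1-\beta)(x^{t+1}-z^t)$, a gain of $\tfrac p2(2\beta-\beta^2)\|x^{t+1}-z^t\|^2\ge \tfrac{p\beta}{2}\|x^{t+1}-z^t\|^2$; and the $y$-update increases $K$ in $y$, a loss I would pair against the dual ascent. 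For the dual bracket, I would treat the $y$-ascent as approximate projected gradient ascent on the concave $L_d$-smooth $d(\cdot,z^t)$: since the algorithm uses $\nabla_y K(x^{t+1},z^t;y^t)$ rather than the true $\nabla_y d(y^t,z^t)=\nabla_y K(x(y^t,z^t),z^t;y^t)$, the gradient error is bounded by $L\|x^{t+1}-x(y^t,z^t)\|\le L\sigma_3\|x^t-x^{t+1}\|$ via \eqref{eb3}, and after combining with the primal $y$-loss this produces a net positive multiple of $\|y^t-y^t_+(z^t)\|^2$ plus an $O(\alpha L^2\sigma_3^2)\|x^t-x^{t+1}\|^2$ error absorbed by the primal $x$-decrease (exactly why $\alpha<1/(4L\sigma_3^2)$ is imposed).

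The crux is the $z$-direction, where the dual and proximal brackets must be combined. Expanding both to first order in $z^{t+1}-z^t=\beta(x^{t+1}-z^t)$ and using $\nabla_z d(y^{t+1},z^t)=p(z^t-x(y^{t+1},z^t))$ together with $\nabla_z P(z^t)=p(z^t-x^*(z^t))$, the two inner-product contributions collapse to $2p\beta\langle x(y^{t+1},z^t)-x^*(z^t),\,z^t-x^{t+1}\rangle$ (after replacing $y^{t+1}$ by the idealized $y^t_+(z^t)$ at the cost of a lower-order term controlled by \eqref{eb2}). Applying AM-GM to this cross term with the specific splitting weight $24$ gives the lower bound $-24p\beta\|x^*(z^t)-x(y^t_+(z^t),z^t)\|^2-\tfrac{p\beta}{24}\|z^t-x^{t+1}\|^2$, which is where the negative term \eqref{eqn: negative term} is born. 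The surviving positive $z$-coefficient is then $\tfrac{p\beta}{2}-\tfrac{p\beta}{24}$ from the primal gain minus the second-order $O(\beta^2)$ smoothness corrections of $d$ and $P$; these quadratic-in-$\beta$ remainders are dominated once $\beta$ is as small as required in \eqref{eqn: parameters for Alg 2 general case-appe}, leaving at least $\tfrac{p\beta}{8}\|z^t-x^{t+1}\|^2=\tfrac p{8\beta}\|z^t-z^{t+1}\|^2$. Collecting the three pieces then yields exactly \eqref{basic estimate-appe}. I expect the main obstacle to be precisely this $z$-direction bookkeeping: aligning the first-order inner products from $d$ and $P$ so they collapse to a single cross term, choosing the AM-GM weight so the surviving coefficient is exactly $24$, and checking that every $O(\beta^2)$ and gradient-mismatch remainder is genuinely absorbed under the stated smallness conditions on $\alpha$ and $\beta$.
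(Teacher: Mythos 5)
Your proposal is correct, and its skeleton matches the paper's: the same three-bracket decomposition of $\phi^t-\phi^{t+1}$, the same primal sufficient decrease with the $z$-update gain $\tfrac{p}{2\beta}\|z^t-z^{t+1}\|^2$, the same treatment of the $y$-step as inexact projected ascent on $d(\cdot,z^t)$ with gradient mismatch $L\|x^{t+1}-x(y^t,z^t)\|\le L\sigma_3\|x^t-x^{t+1}\|$ via \eqref{eb3}, and the same final passage from $\|y^t-y^{t+1}\|$ to $\|y^t-y^t_+(z^t)\|$ via Lemma \ref{yplus-appe}. Where you genuinely diverge is the $z$-direction. The paper never Taylor-expands $d(y,\cdot)$ or $P$: Lemmas \ref{dual ascent} and \ref{proximal-descent} compute the $z$-increments \emph{exactly}, as quadratic identities of the form $\tfrac{p}{2}(z^{t+1}-z^t)^T(z^{t+1}+z^t-2x(\cdot,\cdot))$, using weak duality \eqref{weak-duality} and introducing the dual maximizer $y(z^{t+1})$; the resulting cross term $2p(z^{t+1}-z^t)^T(x(y(z^{t+1}),z^t)-x(y^{t+1},z^{t+1}))$ is split by AM-GM against $\tfrac{p}{6\beta}\|z^{t+1}-z^t\|^2$, producing a negative term $6p\beta\|x^*(z^{t+1})-x(y^{t+1},z^{t+1})\|^2$ at the \emph{updated} point (after Lemma \ref{eqvl}), which is then relocated to $(y^t_+(z^t),z^t)$ by the four-term split \eqref{spl2} — so the paper's $24$ arises as $6\times 4$. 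You instead use the Moreau-envelope gradients $\nabla_z d(y,z)=p(z-x(y,z))$ and $\nabla_z P(z)=p(z-x^*(z))$, collapse the two first-order contributions into a single cross term already sitting at $z^t$, shift only $y^{t+1}\to y^t_+(z^t)$ via \eqref{eb2} and Lemma \ref{yplus-appe}, and choose the AM-GM weight $24$ directly. This is a valid alternative: it dispenses with $y(z^{t+1})$, Lemma \ref{eqvl}, and the lossy four-term split, but it obliges you to justify that $d(y,\cdot)$ and $P$ have Lipschitz gradients — true here, with constant $p(1+\sigma_1)\le \tfrac{5p}{2}$ since $p>3L$ gives $\sigma_1\le 3/2$ by \eqref{eb1} and \eqref{eb11} — and to absorb second-order remainders of order $p\beta^2\|x^{t+1}-z^t\|^2$, which indeed fit within the slack $\bigl(\tfrac12-\tfrac1{24}-\tfrac18\bigr)p\beta$ once $\beta<1/36$. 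One small bookkeeping remark: in the paper the condition $\alpha<1/(4L\sigma_3^2)$ is used first to keep the $y$-coefficient positive ($\tfrac{1}{2\alpha}-L\sigma_3^2\ge\tfrac{1}{4\alpha}$) and only secondarily to absorb $\kappa^2/(4\alpha)\|x^t-x^{t+1}\|^2$ into the $\tfrac{1}{4c}$ term, whereas you attribute it solely to the latter; both uses are needed, but this does not affect the correctness of your argument.
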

To prove this basic estimate, we need a series of lemmas.
\begin{lemma}[Primal Descent] \label{primal}
For any $t$, we have
\begin{eqnarray}\label{K0}
\nonumber K(x^t, z^t; y^t)-K(x^{t+1}, z^{t+1}; y^{t+1})&\ge& \frac{1}{2c}\|x^t-x^{t+1}\|^2+ \langle\nabla_y K(x^{t+1}, z^t; y^t), y^t - y^{t+1}\rangle\\
&&-\frac{L}{2}\|y^t-y^{t+1}\|^2+\frac{p}{2\beta}\|z^t-z^{t+1}\|^2.
\end{eqnarray}
\end{lemma}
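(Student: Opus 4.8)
The plan is to telescope the one-step change of $K$ across the three updates, treating the $x$-move, the $y$-move, and the $z$-move one at a time by inserting intermediate points:
\begin{align*}
K(x^t,z^t;y^t)-K(x^{t+1},z^{t+1};y^{t+1})
&=\left[K(x^t,z^t;y^t)-K(x^{t+1},z^t;y^t)\right]\\
&\quad+\left[K(x^{t+1},z^t;y^t)-K(x^{t+1},z^t;y^{t+1})\right]\\
&\quad+\left[K(x^{t+1},z^t;y^{t+1})-K(x^{t+1},z^{t+1};y^{t+1})\right].
\end{align*}
I would bound the three bracketed pieces separately and add them, matching the four terms on the right-hand side of the claim.

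For the first piece (the $x$-step), I would set $g(x):=K(x,z^t;y^t)$ and note that $g$ is $(p+L)$-smooth in $x$, since $\nabla_x f$ is $L$-Lipschitz and the proximal term $\tfrac{p}{2}\|x-z^t\|^2$ contributes $pI$ to the Hessian. Because $x^{t+1}=P_X(x^t-c\nabla g(x^t))$, the first-order optimality of the projection, tested at the feasible point $x^t$, yields $\langle\nabla g(x^t),x^t-x^{t+1}\rangle\ge\frac1c\|x^t-x^{t+1}\|^2$. Combining this with the descent lemma for the $(p+L)$-smooth $g$ and then using the step-size condition $c<1/(p+L)$ (which forces $\frac1c-\frac{p+L}{2}\ge\frac1{2c}$) gives the bound $\tfrac{1}{2c}\|x^t-x^{t+1}\|^2$. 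For the second piece (the $y$-step), I would use only smoothness of $f$ in $y$: since $\nabla_y K=\nabla_y f$ is $L$-Lipschitz, the descent inequality in $y$ gives directly $\langle\nabla_yK(x^{t+1},z^t;y^t),\,y^t-y^{t+1}\rangle-\tfrac L2\|y^t-y^{t+1}\|^2$, which is exactly the two middle terms of the claim.

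For the third piece (the $z$-step), only the proximal term of $K$ depends on $z$, so the difference equals $\tfrac p2\|x^{t+1}-z^t\|^2-\tfrac p2\|x^{t+1}-z^{t+1}\|^2$. Writing $a:=x^{t+1}-z^t$ and invoking the averaging update $z^{t+1}-z^t=\beta a$, I get $x^{t+1}-z^{t+1}=(1-\beta)a$, so the piece equals $\tfrac p2\bigl(1-(1-\beta)^2\bigr)\|a\|^2=\tfrac p2\beta(2-\beta)\|a\|^2$. Since $\|z^t-z^{t+1}\|^2=\beta^2\|a\|^2$ and $0<\beta\le1$ gives $2-\beta\ge1$, this is at least $\tfrac{p\beta}{2}\|a\|^2=\tfrac{p}{2\beta}\|z^t-z^{t+1}\|^2$. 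Summing the three bounds produces the stated inequality.

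There is no deep obstacle here; the estimate is a bookkeeping exercise rather than a conceptual one. The two points requiring care are: (a) making the projected-gradient sufficient-decrease constant come out to exactly $1/(2c)$ rather than something weaker, which relies precisely on the threshold $c<1/(p+L)$; and (b) the algebra in the $z$-step that converts the factor $\beta(2-\beta)$ into the claimed coefficient $1/(2\beta)$, where the bound $\beta\le1$ is exactly what is needed. Everything else follows from the descent lemma and the projection optimality condition.
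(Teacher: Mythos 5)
Your proof is correct and follows essentially the same route as the paper's: the identical three-term telescoping through the intermediate points $K(x^{t+1},z^t;y^t)$ and $K(x^{t+1},z^t;y^{t+1})$, with the projected-gradient sufficient decrease under $c<1/(p+L)$ for the $x$-step, the descent lemma in $y$ for the middle terms, and the algebraic identity $\tfrac{p}{2}\beta(2-\beta)\|x^{t+1}-z^t\|^2\ge\tfrac{p}{2\beta}\|z^t-z^{t+1}\|^2$ for the $z$-step. The only difference is that you spell out the details the paper leaves as ``standard'' or ``easy to show,'' and your bookkeeping checks out.
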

\begin{proof}
Notice that the step of updating $x$ is a standard gradient projection, hence we have
\begin{eqnarray}\label{K1}
K(x^t, z^t; y^t)-K(x^{t+1}, z^t; y^t)\ge\frac{1}{2c}\|x^t-x^{t+1}\|^2.
\end{eqnarray}
Next, because $\nabla_yK(x, z;y)$ is $L$-Lipschitz-continuous of $y$, 
we have
\begin{eqnarray}\label{K2}
 K(x^{t+1}, z^t; y^t)-K(x^{t+1}, z^t; y^{t+1})\ge \langle \nabla_y K(x^{t+1}, z^t; y^t), y^t - y^{t+1}\rangle -\frac{L}{2}\|y^t-y^{t+1}\|^2.
\end{eqnarray}
Based on the update of variable $z^{t+1}$, i.e. $z^{t+1}=z^t+\beta(x^{t+1}-z^t)$, it is easy to show that
\begin{eqnarray}\label{K3}
\nonumber K(x^{t+1}, z^t; y^{t+1})-K(x^{t+1}, z^{t+1}; y^{t+1})\ge \frac{p}{2\beta}\|z^t-z^{t+1}\|^2.
\end{eqnarray}
Combining (\ref{K1})-(\ref{K3}), we finish the proof.
\end{proof}

\begin{lemma}[Dual Ascent]\label{dual ascent}
For any $t$, we have
\begin{eqnarray}\label{D0}
\nonumber d(y^{t+1}, z^{t+1})-d(y^t, z^t)
&\ge&  \langle \nabla_yd(y^t, z^t), y^{t+1}-y^t\rangle-\frac{L_d}{2}\|y^t-y^{t+1}\|^2\\
&&+\frac{p}{2}(z^{t+1}-z^t)^T(z^{t+1}+z^t-2x(y^{t+1}, z^{t+1}))\\
&=&  \langle \nabla_y K(x(y^t, z^t),z^t;y^t),y^{t+1}-y^t\rangle-\frac{L_d}{2}\|y^t-y^{t+1}\|^2\\
&&+\frac{p}{2}(z^{t+1}-z^t)^T(z^{t+1}+z^t-2x(y^{t+1}, z^{t+1}))
\end{eqnarray}
\end{lemma}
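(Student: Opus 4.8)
The plan is to split the two-step change into a pure dual-ascent part that moves $y$ from $y^t$ to $y^{t+1}$ while freezing the proximal center at $z^t$, and a pure proximal-shift part that moves $z$ from $z^t$ to $z^{t+1}$ at the already-updated $y^{t+1}$. That is, I would write
$$d(y^{t+1},z^{t+1})-d(y^t,z^t)=\big[d(y^{t+1},z^t)-d(y^t,z^t)\big]+\big[d(y^{t+1},z^{t+1})-d(y^{t+1},z^t)\big],$$
bound the two brackets separately, and recombine. The first bracket will produce the dual-gradient and $L_d$ terms, and the second will produce the proximal quadratic term.

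For the first bracket I would invoke Lemma~\ref{Lipschitz-dual}, which says that $y\mapsto d(y,z^t)$ is differentiable with $L_d$-Lipschitz gradient $\nabla_y d(y,z^t)=\nabla_yK(x(y,z^t),z^t;y)$. The standard quadratic lower bound for a function whose gradient is Lipschitz then gives
$$d(y^{t+1},z^t)-d(y^t,z^t)\ge\langle\nabla_y d(y^t,z^t),\,y^{t+1}-y^t\rangle-\frac{L_d}{2}\|y^t-y^{t+1}\|^2,$$
which already matches the first two terms on the right-hand side of \eqref{D0}.

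The key step is the second bracket, the $z$-shift at the fixed dual iterate $y^{t+1}$. Here I would exploit the suboptimality of plugging the ``wrong'' minimizer into the $z^t$-problem: since $x(y^{t+1},z^t)$ minimizes $K(\cdot,z^t;y^{t+1})$, we have $d(y^{t+1},z^t)=K(x(y^{t+1},z^t),z^t;y^{t+1})\le K(x(y^{t+1},z^{t+1}),z^t;y^{t+1})$, whereas $d(y^{t+1},z^{t+1})=K(x(y^{t+1},z^{t+1}),z^{t+1};y^{t+1})$ holds with equality. Subtracting, and writing $\bar x:=x(y^{t+1},z^{t+1})$, the difference $d(y^{t+1},z^{t+1})-d(y^{t+1},z^t)$ is bounded below by $K(\bar x,z^{t+1};y^{t+1})-K(\bar x,z^t;y^{t+1})$. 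Because the objective at a fixed $x$ depends on $z$ only through the proximal penalty $\tfrac{p}{2}\|x-z\|^2$, the $f(\bar x,y^{t+1})$ contributions cancel, leaving $\tfrac{p}{2}\big(\|\bar x-z^{t+1}\|^2-\|\bar x-z^t\|^2\big)$. Applying the identity $\|a\|^2-\|b\|^2=(a-b)^T(a+b)$ with $a=\bar x-z^{t+1}$ and $b=\bar x-z^t$ yields exactly $\tfrac{p}{2}(z^{t+1}-z^t)^T(z^{t+1}+z^t-2x(y^{t+1},z^{t+1}))$, the third term of \eqref{D0}.

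Adding the two lower bounds gives the first inequality of \eqref{D0}, and the stated equality follows immediately by substituting $\nabla_y d(y^t,z^t)=\nabla_yK(x(y^t,z^t),z^t;y^t)$ from Danskin's theorem (Lemma~\ref{Lipschitz-dual}). I do not expect a genuine obstacle: the only delicate point is the sign bookkeeping in the difference-of-squares expansion, while the conceptual content is entirely the ``insert the common minimizer $\bar x$'' trick that converts an intractable difference of two minimization values into a clean quadratic in $z$.
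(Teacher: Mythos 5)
Your proposal is correct and matches the paper's proof essentially line for line: the same split of $d(y^{t+1},z^{t+1})-d(y^t,z^t)$ into a $y$-move at fixed $z^t$ (handled by the $L_d$-smoothness quadratic lower bound from Lemma~\ref{Lipschitz-dual}) and a $z$-move at fixed $y^{t+1}$ (handled by inserting the common minimizer $x(y^{t+1},z^{t+1})$ and the difference-of-squares identity). The sign bookkeeping and the final substitution $\nabla_y d(y^t,z^t)=\nabla_y K(x(y^t,z^t),z^t;y^t)$ via Danskin's theorem are exactly as in the paper.
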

\begin{proof}
Using Lemma \ref{Lipschitz-dual}, we have
\begin{eqnarray}\label{D1}
\nonumber -d(y^{t+1},z^t)-(-d(y^t,z^t))&\leq& -\langle \nabla_y d(y^t,z^t),y^{t+1}-y^t\rangle+\frac{L_d}{2}\|y^t-y^{t+1}\|^2\\
&=&  \langle \nabla_y K(x(y^t, z^t),z^t;y^t),y^{t+1}-y^t\rangle-\frac{L_d}{2}\|y^t-y^{t+1}\|^2\\
\end{eqnarray}

Next,
\begin{eqnarray}\label{D2}
\nonumber &&d(y^{t+1},z^{t+1})-d(y^{t+1},z^t)\\
\nonumber &=&K(x(y^{t+1},z^{t+1}),z^{t+1};y^{t+1})-K(x(y^{t+1},z^{t}),z^{t};y^{t+1})\\
\nonumber &\geq&K(x(y^{t+1},z^{t+1}),z^{t+1};y^{t+1})-K(x(y^{t+1},z^{t+1}),z^{t};y^{t+1})\\
\nonumber &=& \frac{p}{2}\|x(y^{t+1},z^{t+1})-z^{t+1}\|^2-\frac{p}{2}\|x(y^{t+1},z^{t+1})-z^{t}\|^2\\
 &=& \frac{p}{2}(z^{t+1}-z^t)^T(z^{t+1}+z^t-2x(y^{t+1}, z^{t+1})).
\end{eqnarray}
Finally, using (\ref{D1})-(\ref{D2}) we finish the proof.
\end{proof}
Recall that
$$Y(z)=\{y\in Y\mid  \arg\max_{y\in Y}d(y, z)\}$$
Note that
$$P(z)=d({y}(z), z),$$
for any ${y}(z)\in Y(z)$.
\begin{lemma}[Proximal Descent]\label{proximal-descent}
For any $t\ge 0$, there holds
\begin{equation}\label{eq:prox-descent}
P(z^{t+1})-P(z^{t})\le \frac{p}{2}(z^{t+1}-z^t)^T(z^t+z^{t+1}-2x({y}(z^{t+1}, z^t)),
\end{equation}
where ${y}(z^{t+1})$ is arbitrary $y$ belongs to the set $Y(z^{t+1})$.
\end{lemma}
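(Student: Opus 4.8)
The plan is to exploit the variational identity $P(z)=d(y(z),z)$ for any $y(z)\in Y(z)$ together with the weak-duality inequality $P(z)\ge d(y,z)$, which holds for every $y\in Y$ and was already recorded in the proof of Lemma~\ref{lower-bounded}. Writing $\bar y := y(z^{t+1})\in Y(z^{t+1})$, I would first note the two facts $P(z^{t+1})=d(\bar y, z^{t+1})$ (equality, since $\bar y$ is a maximizer at $z^{t+1}$) and $P(z^{t})\ge d(\bar y, z^{t})$ (weak duality at $z^t$). Subtracting gives the clean upper bound
$$P(z^{t+1})-P(z^{t})\le d(\bar y, z^{t+1})-d(\bar y, z^{t}).$$
The reason for evaluating $d$ at the maximizer $\bar y$ of the \emph{later} iterate is precisely to orient this inequality in the direction we need.

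Next I would control the increment of the dual function in $z$ by replacing the exact minimizer at $z^{t+1}$ with a suboptimal feasible point. Since $x(\bar y, z^{t+1})$ minimizes $K(\cdot, z^{t+1};\bar y)$ over $X$, plugging in the point $x(\bar y, z^{t})$ can only increase the value, so
$$d(\bar y, z^{t+1})=K(x(\bar y, z^{t+1}), z^{t+1};\bar y)\le K(x(\bar y, z^{t}), z^{t+1};\bar y).$$
Subtracting $d(\bar y, z^{t})=K(x(\bar y, z^{t}), z^{t};\bar y)$ from both sides, the $f$-part cancels because it is independent of $z$, and only the proximal term $\frac{p}{2}\|x-z\|^2$ survives. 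Setting $w:=x(\bar y, z^{t})$ and applying the difference-of-squares identity to $\frac{p}{2}\big(\|w-z^{t+1}\|^2-\|w-z^{t}\|^2\big)$ yields exactly $\frac{p}{2}(z^{t+1}-z^{t})^{T}(z^{t+1}+z^{t}-2x(\bar y, z^{t}))$, which is the claimed right-hand side.

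The argument is essentially a two-line computation once the inequalities are oriented, so I anticipate no genuine analytical obstacle; no error-bound machinery or smoothness constant is needed. The only point that requires care—and the sole reason the statement is nontrivial—is combining optimality of $\bar y$ for the \emph{new} center $z^{t+1}$ with sub-optimality of the point $x(\bar y, z^{t})$ for that same center, which is what produces the correct sign throughout. This lemma is the mirror image of Lemma~\ref{dual ascent}: there one used sub-optimality to lower bound $d(y^{t+1},z^{t+1})-d(y^{t+1},z^{t})$, here the same identity is used to upper bound $P(z^{t+1})-P(z^{t})$, and the two resulting quadratic terms in $(z^{t+1}-z^t)$ are designed to combine telescopically when assembling the potential-function descent in Proposition~\ref{basic-estimate-appe}.
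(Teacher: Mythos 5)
Your proposal is correct and follows essentially the same route as the paper's proof: weak duality $P(z^t)\ge d(\bar y,z^t)$, the identity $P(z^{t+1})=d(\bar y,z^{t+1})$ for the maximizer $\bar y=y(z^{t+1})$, suboptimality of the feasible point $x(\bar y,z^t)$ for the new center $z^{t+1}$, and the difference-of-squares computation on the proximal term. The only detail worth flagging is that the identity $P(z)=\max_{y\in Y}d(y,z)$ is not automatic from $\bar y$ being a maximizer of $d(\cdot,z^{t+1})$ but requires the minimax equality $\min_x\max_y K=\max_y\min_x K$, which the paper justifies explicitly (via a minimax theorem, valid here since $K(\cdot,z;y)$ is strongly convex for $p>L$, concave in $y$, and $Y$ is compact and convex).
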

\begin{proof}
In the rest of the Appendix, ${y}(z^{t+1})$ denote any $y$ belongs to the set $Y(z^{t+1})$.
Using Kakutoni's Theorem, we have
$$\min_{x\in X}\max_{y\in Y}K(x, z;y)=\max_{y\in Y}\min_{x\in X}K(x, z;y),$$
which implies
$$\max_{y\in Y}d(y, z)=\min_{x\in X}h(x, z)=P(z).$$
Hence we have
\begin{eqnarray*}
&&P(z^{t+1})-P(z^t)\\
&\stackrel{ \mbox{\scriptsize(i)}}\le&P(z^{t+1})-d({y}(z^{t+1}), z^t)\\
&\stackrel{ \mbox{\scriptsize(ii)}}=&d({y}(z^{t+1}), z^{t+1})-d({y}(z^{t+1}), z^t)\\
&\stackrel{ \mbox{\scriptsize(iii)}}=&K(x({y}(z^{t+1}), z^t), z^{t+1}; {y}(z^{t+1}))-K(x({y}(z^{t+1}), z^t), z^t; {y}(z^{t+1}))\\
&\stackrel{ \mbox{\scriptsize(iv)}}=&\frac{p}{2}(z^{t+1}-z^t)^T(z^{t+1}+z^t-2x({y}(z^{t+1}), z^t)),
\end{eqnarray*}
where (i) and (iii) are because of \eqref{weak-duality}, (ii) is because of the definition of $y(z^{t+1})$ and (iv) is from direct calculation.
\end{proof}

\begin{lemma}\label{eqvl}
$x^*(z)=x(y, z)$ for any $y\in Y(z)$ and $x^*(z)$ is continuous in $z$.
Moreover, we have
$$z=x^*(z)$$
if and only if $z\in X^*$.
\end{lemma}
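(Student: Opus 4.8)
The plan is to dispatch the three assertions in turn, leaning on two structural facts that hold whenever $p>L$: first, both $K(\cdot,z;y)$ and $h(\cdot,z)=\max_{y\in Y}K(\cdot,z;y)$ are $(p-L)$-strongly convex in $x$ (a maximum of strongly convex functions is strongly convex), so that $x(y,z)$ and $x^*(z)$ are \emph{uniquely} defined minimizers; and second, the proximal gradient $p(x-z)$ vanishes precisely at $x=z$, which is what will make the stationarity conditions of $K$ at $x=z$ collapse onto those of $f$.

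For the identity $x^*(z)=x(y,z)$ with $y\in Y(z)$, I would invoke the minimax equality $\max_{y'\in Y}d(y',z)=\min_{x\in X}h(x,z)=P(z)$, already derived via Kakutani's theorem in the proof of Lemma~\ref{proximal-descent}, which gives $d(y,z)=P(z)$. A short sandwich then finishes it: on one side $d(y,z)=\min_xK(x,z;y)\le K(x^*(z),z;y)$, and on the other $K(x^*(z),z;y)\le\max_{y'}K(x^*(z),z;y')=h(x^*(z),z)=P(z)=d(y,z)$, so $K(x^*(z),z;y)=\min_xK(x,z;y)$. Thus $x^*(z)$ attains the minimum of the strongly convex map $K(\cdot,z;y)$, and uniqueness forces $x^*(z)=x(y,z)$. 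The continuity of $x^*(\cdot)$ needs no new work: the estimate \eqref{eb11} of Lemma~\ref{Lp_continue} already gives $\|x^*(z)-x^*(z')\|\le\sigma_1\|z-z'\|$, so $x^*$ is in fact Lipschitz.

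The core is the equivalence $z=x^*(z)\Leftrightarrow z\in X^*$, and here the vanishing of the proximal term at $x=z$ is decisive. For the backward direction, suppose $z\in X^*$ with witness $y^*$ satisfying the stationarity conditions of Definition~\ref{def: eps stationary solution} with $u=v=0$, i.e. $0\in\nabla_xf(z,y^*)+\partial\mathbf{1}_X(z)$ and $0\in-\nabla_yf(z,y^*)+\partial\mathbf{1}_Y(y^*)$. Since $\nabla_xK(z,z;y^*)=\nabla_xf(z,y^*)+p(z-z)=\nabla_xf(z,y^*)$, the first condition is exactly the (sufficient, by strong convexity) optimality condition for $z=\arg\min_xK(x,z;y^*)$, and since $K(z,z;\cdot)=f(z,\cdot)+\text{const}$, the second is the (sufficient, by concavity) condition for $y^*=\arg\max_yK(z,z;y)$; hence $(z,y^*)$ is a saddle point of $K(\cdot,z;\cdot)$, and the $x$-component of any saddle point is the unique minimizer of $h(\cdot,z)$, giving $z=x^*(z)$. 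For the forward direction, suppose $z=x^*(z)$ and pick any $y^*\in Y(z)$; the classical saddle-point characterization together with the minimax equality shows $(z,y^*)$ is a saddle point of $K(\cdot,z;\cdot)$, so $z=x(y^*,z)$ and $y^*=\arg\max_yK(z,z;y)$. Reading off the corresponding first-order conditions and again using $p(z-z)=0$ reverses the computation above, yielding $0\in\nabla_xf(z,y^*)+\partial\mathbf{1}_X(z)$ and $0\in-\nabla_yf(z,y^*)+\partial\mathbf{1}_Y(y^*)$, i.e. $(z,y^*)$ is stationary and $z\in X^*$.

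I expect the main obstacle to be the bookkeeping in this last equivalence rather than any deep estimate: one must keep straight that the subdifferential conditions are used in the \emph{necessary} direction when extracting stationarity from a saddle point, and in the \emph{sufficient} direction (invoking strong convexity in $x$ and concavity in $y$) when building the saddle point from stationarity, and one must confirm that $\arg\max_yK(z,z;y)$ coincides with $Y(z)$ through the minimax equality. The single algebraic simplification $p(x-z)=0$ at $x=z$ is what aligns both directions.
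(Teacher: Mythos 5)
Your proof is correct, and there is in fact nothing in the paper to compare it against: Lemma \ref{eqvl} is stated bare in the appendix with no proof (presumably deferred to standard saddle-point facts or to \cite{zhang2020proximal}), so your write-up supplies an argument the paper omits. What you do is the natural route, and it uses exactly the tools the paper already has on hand: the minimax equality $\max_{y\in Y}d(y,z)=\min_{x\in X}h(x,z)=P(z)$ obtained via Kakutani's theorem in the proof of Lemma \ref{proximal-descent}; the $(p-L)$-strong convexity of $K(\cdot,z;y)$, and of $h(\cdot,z)$ as a pointwise maximum of $(p-L)$-strongly convex functions, which gives the uniqueness that turns ``$x^*(z)$ attains $\min_x K(x,z;y)$'' into $x^*(z)=x(y,z)$; the Lipschitz bound \eqref{eb11} of Lemma \ref{Lp_continue}, which makes continuity of $x^*(\cdot)$ immediate; and the cancellation $p(z-z)=0$, which is indeed the whole content of the equivalence $z=x^*(z)\Leftrightarrow z\in X^*$, since it identifies the first-order conditions of $K(\cdot,z;\cdot)$ at $(z,y^*)$ with those of $f$. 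Your bookkeeping on the logical direction of the optimality conditions is right: necessity when reading stationarity off a saddle point, sufficiency (strong convexity in $x$, concavity in $y$) when assembling the saddle point from stationarity. Two one-line points would complete a fully written-out version: $Y(z)\neq\emptyset$ because $Y$ is compact and $d(\cdot,z)$ is continuous (Lemma \ref{Lipschitz-dual}), and in the forward direction the chain $K(x^*(z),z;y^*)\le h(x^*(z),z)=P(z)=d(y^*,z)\le K(x^*(z),z;y^*)$ collapses to equalities, simultaneously giving $z=x(y^*,z)$ and $y^*\in\arg\max_{y\in Y}K(z,z;y)$ --- which settles the coincidence of $Y(z)$ with the maximizers of $K(z,z;\cdot)$ that you flagged as the remaining check.
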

We define
$$y_+(z)=P_Y(y+\alpha \nabla_yK(x(y, z), z;y)).$$
Then we have
\begin{lemma}\label{yplus-appe}
We have
$$\|y^{t+1}-y^t_+(z^t)\|\le \kappa\|x^t-x^{t+1}\|,$$
where $\kappa=\alpha L\sigma_3$.
\end{lemma}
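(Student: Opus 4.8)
The plan is to exploit the fact that $y^{t+1}$ and $y^t_+(z^t)$ are projections of two points that differ only in the $x$-argument fed to $\nabla_y K$, and then transfer the resulting bound to the primal residual $\|x^t - x^{t+1}\|$ via the error bound \eqref{eb3}. Concretely, by the dual update in Algorithm \ref{Alg2} we have $y^{t+1} = P_Y(y^t + \alpha \nabla_y K(x^{t+1}, z^t; y^t))$, whereas by the definition of $y_+(z)$ we have $y^t_+(z^t) = P_Y(y^t + \alpha \nabla_y K(x(y^t, z^t), z^t; y^t))$. The two expressions share the identical base point $y^t$ and step size $\alpha$, so the entire discrepancy is funneled through the gap between $x^{t+1}$ and the exact inner minimizer $x(y^t, z^t)$.

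First I would invoke the nonexpansiveness of the Euclidean projection onto the closed convex set $Y$, which immediately gives
\[
\|y^{t+1} - y^t_+(z^t)\| \le \alpha \|\nabla_y K(x^{t+1}, z^t; y^t) - \nabla_y K(x(y^t, z^t), z^t; y^t)\|.
\]
Next I would note that the proximal term $\frac{p}{2}\|x - z\|^2$ does not depend on $y$, so $\nabla_y K(x, z; y) = \nabla_y f(x, y)$; consequently the right-hand side involves only $\nabla_y f$ evaluated at the two primal points $x^{t+1}$ and $x(y^t, z^t)$ against the common dual point $y^t$. Applying the $L$-Lipschitz continuity of $\nabla_y f$ in its first argument (Assumption \ref{basic-ass}) yields
\[
\|\nabla_y K(x^{t+1}, z^t; y^t) - \nabla_y K(x(y^t, z^t), z^t; y^t)\| \le L \|x^{t+1} - x(y^t, z^t)\|.
\]

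Finally, I would close the chain with the primal error bound \eqref{eb3} from Lemma \ref{Lp_continue}, namely $\|x^{t+1} - x(y^t, z^t)\| \le \sigma_3 \|x^t - x^{t+1}\|$, to conclude that $\|y^{t+1} - y^t_+(z^t)\| \le \alpha L \sigma_3 \|x^t - x^{t+1}\|$, which is exactly the claim with $\kappa = \alpha L \sigma_3$. This is a routine chaining of Lipschitz estimates, so I do not anticipate a genuine obstacle; the only point deserving care is recognizing that the two projected points agree in their base point and step, so that all the error is captured by the one-step primal inexactness $x^{t+1}$ versus $x(y^t, z^t)$, which is precisely the quantity controlled by \eqref{eb3}.
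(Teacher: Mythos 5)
Your proposal is correct and follows exactly the paper's own argument: nonexpansiveness of $P_Y$, the $L$-Lipschitz continuity of $\nabla_y K$ (equivalently $\nabla_y f$, since the proximal term is $y$-independent) in the primal argument, and the error bound \eqref{eb3} chained together to give $\kappa = \alpha L \sigma_3$. No differences worth noting.
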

\begin{proof}
By the nonexpansiveness of the projection operator, we have
\begin{eqnarray*}
\|y^{t+1}-y^t_+(z^t)\|&=&\|P_Y(y^t+\alpha \nabla_yK(x(y^t, z^t), z^t;y^t))-P_Y(y^t+\alpha \nabla_yK(x^{t+1}, z^t;y^t))\|\\
&\le&\|(y^t+\alpha \nabla_yK(x(y^t, z^t), z^t;y^t))-(y^t+\alpha\nabla_yK(x^{t+1}, z^t; y^t))\|\\
&\le&\alpha L\|x^{t+1}-x(y^t, z^t)\|\\
&\le&\alpha L\sigma_3\|x^t-x^{t+1}\|,
\end{eqnarray*}
where the first inequality is due to the nonexpansiveness of the projection operator, the second is because of the Lipschitz-continuity of $\nabla_yK$ and the last inequality is because of \eqref{eb3}.
\end{proof}

Now we can  prove Proposition \ref{basic-estimate-appe}.

\begin{proof}
Using the three descent lemma, we have
\begin{eqnarray}\label{the_eqn1}
\nonumber&&\Phi^t-\Phi^{t+1}\\
\nonumber&\ge &\frac{1}{2c}\|x^t-x^{t+1}\|^2+ \langle\nabla_y K(x^{t+1}, z^t; y^t), y^t - y^{t+1}\rangle
-\frac{L}{2}\|y^t-y^{t+1}\|^2+\frac{p}{2\beta}\|z^t-z^{t+1}\|^2\\
\nonumber&&+2\langle \nabla_y K(x(y^t, z^t),z^t;y^t),y^{t+1}-y^t\rangle-\frac{2L_d}{2}\|y^t-y^{t+1}\|^2
+p(z^{t+1}-z^t)^T(z^{t+1}+z^t-2x(y^{t+1}, z^{t+1}))\\
\nonumber&&-p(z^{t+1}-z^t)^T(z^t+z^{t+1}-2x({y}(z^{t+1}), z^t))\\
\nonumber&=&\frac{1}{2c}\|x^t-x^{t+1}\|^2-\frac{L+2L_d}{2}\|y^t-y^{t+1}\|^2+\frac{p}{2\beta}\|z^t-z^{t+1}\|^2 +\langle \nabla_y K(x^{t+1},z^t;y^t),y^{t+1}-y^t\rangle \\
&&+2\langle \nabla_yK(x(y^t, z^t), z^t ;y^t)-\nabla_yK(x^{t+1}, z^t; y^t), y^{t+1}-y^t\rangle\\
&&+2p(z^{t+1}-z^t)^T(x({y}(z^{t+1}), z^{t})-x(y^{t+1}, z^{t+1})).
\end{eqnarray}
Using the property of the projection operator and the update of the dual variable $y^{t}$, we have
\begin{eqnarray*}
\langle \nabla_y K(x^{t+1},z^t;y^t),y^{t+1}-y^t\rangle \geq \frac{1}{\alpha}\|y^t-y^{t+1}\|^2.
\end{eqnarray*}
Substituting the above inequality into (\ref{the_eqn1}),  we get
\begin{eqnarray*}
&&\Phi^t-\Phi^{t+1}\\
&\ge&\frac{1}{2c}\|x^t-x^{t+1}\|^2+(\frac{1}{\alpha}-\frac{L+2L_d}{2})\|y^t-y^{t+1}\|^2+\frac{p}{2\beta}\|z^t-z^{t+1}\|^2\\
&&+2\langle \nabla_yK(x(y^t, z^t), z^t ;y^t)-\nabla_yK(x^{t+1}, z^t; y^t), y^{t+1}-y^t\rangle\\
&&+2p(z^{t+1}-z^t)^T(x({y}(z^{t+1}), z^{t})-x(y^{t+1}, z^{t+1}))\\
&\ge&\frac{1}{2c}\|x^t-x^{t+1}\|^2+(\frac{1}{\alpha}-\frac{L+10L}{2})\|y^t-y^{t+1}\|^2+\frac{p}{2\beta}\|z^t-z^{t+1}\|^2\\
&&+2\langle \nabla_yK(x(y^t, z^t), z^t ;y^t)-\nabla_yK(x^{t+1}, z^t; y^t), y^{t+1}-y^t\rangle\\
&&+2p(z^{t+1}-z^t)^T(x({y}(z^{t+1}), z^{t})-x(y^{t+1}, z^{t+1}))\\
&\ge&\frac{1}{2c}\|x^t-x^{t+1}\|^2+\frac{1}{2\alpha}\|y^t-y^{t+1}\|^2+\frac{p}{2\beta}\|z^t-z^{t+1}\|^2\\
&&+2\langle \nabla_yK(x(y^t, z^t), z^t ;y^t)-\nabla_yK(x^{t+1}, z^t; y^t), y^{t+1}-y^t\rangle\\
&&+2p(z^{t+1}-z^t)^T(x({y}(z^{t+1}), z^{t})-x(y^{t+1}, z^{t+1}))
\end{eqnarray*}
where the second inequality is because of \eqref{5L} and the last inequality is because $\alpha\leq \frac{1}{11L}$.

Notice that
\begin{eqnarray*}
&&2p(z^{t+1}-z^t)^T(x({y}(z^{t+1}), z^t)-x(y^{t+1}, z^{t+1}))\\
&=&2p(z^{t+1}-z^t)^T((x({y}(z^{t+1}), z^t)-x({y}(z^{t+1}), z^{t+1}))+(x({y}(z^{t+1})), z^{t+1})-x(y^{t+1}, z^{t+1})))\\
&=&2p(z^{t+1}-z^t)^T(x({y}(z^{t+1}), z^t)-x({y}(z^{t+1}), z^{t+1}))\\
&&+2p(z^{t+1}-z^t)^T(x({y}(z^{t+1})), z^{t+1})-x(y^{t+1}, z^{t+1}))\\
&\stackrel{ \mbox{\scriptsize(i)}}\ge&-2p\sigma_1\|z^{t+1}-z^t\|^2+2p(z^{t+1}-z^t)^T(x({y}(z^{t+1})), z^{t+1})-x(y^{t+1}, z^{t+1}))\\
&\stackrel{ \mbox{\scriptsize(ii)}}\ge&-2p\sigma_1\|z^{t+1}-z^t\|^2-\frac{p}{6\beta}\|z^{t+1}-z^t\|^2-6p\beta\|x({y}(z^{t+1}), z^{t+1})-x(y^{t+1}, z^{t+1})\|^2,
\end{eqnarray*}
where (i) is because of the Cauchy-Schwarz inequality and Lemma \ref{Lp_continue} and (ii) is due to the AM-GM inequality.
Also we have
\begin{eqnarray*}
&&2\langle \nabla_yK(x(y^t, z^t), z^t ;y^t)-\nabla_yK(x^{t+1}, z^t; y^t), y^{t+1}-y^t\rangle\\
&\ge&-2\|\nabla_yK(x(y^t, z^t), z^t ;y^t)-\nabla_yK(x^{t+1}, z^t; y^t)\|\cdot\| y^{t+1}-y^t\|\\
&\ge&-2L\|x^{t+1}-x(y^t, z^t)\|\cdot\|y^t-y^{t+1}\|\\
&\ge&-L\sigma_3^2\|y^t-y^{t+1}\|^2-L\sigma_3^{-2}\|x^{t+1}-x(y^t, z^t)\|^2\\
&\ge&-L\sigma_3^2\|y^t-y^{t+1}\|^2-L\|x^{t+1}-x^t\|^2,
\end{eqnarray*}
where the first inequality is because of the Cauchy-Schwarz in equality, the second inequality is because $\nabla_yK=\nabla_yf$ is $L$-Lipschitz-continuous, the third inequality is due to the AM-GM inequality and the last is because of \eqref{eb3}.

Hence we have
\begin{eqnarray}
\nonumber&&\Phi^t-\Phi^{t+1}\\
\nonumber&\ge&(\frac{1}{2c}-L)\|x^t-x^{t+1}\|^2+(\frac{1}{2\alpha}-L\sigma_3^2)\|y^t-y^{t+1}\|^2
+(\frac{p}{2\beta}-2p\sigma_1-\frac{p}{6\beta})\|z^t-z^{t+1}\|^2\\
\nonumber&&-6p\beta\|x({y}(z^{t+1}), z^{t+1})-x(y^{t+1}, z^{t+1})\|^2
\end{eqnarray}
By the conditions of $p, c$, we have
$$1/2c-L\ge 1/4c.$$
By the condition for $\alpha$, we have
$$\alpha<1/(4L\sigma_3^2),$$
which yields
$$1/(2\alpha)-L\sigma_3^2\ge 1/(4\alpha)$$
And by the conditions that $\beta<\frac{1}{36}$ and $p\ge 3L$ together with the definition of $\sigma_1$,
$$
\frac{p}{2\beta}-2p\sigma_1-\frac{p}{6\beta}\geq\frac{p}{4\beta}.
$$
Then we have
\begin{eqnarray}\label{original-estimate}
\nonumber&&\Phi^t-\Phi^{t+1}\\
\nonumber&\ge&\frac{1}{4c}\|x^t-x^{t+1}\|^2+\frac{1}{4\alpha}\|y^t-y^{t+1}\|^2
+\frac{p}{4\beta}\|z^t-z^{t+1}\|^2\\
\label{Phiphi}&&-6p\beta\|x({y}(z^{t+1}), z^{t+1})-x(y^{t+1}, z^{t+1})\|^2\\
\nonumber&=&\frac{1}{4c}\|x^t-x^{t+1}\|^2+\frac{1}{4\alpha}\|y^t-y^{t+1}\|^2
+\frac{p}{4\beta}\|z^t-z^{t+1}\|^2\\
&&-6p\beta\|x^*(z^{t+1})-x(y^{t+1}, z^{t+1})\|^2,
\end{eqnarray}
where  the last equality is because of Lemma \ref{eqvl}.
By Lemma \ref{yplus-appe} and the convexity of the norm square function, we have
\begin{eqnarray}
\|y^{t+1}-y^t\|^2&=&\|(y^{t+1}-y^t_+(z^t))+(y^t_+(z^t)-y^t)\|^2\\
&\ge&\|y^t-y^t_+(z^t)\|^2/2-\|y^{t+1}-y^t_+(z^t)\|^2\\
&\ge&\|y^t-y^t_+(z^t)\|^2/2-\kappa^2\|x^t-x^{t+1}\|^2.\label{spl1}
\end{eqnarray}
Similarly, by  Lemma \ref{yplus-appe},   \eqref{eb2} and the convexity of norm square function, we have
\begin{eqnarray}
&&\|x^*(z^{t+1})-x(y^{t+1}, z^{t+1})\|^2\\
&=&\|(x^*(z^{t+1})-x^*(z^{t}))+(x^*(z^t)-x(y^t_+(z^t), z^t))\\
&&+(x(y^t_+(z^t), z^t)-x(y^{t+1}, z^t))+(x(y^{t+1}, z^t)-x(y^{t+1}, z^{t+1}))\|^2\\
&\le&4\|x^*(z^{t+1})-x^*(z^{t})\|^2+4\|x^*(z^t)-x(y^t_+(z^t), z^t)\|^2\\
&&+4\|x(y^t_+(z^t), z^t)-x(y^{t+1}, z^t)\|^2+4\|x(y^{t+1}, z^t)-x(y^{t+1}, z^{t+1})\|^2\\
&\le&4\sigma_1^2\|z^t-z^{t+1}\|^2+4\|x^*(z^t)-x(y^t_+(z^t), z^t)\|^2\\
&&+4\sigma_2^2\kappa^2\|x^t-x^{t+1}\|^2+4\sigma_1^2\|z^t-z^{t+1}\|^2\\
&=&8\sigma_1^2\|z^t-z^{t+1}\|^2+4\|x^*(z^t)-x(y^t_+(z^t), z^t)\|^2\\
&&+4\sigma_2^2\kappa^2\|x^t-x^{t+1}\|^2.\label{spl2}
\end{eqnarray}
Substituting \eqref{spl1} and \eqref{spl2} to \eqref{original-estimate} yields
\begin{eqnarray*}
&&\phi^t-\phi^{t+1}\\
\nonumber&\ge&(\frac{1}{4c}-24p\beta\sigma_2^2\kappa^2-\kappa^2/(4\alpha))\|x^t-x^{t+1}\|^2\\
&&+\frac{1}{8\alpha}\|y^t-y^t_+(z^t)\|^2+(\frac{p}{4\beta}-48p\beta\sigma_1^2)\|z^t-z^{t+1}\|^2\\
&&-24p\beta\|x^*(z^{t})-x(y^t_+(z^t), z^{t})\|^2.
\end{eqnarray*}
Notice that
$$\alpha<1/(4L\sigma_3^2)<1/(4cL^2\sigma_3^2),$$
and hence $\kappa^2/(4\alpha)=\alpha^2L^2\sigma_3^2/(4\alpha)<1/(16c)$.
Also we have
$$\beta<1/(96p\alpha\sigma_2^2),$$
thus
$$24p\beta\sigma_2^2\kappa^2<\kappa^2/(4\alpha)\le 1/(16c).$$
Consequently, we have
$$(\frac{1}{4c}-24p\beta\sigma_2^2\kappa^2-\kappa^2/(4\alpha))>1/(8c).$$
By the definition of $\sigma_1$ and the conditions $p\ge 3L, \beta<\frac{1}{36}$, we have
$$(\frac{p}{4\beta}-48p\beta\sigma_1^2)\ge \frac{p}{8\beta}.$$
Combining the above, we have
\begin{eqnarray*}
&&\phi^t-\phi^{t+1}\\
\nonumber&\ge&\frac{1}{8c}\|x^t-x^{t+1}\|^2\\
&&+\frac{1}{8\alpha}\|y^t-y^t_+(z^t)\|^2+\frac{p}{8\beta}\|z^t-z^{t+1}\|^2\\
&&-24p\beta\|x^*(z^{t})-x(y^t_+(z^t), z^{t})\|^2,
\end{eqnarray*}
which finishes the proof.
\end{proof}

\subsection{General nonconvex-concave case}

We have the following error bound:
\begin{lemma}\label{weak error bound-appe}
We have
\begin{eqnarray*}
\alpha (p-L)\|x^*(z)-x(y_+(z), z)\|^2
&<& (1+\alpha L + \alpha L \sigma_2) \|y-y_+(z)\|\cdot \mathrm{dist}(y_+(z), Y(z)).\\
&\le&(1+\alpha L + \alpha L \sigma_2)\|y-y_+(z)\|\cdot D(Y),
\end{eqnarray*}
where $D(Y)$ is the diameter of $Y$.
\end{lemma}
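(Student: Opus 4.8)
The plan is to reduce the claimed inequality to an estimate on the dual suboptimality gap of $y_+(z)$, which is natural since $y_+(z)$ is exactly one projected gradient-ascent step on the concave dual $d(\cdot,z)$. Write $x_+ = x(y_+(z),z)$ and let $\bar y$ be the metric projection of $y_+(z)$ onto the closed convex maximizer set $Y(z)$, so that $\|\bar y - y_+(z)\| = \mathrm{dist}(y_+(z),Y(z))$ and, by Lemma \ref{eqvl}, $x^*(z) = x(\bar y, z)$. First I would use that $K(\cdot, z; y_+(z))$ is $(p-L)$-strongly convex with minimizer $x_+$, which gives $\tfrac{p-L}{2}\|x^*(z) - x_+\|^2 \le K(x^*(z), z; y_+(z)) - d(y_+(z), z)$. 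Next, by the minimax equality used in the proof of Lemma \ref{proximal-descent} together with Lemma \ref{eqvl}, the pair $(x^*(z), \bar y)$ is a saddle point of $K(\cdot, z; \cdot)$, so $\bar y$ maximizes $K(x^*(z), z; \cdot)$ and hence $K(x^*(z), z; y_+(z)) \le K(x^*(z), z; \bar y) = P(z) = d(\bar y, z)$. Combining these, $\tfrac{p-L}{2}\|x^*(z) - x_+\|^2 \le d(\bar y, z) - d(y_+(z), z)$, so the problem is reduced to bounding the dual gap $d(\bar y, z) - d(y_+(z), z)$.

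The second step bounds this dual gap. Since $d(\cdot, z)$ is concave and differentiable with $\nabla_y d(y,z) = \nabla_y K(x(y,z), z; y)$ (Lemma \ref{Lipschitz-dual}), concavity evaluated at $y_+(z)$ gives $d(\bar y, z) - d(y_+(z), z) \le \langle \nabla_y d(y_+(z), z), \bar y - y_+(z)\rangle$. I would then split $\nabla_y d(y_+(z), z) = \nabla_y d(y, z) + \bigl(\nabla_y d(y_+(z), z) - \nabla_y d(y, z)\bigr)$. For the first piece, the variational inequality characterizing $y_+(z) = P_Y(y + \alpha \nabla_y d(y, z))$, namely $\langle y + \alpha \nabla_y d(y,z) - y_+(z),\, \bar y - y_+(z)\rangle \le 0$, yields $\langle \nabla_y d(y, z), \bar y - y_+(z)\rangle \le \tfrac{1}{\alpha}\|y - y_+(z)\|\,\|\bar y - y_+(z)\|$. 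For the second piece, the $L_d$-Lipschitz continuity of $\nabla_y d(\cdot, z)$ (Lemma \ref{Lipschitz-dual}) and Cauchy--Schwarz give $\langle \nabla_y d(y_+(z), z) - \nabla_y d(y, z),\, \bar y - y_+(z)\rangle \le L_d \|y - y_+(z)\|\,\|\bar y - y_+(z)\|$. Since $L_d = L + L\sigma_2$, adding these produces $d(\bar y, z) - d(y_+(z), z) \le \tfrac{1 + \alpha L + \alpha L\sigma_2}{\alpha}\,\|y - y_+(z)\|\cdot \mathrm{dist}(y_+(z), Y(z))$.

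Assembling the two steps and clearing the factor $\alpha$, then bounding $\mathrm{dist}(y_+(z), Y(z)) \le D(Y)$ since $y_+(z), \bar y \in Y$, yields the claimed inequality of the displayed form (the exact leading constant being read off from the strong-convexity modulus $p-L$ and the dual-gap bound above). The main obstacle is the dual-gap estimate: the projected gradient step uses the gradient at $y$, whereas the concavity inequality naturally evaluates the gradient at $y_+(z)$, so the whole argument hinges on coupling the projection's variational inequality with the smoothness of $d(\cdot, z)$ to absorb this mismatch into the $\|y - y_+(z)\|$ factor. A secondary point requiring care is establishing the saddle-point property of $(x^*(z), \bar y)$ and selecting $\bar y$ as the projection onto $Y(z)$ so that $\|\bar y - y_+(z)\|$ is precisely the distance; the remaining manipulations are routine uses of strong convexity, concavity, and Cauchy--Schwarz.
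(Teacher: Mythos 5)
Your argument is correct in substance but takes a genuinely different route from the paper's. You pass through the dual function: strong convexity of $K(\cdot,z;y_+(z))$ converts the primal distance into the dual suboptimality gap $d(\bar y,z)-d(y_+(z),z)$ (your saddle-point step for $(x^*(z),\bar y)$ is valid, via the minimax equality invoked in Lemma \ref{proximal-descent} and Lemma \ref{eqvl}), and you then bound that gap by a standard one-step projected-gradient estimate on the concave, $L_d$-smooth function $d(\cdot,z)$: concavity at $y_+(z)$, the projection variational inequality to handle the $\nabla_y d(y,z)$ part, and $L_d$-Lipschitzness (Lemma \ref{Lipschitz-dual}) to absorb the mismatch between $y$ and $y_+(z)$. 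The paper never forms the dual gap: it observes that $y_+(z)$ \emph{exactly} maximizes the perturbed concave objective $\bar y\mapsto \alpha K(x(y_+(z),z),z;\bar y)-\delta^T\bar y$ over $Y$, with $\|\delta\|\le(1+\alpha L+\alpha L\sigma_2)\|y-y_+(z)\|$, and combines this with \emph{two} strong-convexity inequalities (at $y_+(z)$ with minimizer $x(y_+(z),z)$, and at $y(z)$ with minimizer $x^*(z)$) plus the maximality of $y(z)$ for $K(x^*(z),z;\cdot)$. Both routes produce the same right-hand side constant; your exchange of the perturbed-argmax trick for the VI-plus-smoothness estimate is arguably more transparent and reusable.

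The one concrete discrepancy is the leading constant: using only one strong-convexity inequality, your chain yields
\begin{equation*}
\frac{\alpha(p-L)}{2}\,\|x^*(z)-x(y_+(z),z)\|^2\le(1+\alpha L+\alpha L\sigma_2)\,\|y-y_+(z)\|\cdot\mathrm{dist}(y_+(z),Y(z)),
\end{equation*}
a factor $2$ weaker than the stated $\alpha(p-L)$. This is immaterial downstream (the lemma enters Theorem \ref{general} and Lemma \ref{dual error bound-appe} only through unspecified constants such as $\lambda_1$ and $\sigma_5$), and it is repaired by a one-line change: add the second strong-convexity inequality $K(x(y_+(z),z),z;\bar y)-K(x^*(z),z;\bar y)\ge\frac{p-L}{2}\|x(y_+(z),z)-x^*(z)\|^2$ (valid since $x^*(z)=x(\bar y,z)$ minimizes $K(\cdot,z;\bar y)$), use $K(x^*(z),z;y_+(z))\le K(x^*(z),z;\bar y)$ to obtain $(p-L)\|x(y_+(z),z)-x^*(z)\|^2\le K(x(y_+(z),z),z;\bar y)-K(x(y_+(z),z),z;y_+(z))\le\langle\nabla_y K(x(y_+(z),z),z;y_+(z)),\,\bar y-y_+(z)\rangle$, and note that by Danskin's theorem this gradient equals $\nabla_y d(y_+(z),z)$, so your step-two estimate applies verbatim and restores the full modulus $p-L$.
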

The proof will be given in the \cref{Appendix: error bound}.
\begin{lemma}\label{trivial1}
If
\begin{equation}\label{key4-appe}
\max\{\|x^t-x^{t+1}\|, \|y^t-y^t_+(z^t)\|, \|z^t-x^{t+1}\|\}\le \epsilon,
\end{equation}
then $(x^{t+1}, y^{t+1})$ is a $\bar{\lambda}\epsilon-$ solution for some $\bar{\lambda}>0$.
\end{lemma}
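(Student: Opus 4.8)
The plan is to exhibit, directly from the first-order optimality conditions of the two projection steps, an explicit pair $(u,v)$ certifying that $(x^{t+1},y^{t+1})$ is an $\bar\lambda\epsilon$-stationary solution in the sense of Definition~\ref{def: eps stationary solution}, and then to bound $\|u\|$ and $\|v\|$ by a constant multiple of $\epsilon$ using the hypothesis \eqref{key4-appe}, the $L$-Lipschitz continuity of the gradients from Assumption~\ref{basic-ass}, and Lemma~\ref{yplus-appe}.

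First I would handle the primal coordinate. Since $x^{t+1}=P_X(x^t-c\nabla_xK(x^t,z^t;y^t))$ and $\nabla_xK(x,z;y)=\nabla_xf(x,y)+p(x-z)$, the optimality condition of the projection gives
$$\tfrac{1}{c}(x^t-x^{t+1})-\nabla_xf(x^t,y^t)-p(x^t-z^t)\in\partial\mathbf{1}_X(x^{t+1}).$$
I would then define
$$u=\nabla_xf(x^{t+1},y^{t+1})+\tfrac{1}{c}(x^t-x^{t+1})-\nabla_xf(x^t,y^t)-p(x^t-z^t),$$
so that $u\in\nabla_xf(x^{t+1},y^{t+1})+\partial\mathbf{1}_X(x^{t+1})$ by construction. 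Bounding $\|u\|$ then reduces to three estimates: the gradient-difference term is controlled by $L(\|x^{t+1}-x^t\|+\|y^{t+1}-y^t\|)$ via Lipschitz continuity, the term $\tfrac{1}{c}\|x^t-x^{t+1}\|$ is directly $\le\epsilon/c$, and the proximal term needs $\|x^t-z^t\|\le\|x^t-x^{t+1}\|+\|x^{t+1}-z^t\|\le2\epsilon$, which is precisely why \eqref{key4-appe} must control the residual $\|z^t-x^{t+1}\|$ as well.

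The symmetric argument for the dual coordinate uses $y^{t+1}=P_Y(y^t+\alpha\nabla_yK(x^{t+1},z^t;y^t))$ with $\nabla_yK=\nabla_yf$, giving
$$\tfrac{1}{\alpha}(y^t-y^{t+1})+\nabla_yf(x^{t+1},y^t)\in\partial\mathbf{1}_Y(y^{t+1}),$$
from which I set $v=-\nabla_yf(x^{t+1},y^{t+1})+\tfrac{1}{\alpha}(y^t-y^{t+1})+\nabla_yf(x^{t+1},y^t)$, so that $v\in-\nabla_yf(x^{t+1},y^{t+1})+\partial\mathbf{1}_Y(y^{t+1})$, and $\|v\|\le(L+1/\alpha)\|y^t-y^{t+1}\|$ by Lipschitz continuity in $y$.

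Both bounds hinge on controlling $\|y^{t+1}-y^t\|$, which is the one quantity not appearing directly in \eqref{key4-appe}. Here I would invoke Lemma~\ref{yplus-appe}, namely $\|y^{t+1}-y^t_+(z^t)\|\le\kappa\|x^t-x^{t+1}\|$, together with the triangle inequality and the hypothesis $\|y^t-y^t_+(z^t)\|\le\epsilon$ to obtain $\|y^{t+1}-y^t\|\le(\kappa+1)\epsilon$. Substituting all estimates yields $\|u\|\le\bar\lambda_1\epsilon$ and $\|v\|\le\bar\lambda_2\epsilon$ for explicit constants depending only on $L,c,\alpha,p,\kappa$, and taking $\bar\lambda=\max\{\bar\lambda_1,\bar\lambda_2\}$ finishes the proof. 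The only mildly delicate point — and the main thing to get right — is that stationarity is posed in terms of $f$ while the algorithm differentiates the smoothed function $K$; the discrepancy is exactly the proximal term $p(x^t-z^t)$, which must be absorbed using both residuals $\|x^t-x^{t+1}\|$ and $\|z^t-x^{t+1}\|$ rather than a single one.
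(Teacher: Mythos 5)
Your proposal is correct and follows essentially the same route as the paper's proof: write the first-order optimality conditions of the two projection steps, define $u$ and $v$ as the explicit gradient-correction terms, bound the dual displacement $\|y^{t+1}-y^t\|$ via Lemma~\ref{yplus-appe} and the triangle inequality, and absorb the proximal term $p(x^t-z^t)$ using both residuals $\|x^t-x^{t+1}\|$ and $\|x^{t+1}-z^t\|$. The only cosmetic difference is that the paper writes the proximal term as $p(x^{t+1}-z^t)$ where you write $p(x^t-z^t)$; these differ by $p(x^t-x^{t+1})$, which is controlled by the same hypothesis, so the resulting constants agree up to bookkeeping.
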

\begin{proof}

By the update of $x^{t+1}$,  we have
$$x^{t+1}=\arg\min_{x\in X}\{\langle \nabla_xf(x^t, y^t)+p(x^t-z^t), x-x^t\rangle+\frac{1}{2c}\|x-x^t\|^2+\partial {\mathbf{1}_X(x)}\}.$$
Therefore, we have
\begin{equation}\label{trivial11}
0\in \nabla_xf(x^{t}, y^t)+p(x^{t+1}-z^t)+\frac{1}{2c}(x^{t+1}-x^t)+\partial {\mathbf{1}_X(x^{t+1})}.
\end{equation}
Similarly, we have
\begin{equation}\label{trivial12}
0\in -\nabla_yf(x^{t+1}, y^t)+\frac{1}{\alpha}(y^{t+1}-y^t)+\partial {\mathbf{1}_Y(y^{t+1})}.
\end{equation}

We let
$$u=(\nabla_xf(x^{t+1}, y^{t})-\nabla_xf(x^{t}, y^{t}))+(\nabla_xf(x^{t+1}, y^{t+1})-\nabla_xf(x^{t+1}, y^{t}))-p(x^{t+1}-z^t)-\frac{1}{2c}(x^{t+1}-x^t)$$
and
$$v=\nabla_yf(x^{t+1}, y^t)-\nabla_yf(x^{t+1}, y^{t+1})-\frac{1}{\alpha}(y^{t+1}-y^t).$$
By the Lipschitz-continuity of $\nabla_xf(x, y)$, Lemma \ref{yplus-appe} and \eqref{key4-appe}, we have
\begin{eqnarray*}
\|u\|&\le&L\|x^t-x^{t+1}\|+L\|y^t-y^{t+1}\|+p\epsilon+\frac{1}{2c}\epsilon\\
&\le&(L+p+1/2c)\epsilon+L\|y^t-y^t_+(z^t)\|+L\|y^t_+(z^t)-y^{t+1}\|\\
&\le&(L+p+1/2c)\epsilon+L\epsilon+L\kappa\epsilon = ((2 + \kappa)L + p + 1/2c) \epsilon,
\end{eqnarray*}
where the first and the second inequalities are both due to \eqref{key4-appe}, the triangular inequality and the Lipschitz-continuity of $\nabla_xf(\cdot)$ and the last inequality is because of Lemma \ref{yplus-appe}.
Similarly, we can prove that
$$\|v\|\le (L + \frac{1}{\alpha})(1 + \kappa)\epsilon.$$
Hence, we finish the proof with $\bar{\lambda}=(2 +\kappa)(L+ 1/\alpha) + p+ 1/2c$.

\end{proof}
We say that $\phi^t$ decreases sufficiently if
\begin{equation}\label{suff-decrease-appe}
\phi^t-\phi^{t+1}\ge \frac{1}{16c}\|x^t-x^{t+1}\|^2+\frac{1}{16\alpha}\|y^t-y^t_+(z^t)\|^2+\frac{p\beta}{16}\|z^t-x^{t
+1}\|^2.
\end{equation}
\begin{lemma}\label{trivial-appe}
Let $T>0$. Then if for any $t\in\{0, 1, \cdots, T-1\}$, \eqref{suff-decrease-appe} holds, there must exist a $t\in\{1, 2,\cdots, T\}$ such that $(x^t, y^t)$ is an $C/\sqrt{T\beta}$-solution.
Moreover, if for any $t\ge 0$,  \eqref{suff-decrease-appe} holds,
Then any limit point of $(x^t, y^t)$ is a solution of \eqref{minmax2}, and the iteration complexity of attaining an $
\epsilon-$solution is $\mathcal{O}(1/\epsilon^2)$.
\end{lemma}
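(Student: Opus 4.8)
The plan is to run a telescoping-and-averaging argument on the sufficient-decrease inequality \eqref{suff-decrease-appe}, using the lower bound $\phi^t\ge\underline{f}$ from Lemma \ref{lower-bounded}, and then to convert smallness of the three residuals into an approximate-stationarity certificate via Lemma \ref{trivial1}. First I would sum \eqref{suff-decrease-appe} over $t=0,1,\dots,T-1$. Since the right-hand side is a sum of nonnegative terms while the left-hand side telescopes to $\phi^0-\phi^T\le\phi^0-\underline{f}$, this yields a bound on $\sum_{t=0}^{T-1}\big(\tfrac{1}{16c}\|x^t-x^{t+1}\|^2+\tfrac{1}{16\alpha}\|y^t-y^t_+(z^t)\|^2+\tfrac{p\beta}{16}\|z^t-x^{t+1}\|^2\big)$ that is independent of $T$.

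For the finite-horizon claim I would pick the index $t^\ast\in\{0,\dots,T-1\}$ minimizing the summand, so the minimal summand is at most $(\phi^0-\underline{f})/T$. As each of the three terms is nonnegative, each is individually bounded by $(\phi^0-\underline{f})/T$; in particular $\|x^{t^\ast}-x^{t^\ast+1}\|^2$ and $\|y^{t^\ast}-y^{t^\ast}_+(z^{t^\ast})\|^2$ are $O(1/T)$ while $\|z^{t^\ast}-x^{t^\ast+1}\|^2$ is $O(1/(\beta T))$. Hence $\max\{\|x^{t^\ast}-x^{t^\ast+1}\|,\|y^{t^\ast}-y^{t^\ast}_+(z^{t^\ast})\|,\|z^{t^\ast}-x^{t^\ast+1}\|\}=O(1/\sqrt{\beta T})$, the $z$-residual dominating because $\beta\le1$. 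Feeding this into Lemma \ref{trivial1} shows $(x^{t^\ast+1},y^{t^\ast+1})$ is a $C/\sqrt{T\beta}$-solution with $t^\ast+1\in\{1,\dots,T\}$, which is the first assertion. The complexity bound then follows by setting $C/\sqrt{T\beta}\le\epsilon$: in the setting of \eqref{minmax2} the parameter $\beta$ is a fixed constant independent of $\epsilon$, so solving for $T$ gives $T=O(1/\epsilon^2)$.

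For the limit-point statement I would instead sum \eqref{suff-decrease-appe} over all $t\ge0$; the same lower bound makes the full series convergent, so each residual $\|x^t-x^{t+1}\|$, $\|y^t-y^t_+(z^t)\|$, $\|z^t-x^{t+1}\|$ tends to $0$. Along any subsequence converging to $(x^\ast,y^\ast,z^\ast)$ I would pass to the limit: $\|x^t-x^{t+1}\|\to0$ together with $\|z^t-x^{t+1}\|\to0$ forces $z^\ast=x^\ast$; the estimate \eqref{eb3} gives $x^{t+1}\to x(y^\ast,z^\ast)$, so $x^\ast=x(y^\ast,z^\ast)$; and $\|y^t-y^t_+(z^t)\|\to0$ identifies $y^\ast$ as a fixed point of the projected dual-gradient map, i.e. $y^\ast\in Y(z^\ast)$. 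Lemma \ref{eqvl} then yields $x(y^\ast,z^\ast)=x^\ast(z^\ast)$, so $z^\ast=x^\ast=x^\ast(z^\ast)$, and the second half of Lemma \ref{eqvl} gives $z^\ast\in X^\ast$; combined with $y^\ast\in Y(z^\ast)$ this shows $(x^\ast,y^\ast)$ solves \eqref{minmax2}.

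The telescoping and the per-term bookkeeping are routine; the points needing care are (i) tracking the different $\beta$-scalings of the three residuals so that the dominant one produces the $1/\sqrt{T\beta}$ rate rather than $1/\sqrt{T}$, and (ii) the limiting argument, where one must invoke the continuity from Lemma \ref{eqvl} and the error bound \eqref{eb3} to guarantee that vanishing algorithmic residuals certify a genuine min-max solution rather than merely an approximate one. I expect (ii) to be the main obstacle, since it is where the structure of the proximal and dual maps, rather than mere summability, is used.
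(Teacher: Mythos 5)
Your proposal is correct and follows essentially the same route as the paper's proof: telescope \eqref{suff-decrease-appe} against the lower bound $\phi^t\ge\underline{f}$ from Lemma \ref{lower-bounded}, pick the minimizing index so the residuals are $\mathcal{O}(1/\sqrt{T\beta})$ (the $z$-residual dominating since $\beta\le 1$), and invoke Lemma \ref{trivial1}; for the asymptotic claim, summability forces the residuals to vanish and continuity certifies stationarity of every limit point. Your execution of the limit step via $Y(z^\ast)$, Lemma \ref{eqvl}, and \eqref{eb3} is a slightly more explicit rendering of the same argument the paper states tersely, and your complexity bookkeeping (with $\beta$ a fixed constant) matches the paper's conclusion.
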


\begin{proof}

We have
\begin{eqnarray}\label{suff}
\phi^0-\underline{f}&\ge&\sum_{t=0}^{T-1}(\phi^t-\phi^{t+1})\\
&\ge&\min\{1/(16c), 1/(16\alpha), p/16\}\sum_{t=0}^{T-1}\max\{\|x^t-x^{t+1}\|^2, \\
&&\|y^t-y^t_+(z^t)\|^2, 
\beta \|x^{t+1}-z^t\|^2\},
\end{eqnarray}
where the last inequality is due to \eqref{suff-decrease-appe}.
Therefore, there exists a $t\in\{0,1, \cdots,T-1\}$ such that
$$\min\{1/(16c), 1/(16\alpha), p/16\}\max\{\|x^t-x^{t+1}\|^2, \|y^t-y^t_+(z^t)\|^2, \beta\|x^{t+1}-z^t\|^2\}\le (\phi^0-\underline{f})/T.$$
Since $\beta<1$, we further get
$$\min\{1/(16c), 1/(16\alpha), p/16\}\max\{\|x^t-x^{t+1}\|^2, \|y^t-y^t_+(z^t)\|^2, \|x^{t+1}-z^t\|^2\}\le (\phi^0-\underline{f})/(T\beta).$$
Hence, by Lemma \ref{trivial1}, $(x^{t+1}, y^{t+1})$ is a $\bar{\lambda} \sqrt{(\phi^0-\underline{f})/T\beta}$-solution, where $$\bar{\lambda} = 16 \left(\left(2 + \kappa\right)\left(L + \frac{1}{\alpha}\right) + p + \frac{1}{2c}\right) \max\{c, \alpha, \frac{1}{p}\}.$$
According to above analysis, If  \eqref{suff-decrease-appe} holds for any $t$,  we can attain an $\epsilon$-solution within
$$ \frac{ 16(\phi^0-\underline{f})\epsilon^2}{\beta} \max\{c, \alpha, 1/p\}$$
iterations.
Moreover, if \eqref{suff-decrease-appe} holds for any $t$, by \eqref{suff}, we have
\begin{equation}
\max\{\|x^t-x^{t+1}\|, \|y^t-y^t_+(z^t)\|, \|z^t-x^{t+1}\|\}\rightarrow 0.
\end{equation}
Consequently, for any limit point $(\bar{x}, \bar{y})$ of $(x^t, y^t)$, there exists a $\bar{z}$ such that
$$\max\{\|\bar{x}-\bar{x}_+(\bar{y}, \bar{z})\|, \|\bar{y}-\bar{y}_+(\bar{z})\|, \|\bar{x}_+(\bar{y}, \bar{z})-\bar{z}\|\}=0,$$
which yields $(\bar{x}, \bar{y})$ is a stationary solution.
Here  $$x_+(y, z)=P_X(x-\nabla_xK(x, z; y)).$$
\end{proof}
Now we are ready to prove Theorem \ref{general}.

\begin{proof}[Proof of  Theorem \ref{general}]
There are two cases \eqref{case1-appe} and \eqref{case2} as discussed in the proof for the general nonconvex-concave problems in last subsection.
 \begin{enumerate}
 \item  For some $t\in \{0, 1, \cdots, T-1\}$, we have
 \begin{equation}\label{case1-appe}
 \frac{1}{2}\max\{\frac{1}{8c}\|x^t-x^{t+1}\|^2, \frac{1}{8\alpha}\|y^t-y^t_+(z^t)\|^2, \frac{p}{8\beta}\|z^t-z^{t+1}\|^2\}
 \le 24p\beta\|x^*(z^t)-x(y^t_+(z^t), z^t)\|^2.
 \end{equation}
 \item For any $t\in \{0, 1, \cdots, T-1\}$,
 \begin{equation}\label{case2}
 \frac{1}{2}\max\{\frac{1}{8c}\|x^t-x^{t+1}\|^2, \frac{1}{8\alpha}\|y^t-y^t_+(z^t)\|^2, \frac{p}{8\beta}\|z^t-z^{t+1}\|^2\}
 \ge 24p\beta\|x^*(z^t)-x(y^t_+(z^t), z^t)\|^2
 \end{equation}
 \end{enumerate}
 
In the first case \eqref{case1-appe}, we have
\begin{eqnarray*}
\|y^t-y^t_+(z^t)\|^2&\le& 384p\beta\alpha\|x(y^t_+(z^t), z^t)-x^*(z^t)\|^2\\
&\le& 384p\beta\alpha\frac{1+\alpha L + \alpha L \sigma_2}{\alpha (p-L)}\|y^t-y^t_+(z^t)\|D(Y).\label{key3.3}
\end{eqnarray*}
Hence, letting
$\lambda_1=384p\frac{(1+\alpha L + \alpha L \sigma_2)}{p-L}\cdot D(Y)$, 
we have
\begin{equation}\label{key1}
\|y^t-y^t_+(z^t)\|\le \lambda_1\beta.
\end{equation}

Moreover,
\begin{eqnarray}\label{key2}
\|x^{t+1}-z^t\|^2&=&\|(z^{t+1}-z^t)/\beta\|^2\\
&\stackrel{ \mbox{\scriptsize(i)}} \le&384p\|x(y^t_+(z^t), z^t)-x^*(z^t)\|^2\\
&\stackrel{ \mbox{\scriptsize(ii)}} \le&384p\frac{1+\alpha L + \alpha L \sigma_2}{\alpha (p-L)}D(Y)\|y^t-y^t_+(z^t)\|\\
&\stackrel{ \mbox{\scriptsize(iii)}} \le&384p\frac{1+\alpha L + \alpha L \sigma_2}{\alpha (p-L)}D(Y)\lambda_1\beta,
\end{eqnarray}
where Inequality (i) is due to Inequality \eqref{case1-appe} and (ii)  is because of Lemma \ref{weak error bound-appe} and (iii) is
due to \eqref{key1}.
We also have
\begin{eqnarray}\label{key3}
\|x^t-x^{t+1}\|^2
&\stackrel{ \mbox{\scriptsize(i)}} \le&384cp\beta\|x^*(z^t)-x(y^t_+(z^t), z^t)\|^2\\
&\stackrel{ \mbox{\scriptsize(ii)}} \le&384pc\beta\frac{1+\alpha L + \alpha L \sigma_2}{\alpha (p-L)}D(Y)\|y^t-y^t_+(z^t)\|\\
&\stackrel{ \mbox{\scriptsize(iii)}} \le&384pc\frac{1+\alpha L + \alpha L \sigma_2}{\alpha (p-L)}\lambda_1D(Y)\beta^2,
\end{eqnarray}
where (i) is  due to \eqref{case1-appe}, (ii) is due to Lemma \ref{weak error bound-appe} and (iii) is because of \eqref{key1}.
Combining the above, in the first case, we have
\begin{eqnarray}\label{beta-bound}
&&\max\{\|x^t-x^{t+1}\|^2, \|y^t-y^t_+(z^t)\|^2, \|z^t-x^{t+1}\|^2\}\\
&\le& \max\{\lambda_2\beta^2, \lambda_1^2\beta^2, \lambda_3\beta\},
\end{eqnarray}
where $\lambda_2=384p\frac{1+\alpha L + \alpha L \sigma_2}{\alpha (p-L)}D(Y)\lambda_1$ and $\lambda_3=192pc\frac{1+\alpha L + \alpha L \sigma_2}{\alpha (p-L)}\lambda_1D(Y)$
According to Lemma \ref{trivial1}, there exists a $\lambda>0$ such that $(x^{t+1}, y^{t+1})$ is a $\lambda\max\{\beta, \sqrt{\beta}\}$-solution.

In the second case, we have
$$\phi^t-\phi^{t+1}\ge \frac{1}{16c}\|x^t-x^{t+1}\|^2+\frac{1}{16\alpha}\|y^t-y^t_+(z^t)\|^2+\frac{1}{16\beta}\|z^t-z^{t
+1}\|^2$$
for any $t\in\{0, 1, \cdots, T-1\}$.
By Lemma \ref{trivial-appe}, there exists a $t\in\{0, 1,\cdots, T-1\}$, 
such that $(x^{t+1}, y^{t+1})$ is a
$\bar{\lambda} \sqrt{(\phi^0-\underline{f})/T\beta}$-solution, where $\bar{\lambda} = 16 \left(\left(2 + \kappa\right)\left(L + \frac{1}{\alpha}\right) + p + \frac{1}{2c}\right) \max\{c, \alpha, \frac{1}{p}\}.$
Finally taking $\beta=1/\sqrt{T}$ and combining the two cases  with Lemma \ref{trivial-appe} yield the desired results.

\end{proof}

\subsection{The max problem is over a discrete set}
In this subsection, we prove Theorem \ref{discrete}. We will prove that under the strict complementarity assumption, the
potential function $\phi^t$ decreases sufficiently after any iteration. Then by the following simple lemma, we can prove
Theorem \ref{discrete}.

By the bounded level set assumption (Assumption \ref{boundedness}) and the fact that $\psi(z)\le P(z)$, for any $(x^0, y^0, z^0)\in\mathbb{R}^{n+m+n}$, there exists a constant $R(x^0, y^0, z^0)
>0$ such that
$$\{z\mid P(z)\le \phi(x^0, y^0, z^0)\}\subseteq  \mathcal{B}(R(x^0, y^0, z^0)).$$
Then we have the following ``dual error bound''. Note that this error bound is homogeneous compared to Lemma \ref{weak error bound-appe}.
\begin{lemma}\label{dual error bound-appe}
Let $$x_+(y, z)=P_X(x-\nabla_xK(x, z; y)).$$
If the strict complementarity assumption and the bounded level set assumption hold for \eqref{minmax2} , there exists $\delta>0$, such that if
$$\|z\|\le R(x^0, y^0, z^0),$$
and
$$\max\{\|x-x_+(y, z)\|, \|y-y_+(z)\|, \|x_+(y, z)-z\|\}<\delta$$
we have
$$\|x(y_+(z), z)-x^*(z)\|< \sigma_5\|y-y_+(z)\|$$
for  some constant $\sigma_5>0$.

\end{lemma}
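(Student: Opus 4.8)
The plan is to reduce the claim to a purely \emph{dual} error bound and then to derive that bound from the strict complementarity and regularity conditions via an active-set-plus-quadratic-growth argument. \textbf{Step 1 (reduction).} By Lemma~\ref{eqvl} we have $x^*(z)=x(\hat y,z)$ for \emph{any} $\hat y\in Y(z)$; taking $\hat y=P_{Y(z)}(y_+(z))$ and invoking the Lipschitz estimate \eqref{eb2} yields
$$\|x(y_+(z),z)-x^*(z)\|=\|x(y_+(z),z)-x(\hat y,z)\|\le \sigma_2\,\|y_+(z)-\hat y\|=\sigma_2\,\mathrm{dist}(y_+(z),Y(z)).$$
Hence it suffices to establish a dual error bound $\mathrm{dist}(y_+(z),Y(z))\le C\,\|y-y_+(z)\|$, after which $\sigma_5=\sigma_2C$ works. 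Since $\nabla_yd(y,z)=F(x(y,z))$ (Lemma~\ref{Lipschitz-dual}) and $y_+(z)=P_Y(y+\alpha\nabla_yd(y,z))$, the quantity $\tfrac1\alpha\|y-y_+(z)\|$ is exactly the projected-gradient (natural) residual of the concave maximization $\max_{y\in Y}d(y,z)$, so what is required is a Luo--Tseng type error bound for that problem.

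\textbf{Step 2 (core local bound).} Because $h(x,z)=\max_i f_i(x)+\tfrac p2\|x-z\|^2$, the maximizers in $Y(z)$ are precisely the optimal weights of the pointwise maximum at $x^*(z)$, supported on the active set $\mathcal{T}(x^*(z))=\{i:f_i(x^*(z))=\max_j f_j(x^*(z))\}$. When $z$ is near $X^*$, the strict complementarity assumption (Assumption~\ref{assumption for special}) makes the multipliers $\nu_i$ strictly positive for $i\notin\mathcal{I}_+$, so in a neighborhood the active set is identified and locally stable, and the components of $y_+(z)$ outside the support are forced small while those inside stay relatively interior. On the identified face, the full-column-rank condition (Assumption~\ref{regularity in main text}, implied by Assumption~\ref{assumption for special} through Lemma~\ref{uniqueness}) makes the maximizer of $d(\cdot,z)$ unique and furnishes a local quadratic-growth inequality for $d(\cdot,z)$ around $Y(z)$. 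Together with the Lipschitz continuity of $\nabla_yd$, quadratic growth converts the projected-gradient residual into the desired bound $\mathrm{dist}(y_+(z),Y(z))\le C\|y-y_+(z)\|$ for each such fixed $z$.

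\textbf{Step 3 (uniformity and localization, the main obstacle).} The delicate point is to produce a \emph{single} pair $(\delta,\sigma_5)$ valid for every $z$ with $\|z\|\le R$, since both $Y(z)$ and $\mathcal{T}(x^*(z))$ may vary with $z$, and since the applicability of strict complementarity itself requires $z$ to lie near $X^*$. I would settle both issues by a compactness/contradiction argument: if no admissible $(\delta,\sigma_5)$ existed, there would be base points $y^k$ and parameters $z^k$ with $\|z^k\|\le R$ whose three residuals tend to $0$ while the ratio $\|x(y_+(z^k),z^k)-x^*(z^k)\|/\|y^k-y_+(z^k)\|$ diverges. Passing to convergent subsequences $z^k\to\bar z$, $y_+(z^k)\to\bar y$, the vanishing residuals force $\bar z\in X^*$ and $\bar y\in Y(\bar z)$ (closedness of the solution maps, using $\|z^k\|\le R$ for compactness), so the limit is an original KKT point at which Assumptions~\ref{assumption for special} and \ref{regularity in main text} hold. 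The strict complementarity is exactly what keeps the active set eventually constant along the sequence, so the quadratic-growth constant of Step~2 does not degenerate — and this is where the contradiction with the diverging ratio arises. Ruling out this degeneration, i.e. showing that the Step~2 constant stays bounded below uniformly as $z$ ranges over a neighborhood of $X^*$, is the heart of the technical work.
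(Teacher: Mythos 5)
Your Step 1 reduction is valid: Lemma~\ref{eqvl} plus the Lipschitz estimate \eqref{eb2} indeed give $\|x(y_+(z),z)-x^*(z)\|\le\sigma_2\,\mathrm{dist}(y_+(z),Y(z))$, and it is a cleaner opening than the paper's. But it commits you to proving the \emph{forward} dual error bound $\mathrm{dist}(y_+(z),Y(z))\le C\|y-y_+(z)\|$, and that is exactly where the proposal has a genuine gap. In Step 2 you assert that the full-rank condition ``furnishes a local quadratic-growth inequality for $d(\cdot,z)$,'' but you never derive it, and the natural derivation --- strong concavity of $d(\cdot,z)$ along the identified face, obtained by differentiating the optimality system in $y$ --- requires second derivatives of the $f_i$, which Assumption~\ref{basic-ass} does not grant ($f$ is only assumed to have $L$-Lipschitz gradients), and is further complicated by the constraint set $X$. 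Even granting quadratic growth pointwise, you defer the uniformity of its constant over $\{z:\|z\|\le R(x^0,y^0,z^0)\}$ to Step 3 and concede it is ``the heart of the technical work''; so the two load-bearing quantitative facts (the growth inequality and its uniform constant) are both left unproven. The surrounding scaffolding --- identifying $\tfrac1\alpha\|y-y_+(z)\|$ as the projected-gradient residual of $\max_{y\in Y}d(y,z)$, the equivalence of quadratic growth and error bounds for convex problems, the compactness/contradiction scheme --- is sound, but without the core estimate the proof does not close.

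For comparison, the paper closes the loop without quadratic growth and without ever proving the forward dual error bound directly. It establishes active-set identification (Lemma~\ref{same active set}) and a uniform lower bound $\gamma$ on the smallest singular value of $M$ at near-solution points (Lemma~\ref{nonsingular}), both by the same compactness/contradiction device you sketch in Step 3; it then subtracts the two KKT stationarity systems for $x(y_+(z),z)$ and $x^*(z)$ on the common active set to obtain the \emph{reverse} inequality $\mathrm{dist}(y_+(z),Y(z))\le\lambda\|x^*(z)-x(y_+(z),z)\|$ (Lemma~\ref{inverse}) --- purely first-order linear algebra, no Hessians. Finally it feeds this into the nonhomogeneous weak error bound (Lemma~\ref{weak error bound-appe}), $\alpha(p-L)\|x^*(z)-x(y_+(z),z)\|^2\le(1+\alpha L+\alpha L\sigma_2)\|y-y_+(z)\|\cdot\mathrm{dist}(y_+(z),Y(z))$, and cancels one factor of the primal distance to get the homogeneous bound of the lemma. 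Note the directionality: the paper's Lemma~\ref{inverse} bounds the dual distance \emph{by} the primal distance, which is useless for your Step 1 chain on its own; it only becomes powerful through this bootstrap with the weak error bound. If you wish to salvage your route, the KKT-subtraction argument of Lemma~\ref{inverse} is also the $C^{1,1}$-compatible way to substantiate your quadratic-growth claim; as written, however, your Step 2 is an assertion precisely where the paper places its hardest lemmas.
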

Equipped with the dual error bound, we can prove that the potential function decreases after any iteration in the
following proposition:
\begin{proposition}
Suppose the conditions in Theorem \ref{discrete} holds, we have
\begin{equation}
\phi^t-\phi^{t+1}\ge \frac{1}{16c}\|x^t-x^{t+1}\|^2+\frac{1}{16\alpha}\|y^t-y^t_+(z^t)\|^2+\frac{p}{16\beta}\|z^t-z^{t
+1}\|^2.
\end{equation}
\end{proposition}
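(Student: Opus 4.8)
The goal is to upgrade the basic descent estimate of Proposition~\ref{basic-estimate-appe},
\[
\phi^t-\phi^{t+1}\ge \tfrac{1}{8c}\|x^t-x^{t+1}\|^2+\tfrac{1}{8\alpha}\|y^t-y^t_+(z^t)\|^2+\tfrac{p\beta}{8}\|x^{t+1}-z^t\|^2-24p\beta\|x^*(z^t)-x(y^t_+(z^t),z^t)\|^2,
\]
into the sufficient-decrease inequality \eqref{suff-decrease-appe} for \emph{every} $t$, and crucially with a \emph{constant} $\beta<\beta'$ rather than a $\beta\sim\epsilon^2$. This is the whole point of the discrete case: in the general case one was forced to shrink $\beta$ with $\epsilon$ because the nonhomogeneous bound Lemma~\ref{weak error bound-appe} only controls the error by $\sqrt{\|y^t-y^t_+(z^t)\|}$, whereas here the \emph{homogeneous} dual error bound (Lemma~\ref{dual error bound-appe}), enabled by strict complementarity, controls it by the first power of $\|y^t-y^t_+(z^t)\|$. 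Everything reduces to absorbing the negative term $N:=24p\beta\|x^*(z^t)-x(y^t_+(z^t),z^t)\|^2$ into the three positive terms.

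The plan is a dichotomy on the residual triple $\rho^t:=\max\{\|x^t-x^{t+1}\|,\|y^t-y^t_+(z^t)\|,\|x^{t+1}-z^t\|\}$ against the threshold $\delta$ of Lemma~\ref{dual error bound-appe}. First, when $\rho^t<\delta$, I would invoke Lemma~\ref{dual error bound-appe} (applicable since $\|z^t\|\le R$ under the bounded-level-set / bounded-iterates hypothesis) to obtain $\|x^*(z^t)-x(y^t_+(z^t),z^t)\|\le\sigma_5\|y^t-y^t_+(z^t)\|$, so that $N\le 24p\sigma_5^2\beta\,\|y^t-y^t_+(z^t)\|^2$. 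Choosing $\beta\le\beta':=1/(384\,p\alpha\sigma_5^2)$ forces $N\le\tfrac12\cdot\tfrac{1}{8\alpha}\|y^t-y^t_+(z^t)\|^2$, which leaves $\phi^t-\phi^{t+1}\ge \tfrac{1}{8c}\|x^t-x^{t+1}\|^2+\tfrac{1}{16\alpha}\|y^t-y^t_+(z^t)\|^2+\tfrac{p\beta}{8}\|x^{t+1}-z^t\|^2$, dominating \eqref{suff-decrease-appe}. The absorption constant $384\,p\alpha\sigma_5^2$ is free of $\epsilon$ and $T$, which is precisely why $\beta$ need not shrink.

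Second, when $\rho^t\ge\delta$, I would use the compactness of $Y$: since $y^t_+(z^t)\in Y$ and $z^t$ lies in a fixed bounded ball, the maps $x(\cdot,\cdot)$ and $x^*(\cdot)$ are bounded there, so $\|x^*(z^t)-x(y^t_+(z^t),z^t)\|\le M$ for a fixed $M$ and hence $N\le 24pM^2\beta$. If the large residual is the primal or dual one, the matching positive term is bounded below by the $\beta$-free quantity $\min\{\tfrac{1}{8c},\tfrac{1}{8\alpha}\}\delta^2$, and a further shrinking of $\beta$ makes $N$ negligible against it, yielding \eqref{suff-decrease-appe}.

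The hard part will be the remaining sub-case in which \emph{only} the proximal residual $\|x^{t+1}-z^t\|$ is large: then the matching positive term $\tfrac{p\beta}{8}\|x^{t+1}-z^t\|^2$ is itself $O(\beta)$, the same order as $N$, so the bare compactness bound $N\le 24pM^2\beta$ no longer suffices. Here I would argue that a small primal residual forces $x^{t+1}\approx x(y^t,z^t)$ via \eqref{eb3}, a small dual residual forces $x(y^t,z^t)\approx x(y^t_+(z^t),z^t)$ via \eqref{eb2}, and $x^*(z^t)=x(y(z^t),z^t)$ for $y(z^t)\in Y(z^t)$ by Lemma~\ref{eqvl}; consequently the error $\|x^*(z^t)-x(y^t_+(z^t),z^t)\|$ is governed by $\mathrm{dist}(y^t_+(z^t),Y(z^t))$, i.e.\ by the \emph{dual} residual, and not by $\|x^{t+1}-z^t\|$ at all. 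The crux is therefore to guarantee that the homogeneous bound of Lemma~\ref{dual error bound-appe} remains valid throughout $\{\|z\|\le R\}$ and is not genuinely constrained by its $\|x^{t+1}-z^t\|<\delta$ clause, so that this sub-case collapses into the small-residual case. Once \eqref{suff-decrease-appe} is established for all $t$, Theorem~\ref{discrete} follows directly from Lemma~\ref{trivial-appe}: telescoping the decrease produces an $\epsilon$-stationary point within $\mathcal{O}(1/\epsilon^2)$ iterations and forces every limit point of $(x^t,y^t)$ to solve \eqref{minmax2}.
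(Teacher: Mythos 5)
You have correctly identified the crux, but your proposed resolution of it is a gap, not a proof. Your dichotomy splits on the size of the residual triple against the threshold $\delta$ of Lemma~\ref{dual error bound-appe}, and as you yourself observe, this leaves the subcase where only $\|x^{t+1}-z^t\|\ge\delta$ genuinely open: there the matching positive term $\tfrac{p\beta}{8}\|x^{t+1}-z^t\|^2$ is of the same order $O(\beta)$ as the negative term, and your escape route --- hoping that the homogeneous bound of Lemma~\ref{dual error bound-appe} ``is not genuinely constrained by its $\|x_+(y,z)-z\|<\delta$ clause'' --- is unsupported and in fact contrary to how that lemma is established. Its proof runs through a compactness/limit argument (Lemmas~\ref{limit}, \ref{nonsingular}, \ref{same active set}, \ref{inverse}): one needs \emph{all three} residuals small so that the iterates approach the solution set $W^*$, where strict complementarity identifies the active sets $\mathcal{A}[y_+(z)]=\mathcal{A}[y(z)]$ and gives uniform nonsingularity of $M$. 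If $\|x_+(y,z)-z\|$ is large, $z$ is far from a fixed point of the proximal map, $x^*(z)$ need not be near $X^*$, and none of the active-set identification or singular-value lower bounds survive; so the clause cannot simply be dropped.

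The paper avoids this subcase by a different dichotomy: it compares the negative term directly against half the \emph{maximum} of the three positive terms, i.e.\ it splits on whether \eqref{case1-appe2} or its reverse holds. In the reverse case, sufficient decrease \eqref{suff-decrease-appe} is immediate (the negative term eats at most half of each positive term). In case \eqref{case1-appe2}, the \emph{weak} (nonhomogeneous) error bound, Lemma~\ref{weak error bound-appe}, is first used to show that all three residuals --- crucially including the proximal one, via $\|x^{t+1}-z^t\|^2\le\lambda_3\beta$ --- are bounded by $\max\{\lambda_2\beta^2,\lambda_1^2\beta^2,\lambda_3\beta\}\le\delta^2$ once $\beta<\min\{\delta/\sqrt{\lambda_2},\,\delta/\lambda_1,\,\delta^2/\lambda_3,\,1/(384p\alpha\sigma_5^2)\}$; only then is the homogeneous bound Lemma~\ref{dual error bound-appe} invoked, with the absorption $24p\beta\sigma_5^2\|y^t-y_+^t(z^t)\|^2\le\tfrac{1}{16\alpha}\|y^t-y_+^t(z^t)\|^2$ exactly as in your small-residual branch. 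Your argument can be repaired along the same lines (in your problematic subcase, apply Lemma~\ref{weak error bound-appe} to get $N\lesssim\beta\|y^t-y_+^t(z^t)\|$ and compare with $\tfrac{p\beta}{8}\delta^2$, splitting once more on a constant dual-residual threshold), but as written the step is missing. A secondary omission: under Assumption~\ref{boundedness} alone (rather than assumed bounded $\{z^t\}$), the hypothesis $\|z^t\|\le R(x^0,y^0,z^0)$ of Lemma~\ref{dual error bound-appe} is not free --- the paper secures it by an induction interleaving sufficient decrease with $P(z^{t+1})\le\phi^{t+1}\le\phi^0$, which your proposal glosses over.
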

\begin{proof}
We set $\beta<\min\{\delta/\sqrt{\lambda_2}, \delta/\lambda_1, \delta^2/\lambda_3, 1/(384p\alpha\sigma_5^2)\}$.
First, we prove that
\begin{equation}
\phi^t-\phi^{t+1}\ge \frac{1}{16c}\|x^t-x^{t+1}\|^2+\frac{1}{16\alpha}\|y^t-y^t_+(z^t)\|^2+\frac{p}{16\beta}\|z^t-z^{t
+1}\|^2.
\end{equation}
and
$$\|z^t\|< R(x^0, y^0, z^0)$$
for any $t\ge 0$.
We prove it by induction. We will prove that
\begin{enumerate}
\item If $\|z^t\|\le R(x^0,y^0, z^0)$, then
\begin{equation}
\phi^t-\phi^{t+1}\ge \frac{1}{16c}\|x^t-x^{t+1}\|^2+\frac{1}{16\alpha}\|y^t-y^t_+(z^t)\|^2+\frac{p}{16\beta}\|z^t-z^{t
+1}\|^2.
\end{equation}
\item If $\phi^{t+1}\le \phi^t$, we have $\|z^{t+1}\|\le R(x^0, y^0, z^0)$.
\end{enumerate}

For $t=0$, it is trivial that $\|z^t\|\le R(x^0, y^0, z^0)$.
For the first step, assume that we have $\|z^t\|\le R(x^0, y^0, z^0)$.
There are two cases:
\begin{enumerate}
\item  For some $t$, we have
\begin{equation}\label{case1-appe2}
\frac{1}{2}\max\{\frac{1}{8c}\|x^t-x^{t+1}\|^2, \frac{1}{8\alpha}\|y-y^t_+(z^t)\|^2, \frac{p}{8\beta}\|z^t-z^{t+1}\|^2\}
\le 24p\beta\|x^*(z^t)-x(y^t_+(z^t), z^t)\|^2.
\end{equation}
\item For any $t$,
\begin{equation}\label{case2-appe2}
\frac{1}{2}\max\{\frac{1}{8c}\|x^t-x^{t+1}\|^2, \frac{1}{8\alpha}\|y-y^t_+(z^t)\|^2, \frac{p}{8\beta}\|z^t-z^{t+1}\|^2\}
\ge 24p\beta\|x^*(z^t)-x(y^t_+(z^t), z^t)\|^2
\end{equation}
\end{enumerate}
For the first case, as in the last subsection, we have
\begin{eqnarray*}
&&\max\{\|x^t-x^{t+1}\|^2, \|y^t-y^t_+(z^t)\|^2, \|x^{t+1}-z^t\|^2\}\\
&\le&\max\{\lambda_2\beta^2, \lambda_1^2\beta^2, \lambda_3\beta\}\\
&\le&\delta^2.
\end{eqnarray*}
Hence, we can make use of Lemma \ref{dual error bound-appe}.  In fact, we have
\begin{eqnarray*}
24p\beta\|x(y^t_+(z^t), z^t)-x^*(z^t)\|^2&\le& 24p\beta\sigma_5^2\|y^t-y^t_+(z^t)\|\\
&\le& \frac{1}{16\alpha}\|y^t-y^t_+(z^t)\|^2,
\end{eqnarray*}
which yields \eqref{suff-decrease-appe} together with \eqref{basic estimate-appe}.
For the second case, \eqref{suff-decrease-appe} holds as in the last subsection.
Hence, if $\|z^t\|\le R(x^0, y^0, z^0)$, we have \eqref{suff-decrease-appe}.
For the second step, if \eqref{suff-decrease-appe} holds for $0, 1, \cdots, (t-1)$, we have
\begin{eqnarray*}
P(z^{t+1})&\le& \phi^{t+1}\\
&\le&\phi^0.
\end{eqnarray*}
Hence, $z^{t+1}\in \mathcal{B}(R(x^0, y^0, z^0))$.
Combining these, for any $t\ge 0$, $\|z^t\|\le R(x^0, y^0, z^0)$ and \eqref{suff-decrease-appe} holds.
Then the theorem comes from Lemma \ref{trivial-appe}.
\end{proof}

\section{The multi-block cases}\label{Appendix: multi block}
The proofs for the multi-block case is similar to the one-block case. In this section, we briefly introduce the proof of them.
Note that the only differences for proving the theorem s are Lemma \ref{primal}, \eqref{eb3} and Proposition \ref{basic-estimate}.
Instead, we have the following:
\begin{lemma}[Primal Descent] \label{primal-multiblock}
For any $t$, we have
\begin{eqnarray}\label{K02}
\nonumber K(x^t, z^t; y^t)-K(x^{t+1}, z^{t+1}; y^{t+1})&\ge& \frac{1}{2c}\|x^t-x^{t+1}\|^2+ \langle\nabla_y K(x^{t+1}, z^t; y^t), y^t - y^{t+1}\rangle\\
&&-\frac{L}{2}\|y^t-y^{t+1}\|^2+\frac{p}{2\beta}\|z^t-z^{t+1}\|^2.
\end{eqnarray}
\end{lemma}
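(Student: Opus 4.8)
The plan is to reduce everything to the one-block argument of Lemma \ref{primal}. That proof decomposes the primal descent into three telescoping pieces: an $x$-piece \eqref{K1}, a $y$-piece \eqref{K2}, and a $z$-piece \eqref{K3}. Since the dual update $y^{t+1}$ and the averaging update $z^{t+1}$ in Algorithm \ref{Alg3} are identical to those in Algorithm \ref{Alg2}, the inequalities \eqref{K2} and \eqref{K3} carry over verbatim: \eqref{K2} only uses the $L$-Lipschitz continuity of $\nabla_y K(x^{t+1}, z^t; \cdot)$ at the already-updated point $x^{t+1}$, and \eqref{K3} only uses the explicit form $z^{t+1} = z^t + \beta(x^{t+1} - z^t)$. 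Hence the entire burden is to re-establish the $x$-piece
$$K(x^t, z^t; y^t) - K(x^{t+1}, z^t; y^t) \ge \frac{1}{2c}\|x^t - x^{t+1}\|^2$$
for the block-by-block primal update; summing the three pieces then reproduces \eqref{K02} exactly as in Lemma \ref{primal}.

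To prove the $x$-piece I would interpolate between $x^t$ and $x^{t+1}$ along the blocks. For $i = 0, 1, \ldots, N$ set
$$w^i = (x_1^{t+1}, \ldots, x_i^{t+1}, x_{i+1}^t, \ldots, x_N^t),$$
so that $w^0 = x^t$, $w^N = x^{t+1}$, and the $i$-th inner iteration of Algorithm \ref{Alg3} is precisely the gradient-projection step $x_i^{t+1} = P_X(x_i^t - c\,\nabla_{x_i} K(w^{i-1}, z^t; y^t))$ on the $i$-th block. Because the full gradient $\nabla_x K(\cdot, z^t; y^t)$ is $(L+p)$-Lipschitz, its restriction to a single block is $(L+p)$-Lipschitz in that block, so with the stepsize $c < 1/(p+L)$ the standard gradient-projection descent estimate gives, for every $i$,
$$K(w^{i-1}, z^t; y^t) - K(w^i, z^t; y^t) \ge \frac{1}{2c}\|x_i^{t+1} - x_i^t\|^2.$$
Summing over $i = 1, \ldots, N$ telescopes the left-hand side to $K(x^t, z^t; y^t) - K(x^{t+1}, z^t; y^t)$, while the right-hand side becomes $\frac{1}{2c}\sum_{i=1}^N \|x_i^{t+1} - x_i^t\|^2 = \frac{1}{2c}\|x^{t+1} - x^t\|^2$, since the Euclidean norm of the concatenated vector splits across blocks. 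This is exactly the desired $x$-piece.

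The main obstacle is the per-block descent step, which hinges on two points. First, the Lipschitz constant of $\nabla_{x_i} K$ along block $i$ is no larger than that of the full gradient $\nabla_x K$, so the single stepsize $c < 1/(p+L)$ is simultaneously admissible for all blocks; this follows by restricting the Lipschitz inequality to directions supported on block $i$. Second, the block projections must be compatible with the splitting of $\|x^{t+1}-x^t\|^2$, i.e.\ $X$ must carry the product structure $X = X_1 \times \cdots \times X_N$, so that updating block $i$ leaves the others fixed and the gradient-projection estimate applies to each block independently. Once these are in place the telescoping is routine, and the combination of the three pieces is identical to the proof of Lemma \ref{primal}.
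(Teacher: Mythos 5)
Your proof is correct and is essentially the argument the paper intends: the paper omits the proof and defers to Lemma 5.3 of \cite{zhang2020proximal}, which is exactly your block-wise telescoping of the projected-gradient descent estimate over the interpolating points $w^0=x^t,\dots,w^N=x^{t+1}$ (using that each block gradient of $K(\cdot,z^t;y^t)$ inherits the $(L+p)$-Lipschitz constant, so $c<1/(p+L)$ works for every block), with the $y$- and $z$-pieces carried over verbatim from the one-block Lemma \ref{primal}. Your side remark that the per-block projections require the product structure $X=X_1\times\cdots\times X_N$ is a correct reading of the implicit assumption behind the block updates in Algorithm \ref{Alg3}.
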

The proof of it is the same as Lemma 5.3 in \cite{zhang2020proximal}.
The error bound \eqref{eb3} becomes:
\begin{lemma}\label{eb32}
We have
$$\|x^{t+1}-x(y^t, z^t)\|\le \sigma_3'\|x^t-x^{t+1}\|,$$
where $\sigma_3'=(c(p-L)+1+c(L+p)N^{3/2})/c(p-L)$.
\end{lemma}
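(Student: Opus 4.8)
The plan is to mirror the single-block derivation of the error bound \eqref{eb3}, but to account for the Gauss--Seidel coupling that the block-by-block updates introduce. Write $g(\cdot)=K(\cdot,z^t;y^t)$ and $\hat x = x(y^t,z^t)=\argmin_{x\in X}g(x)$; recall from the one-block analysis that $g$ is $(p-L)$-strongly convex in $x$ and that $\nabla g$ is $(p+L)$-Lipschitz. Introduce the partially-updated iterates
$$w^{(i)}=(x_1^{t+1},\dots,x_i^{t+1},x_{i+1}^t,\dots,x_N^t),\qquad i=0,1,\dots,N,$$
so that $w^{(0)}=x^t$, $w^{(N)}=x^{t+1}$, and the $i$-th block update reads $x_i^{t+1}=P_{X_i}\bigl(x_i^t-c\nabla_{x_i}g(w^{(i-1)})\bigr)$.

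First I would write down the first-order optimality (variational) inequalities. The projection defining $x_i^{t+1}$ gives, for every $u_i\in X_i$, the inequality $\langle x_i^t-c\nabla_{x_i}g(w^{(i-1)})-x_i^{t+1},\,u_i-x_i^{t+1}\rangle\le 0$, while optimality of $\hat x$ gives, block-wise (using that $X$ is a product set), $\langle\nabla_{x_i}g(\hat x),\,u_i-\hat x_i\rangle\ge 0$. Taking $u_i=\hat x_i$ in the first, $u_i=x_i^{t+1}$ in the second, multiplying the latter by $c$, adding, and summing over $i$ yields a master inequality of the form
$$\langle x^t-x^{t+1},\,\hat x-x^{t+1}\rangle + c\sum_{i=1}^N\langle \nabla_{x_i}g(\hat x)-\nabla_{x_i}g(w^{(i-1)}),\,\hat x_i-x_i^{t+1}\rangle\le 0.$$

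The key step is to split each block gradient into a contribution at a common reference point plus a Gauss--Seidel discrepancy, $\nabla_{x_i}g(w^{(i-1)})=\nabla_{x_i}g(x^t)+\bigl(\nabla_{x_i}g(w^{(i-1)})-\nabla_{x_i}g(x^t)\bigr)$. The reference part reassembles into $c\langle\nabla g(\hat x)-\nabla g(x^t),\,\hat x-x^{t+1}\rangle$, which I would handle by monotonicity/strong convexity to extract a positive term proportional to $c(p-L)\|x^t-\hat x\|^2$. The discrepancy part is controlled by Lipschitz continuity: since $w^{(i-1)}-x^t$ is supported only on the already-updated blocks $1,\dots,i-1$, one has $\|w^{(i-1)}-x^t\|\le\|x^t-x^{t+1}\|$ and $\|\nabla_{x_i}g(w^{(i-1)})-\nabla_{x_i}g(x^t)\|\le (p+L)\|w^{(i-1)}-x^t\|$. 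Summing these block errors against $\|\hat x_i-x_i^{t+1}\|$ and converting the resulting sums over the $N$ blocks from $\ell_1$ to $\ell_2$ via Cauchy--Schwarz is precisely where the factor $N^{3/2}$ is produced.

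Collecting terms, the master inequality collapses to a scalar inequality of the form $c(p-L)\|x^t-\hat x\|\le \bigl(1+c(p+L)N^{3/2}\bigr)\|x^t-x^{t+1}\|$, giving
$$\|x^t-\hat x\|\le \frac{1+c(p+L)N^{3/2}}{c(p-L)}\,\|x^t-x^{t+1}\|,$$
and the triangle inequality $\|x^{t+1}-x(y^t,z^t)\|\le \|x^t-x^{t+1}\|+\|x^t-\hat x\|$ then delivers the claim with $\sigma_3'=\dfrac{c(p-L)+1+c(p+L)N^{3/2}}{c(p-L)}$. The main obstacle is the bookkeeping in the third paragraph: unlike the single-block case, where the update is driven by one gradient $\nabla g(x^t)$, here the $N$ block gradients are evaluated at $N$ distinct partially-updated points $w^{(i-1)}$, and tracking how these discrepancies accumulate across blocks — so as to obtain the sharp power of $N$ rather than a crude overcount — is the delicate part of the argument.
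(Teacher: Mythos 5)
Your overall architecture is sound, and it is worth noting at the outset that the paper itself gives no proof of this lemma: it simply states that the argument is ``similar to Lemma 5.2 in \cite{zhang2020proximal}'' and omits it. So your self-contained variational-inequality derivation is a legitimate substitute, and its three ingredients --- blockwise projection optimality summed into a master inequality, the split $\nabla_{x_i}g(w^{(i-1)})=\nabla_{x_i}g(x^t)+(\nabla_{x_i}g(w^{(i-1)})-\nabla_{x_i}g(x^t))$, and the bound $\|w^{(i-1)}-x^t\|\le\|x^t-x^{t+1}\|$ since $w^{(i-1)}$ differs from $x^t$ only on already-updated blocks --- are all correct, as is your explicit flagging that $X$ must be a product set for the blockwise optimality conditions of $\hat x$ (the paper's Algorithm 3 assumes this implicitly).

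There are, however, two bookkeeping problems in the middle of the argument. First, the claimed collapse to the linear inequality $c(p-L)\|x^t-\hat x\|\le(1+c(p+L)N^{3/2})\|x^t-x^{t+1}\|$ does not follow from the steps as written: you apply strong monotonicity between $\hat x$ and $x^t$, but every inner product in your master inequality is tested against $\hat x-x^{t+1}$, so extracting $c(p-L)\|x^t-\hat x\|^2$ forces the decomposition $\hat x-x^{t+1}=(\hat x-x^t)+(x^t-x^{t+1})$ and leaves an unaccounted cross term $c\langle\nabla g(\hat x)-\nabla g(x^t),x^t-x^{t+1}\rangle$ of size $c(p+L)\|\hat x-x^t\|\,\|x^t-x^{t+1}\|$; carried through honestly this produces a quadratic inequality in $\|\hat x-x^t\|$ whose resolved constant exceeds $\sigma_3'$ for small $N$. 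The repair is to apply strong monotonicity between $\hat x$ and $x^{t+1}$ instead, matching the test vector: then $c\langle\nabla g(\hat x)-\nabla g(x^{t+1}),\hat x-x^{t+1}\rangle\ge c(p-L)\|\hat x-x^{t+1}\|^2$, every remaining term is bounded by a multiple of $\|x^t-x^{t+1}\|\,\|\hat x-x^{t+1}\|$, and one divides once to get
\begin{equation*}
\|x^{t+1}-x(y^t,z^t)\|\le\frac{1+c(p+L)(1+\sqrt{N})}{c(p-L)}\,\|x^t-x^{t+1}\|,
\end{equation*}
with no triangle inequality needed; this constant is dominated by $\sigma_3'$ for $N\ge 2$, and for $N=1$ the discrepancy term vanishes identically so the single-block constant suffices. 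Second, your claim that the $\ell_1$-to-$\ell_2$ conversion ``is precisely where the factor $N^{3/2}$ is produced'' is not right: your own accounting ($\|w^{(i-1)}-x^t\|\le\|x^t-x^{t+1}\|$ plus one Cauchy--Schwarz over blocks) yields only $\sqrt{N}$. The $N^{3/2}$ in the stated $\sigma_3'$ is a deliberate overcount inherited from the cruder $\ell_1$-based bookkeeping in the reference; obtaining $\sqrt{N}$ is perfectly fine here, since it proves the lemma with a smaller constant and hence a fortiori with $\sigma_3'$. Neither issue is a missing idea --- both are repairable within your framework --- but as written the proof does not literally deliver the inequality you assert it does.
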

The proof of Lemma \ref{eb32} is similar  to Lemma 5.2 in \cite{zhang2020proximal} hence omitted here.
Because of the above two differences, we have a replacement of Proposition \ref{basic-estimate-appe}:
\begin{proposition}\label{basic2}
We let
\begin{align}\label{eqn: parameters for Alg 2 general case-appe2}
    p>3L, c<\frac{1}{p+L}, \alpha<\min\{\frac{1}{11L}, \frac{c^2(p-L)^2}{4L(1+c(p+L)N^{3/2}+c(p-L))^2}\}, \min\{\frac{1}{36}, \beta<\frac{(p-L)^2}{384p(p+L)^2}\}.
\end{align}
Then we have
\begin{eqnarray}\label{basic estimate-appe2}
&&\phi^t-\phi^{t+1}\\
\nonumber&\ge&\frac{1}{4c}\|x^t-x^{t+1}\|^2\\
&&+\frac{1}{4\alpha}\|y^t-y^t_+(z^t)\|^2+\frac{p}{8\beta}\|z^t-z^{t+1}\|^2\\
&&-24p\beta\|x^*(z^{t})-x(y^t_+(z^t), z^{t})\|^2
\end{eqnarray}
\end{proposition}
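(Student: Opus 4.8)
The plan is to re-run the argument of Proposition \ref{basic-estimate-appe} essentially verbatim, the only structural change being that the single-block primal error bound \eqref{eb3} is replaced by its multi-block counterpart Lemma \ref{eb32}; equivalently, every occurrence of the constant $\sigma_3$ is replaced by $\sigma_3'=(c(p-L)+1+c(L+p)N^{3/2})/(c(p-L))$. First I would observe that the three descent estimates driving the one-block proof all remain available. The primal-descent inequality takes the identical form in the multi-block case (Lemma \ref{primal-multiblock}), so it enters the combination exactly as Lemma \ref{primal} did. The dual-ascent estimate (Lemma \ref{dual ascent}) and the proximal-descent estimate (Lemma \ref{proximal-descent}) are stated purely in terms of the exact minimizers $x(y,z)$, $x^*(z)$ and the functions $d(\cdot)$, $P(\cdot)$, which are defined through full minimization over $x\in X$ and are therefore insensitive to whether the primal step is taken jointly or block-by-block; hence these two lemmas carry over unchanged.

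Next I would combine the three descent lemmas exactly as in the derivation of \eqref{the_eqn1}, and then use the projection optimality of the $y$-update to obtain $\langle \nabla_y K(x^{t+1},z^t;y^t), y^{t+1}-y^t\rangle \ge \tfrac{1}{\alpha}\|y^t-y^{t+1}\|^2$. It then remains to control the two cross terms. The term $2p(z^{t+1}-z^t)^\transp(x(y(z^{t+1}),z^t)-x(y^{t+1},z^{t+1}))$ is handled precisely as before, via the Lipschitz bounds \eqref{eb1}, \eqref{eb11} of Lemma \ref{Lp_continue}, which are unaffected by the block structure. The only place where $\sigma_3$ (hence $\sigma_3'$) enters is the term $2\langle \nabla_y K(x(y^t,z^t),z^t;y^t)-\nabla_y K(x^{t+1},z^t;y^t),\, y^{t+1}-y^t\rangle$: bounding $\|x^{t+1}-x(y^t,z^t)\|$ by Lemma \ref{eb32} instead of \eqref{eb3} and applying AM--GM yields the lower bound $-L(\sigma_3')^2\|y^t-y^{t+1}\|^2-L\|x^t-x^{t+1}\|^2$.

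The modified step-size condition is tailored exactly so that this substitution goes through. Since $\alpha<\tfrac{c^2(p-L)^2}{4L(1+c(p+L)N^{3/2}+c(p-L))^2}=\tfrac{1}{4L(\sigma_3')^2}$, one gets $\tfrac{1}{2\alpha}-L(\sigma_3')^2\ge\tfrac{1}{4\alpha}$, the multi-block analog of the one-block estimate $\tfrac{1}{2\alpha}-L\sigma_3^2\ge\tfrac{1}{4\alpha}$. Combined with $p>3L$, $c<1/(p+L)$ and $\beta<1/36$ (which keep the $x$- and $z$-coefficients at least $\tfrac{1}{4c}$ and $\tfrac{p}{4\beta}$), this produces the multi-block analog of \eqref{original-estimate}. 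Finally I would homogenize the right-hand side: Lemma \ref{yplus-appe}, now with $\kappa=\alpha L\sigma_3'$, converts $\|y^t-y^{t+1}\|^2$ into $\|y^t-y^t_+(z^t)\|^2$ as in \eqref{spl1}, while \eqref{eb2} together with the Lipschitz bounds of Lemma \ref{Lp_continue} converts $\|x^*(z^{t+1})-x(y^{t+1},z^{t+1})\|^2$ into $\|x^*(z^t)-x(y^t_+(z^t),z^t)\|^2$ as in \eqref{spl2}, the residual cross-terms being absorbed by the smallness of $\beta$. Collecting terms gives the claimed estimate \eqref{basic estimate-appe2}.

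The genuinely nontrivial ingredients are the two cited facts Lemma \ref{primal-multiblock} and Lemma \ref{eb32}, which is where the Gauss--Seidel coupling across the $N$ blocks is actually confronted; the extra factor $N^{3/2}$ in $\sigma_3'$ is precisely the price paid for estimating the gap between the sequentially-updated iterate $x^{t+1}$ and the exact joint minimizer $x(y^t,z^t)$. Once these are granted (their proofs follow Lemmas 5.2--5.3 of \cite{zhang2020proximal}), the remaining content of Proposition \ref{basic2} is a faithful constant-tracking re-run of the one-block computation, so I expect the main difficulty to be purely bookkeeping: keeping $\sigma_3'$ and the associated $\alpha$- and $\beta$-thresholds consistent throughout, rather than any new conceptual obstacle.
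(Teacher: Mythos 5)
Your proposal is correct and takes essentially the same route as the paper, which omits the proof of Proposition \ref{basic2} as ``similar to Proposition \ref{basic-estimate-appe}'' with the only changes being Lemma \ref{primal-multiblock} and the multi-block error bound Lemma \ref{eb32}, i.e.\ exactly your substitution $\sigma_3\to\sigma_3'$ (hence $\kappa=\alpha L\sigma_3'$ and the $\alpha$-threshold $1/(4L(\sigma_3')^2)$) with everything else carried over verbatim. One small remark: a faithful constant-tracking re-run, as you describe, lands on the coefficients $\tfrac{1}{8c}$ and $\tfrac{1}{8\alpha}$ as in \eqref{basic estimate-appe}, slightly weaker than the $\tfrac{1}{4c}$ and $\tfrac{1}{4\alpha}$ displayed in \eqref{basic estimate-appe2}, which appears to be a typo in the paper's statement rather than a defect in your argument.
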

The proof of Proposition \ref{basic2}  is similar to Proposition \ref{basic-estimate-appe} hence omitted.
\section{Proof of the error bound lemmas}\label{Appendix: error bound}
\subsection{Proof of lemma \ref{weak error bound-appe}}
Let
$$x_+(y, z)=P_X(x-c\nabla_xK(x, z; y))$$
and
$$y_+(z)=P_Y(y+\alpha\nabla_yK(x(y, z), z;y)).$$
Then Lemma \ref{weak error bound-appe} can be written as

\begin{lemma}\label{weak error bound:rewrite}
We have
\begin{eqnarray*}
\alpha (p-L)\|x^*(z)-x(y_+(z), z)\|^2
&<& (1+\alpha L + \alpha L \sigma_2) \|y-y_+(z)\|\cdot \mathrm{dist}(y_+(z), Y(z)).\\
&\le&(1+\alpha L + \alpha L \sigma_2)\|y-y_+(z)\|\cdot D(Y),
\end{eqnarray*}
where $D(Y)$ is the diameter of $Y$.
\end{lemma}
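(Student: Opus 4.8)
The plan is to bound the primal displacement $\|x^*(z)-x(y_+(z),z)\|$ by sandwiching the duality gap $P(z)-d(y_+(z),z)$ of the smoothed inner problem between a \emph{primal} lower bound (coming from strong convexity in $x$) and a \emph{dual} upper bound (coming from the projection step and concavity in $y$). Throughout, let $y^\star$ denote the projection of $y_+(z)$ onto the optimal dual set $Y(z)$, so that $\|y^\star-y_+(z)\|=\mathrm{dist}(y_+(z),Y(z))$, and recall that by the strong-duality (Kakutani) identity already used in Lemma~\ref{proximal-descent} one has $P(z)=\max_{w\in Y}d(w,z)=d(y^\star,z)$.

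First I would establish the primal lower bound. Since $x(y_+(z),z)$ minimizes the $(p-L)$-strongly convex map $x\mapsto K(x,z;y_+(z))$ and $d(y_+(z),z)=K(x(y_+(z),z),z;y_+(z))$, strong convexity gives
\[
K(x^*(z),z;y_+(z))-d(y_+(z),z)\ge \frac{p-L}{2}\,\|x^*(z)-x(y_+(z),z)\|^2 .
\]
Because $P(z)=\min_{x}\max_{w} K(x,z;w)=\max_{w}K(x^*(z),z;w)\ge K(x^*(z),z;y_+(z))$, this yields
\[
\frac{p-L}{2}\,\|x^*(z)-x(y_+(z),z)\|^2\le P(z)-d(y_+(z),z).
\]

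Next I would produce the dual upper bound on $P(z)-d(y_+(z),z)=d(y^\star,z)-d(y_+(z),z)$. By concavity of $d(\cdot,z)$ (Lemma~\ref{Lipschitz-dual}), $d(y^\star,z)-d(y_+(z),z)\le \langle \nabla_y d(y_+(z),z),\,y^\star-y_+(z)\rangle$. The only first-order information available is the projection identity $y_+(z)=P_Y(y+\alpha\nabla_y d(y,z))$, whose variational inequality gives $\langle \nabla_y d(y,z),\,w-y_+(z)\rangle\le \tfrac1\alpha\langle y_+(z)-y,\,w-y_+(z)\rangle$ for all $w\in Y$, in particular $w=y^\star$. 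Splitting
\[
\langle \nabla_y d(y_+(z),z),\,y^\star-y_+(z)\rangle=\langle \nabla_y d(y,z),\,y^\star-y_+(z)\rangle+\langle \nabla_y d(y_+(z),z)-\nabla_y d(y,z),\,y^\star-y_+(z)\rangle,
\]
I would bound the first term by the projection inequality and Cauchy--Schwarz, and the second by the $L_d$-Lipschitz continuity of $\nabla_y d$ with $L_d=L+L\sigma_2$ (Lemma~\ref{Lipschitz-dual}), obtaining $P(z)-d(y_+(z),z)\le \bigl(\tfrac1\alpha+L_d\bigr)\|y-y_+(z)\|\cdot\mathrm{dist}(y_+(z),Y(z))$. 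Combining with the primal lower bound, multiplying by $\alpha$, and finally using $\mathrm{dist}(y_+(z),Y(z))\le D(Y)$ delivers the claim (up to bookkeeping of the constant $1+\alpha L+\alpha L\sigma_2$).

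The main obstacle is the mismatch between the points at which the dual gradient is controlled: concavity naturally calls for $\nabla_y d(y_+(z),z)$, whereas the algorithmic projection only constrains $\nabla_y d(y,z)$. Bridging the two is exactly what forces the Lipschitz estimate on $\nabla_y d$, and it is also the reason the bound is \emph{nonhomogeneous}---the right-hand side is the product of the dual residual $\|y-y_+(z)\|$ with the optimality distance $\mathrm{dist}(y_+(z),Y(z))$ rather than a square of the residual. A secondary point requiring care is the choice $y^\star=P_{Y(z)}(y_+(z))$, which is precisely what converts the generic inner product $\langle\cdot,\,y^\star-y_+(z)\rangle$ into the distance factor; without first projecting onto $Y(z)$ one could only reach the coarser $D(Y)$ estimate.
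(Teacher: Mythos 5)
Your proposal is correct in substance but takes a genuinely different route from the paper's. The paper never touches the duality gap: it characterizes $y_+(z)$ as the \emph{exact} maximizer over $Y$ of the linearly perturbed concave function $\bar y\mapsto \alpha K(x(y_+(z),z),z;\bar y)-\delta^T\bar y$, where the perturbation $\delta=(y_+(z)+\alpha\nabla_y K(x(y_+(z),z),z;y_+(z)))-(y+\alpha\nabla_y K(x(y,z),z;y))$ satisfies $\|\delta\|\le(1+\alpha L+\alpha L\sigma_2)\|y-y_+(z)\|$, and it pairs this with \emph{two} strong-convexity inequalities---at $x^*(z)$ against $y_+(z)$, and at $x(y_+(z),z)$ against $y(z)\in Y(z)$, using $x^*(z)=x(y(z),z)$ from Lemma \ref{eqvl}---plus optimality of $y(z)$ for $K(x^*(z),z;\cdot)$, to reach $\alpha(p-L)\|x^*(z)-x(y_+(z),z)\|^2\le(1+\alpha L+\alpha L\sigma_2)\|y-y_+(z)\|\,\|y(z)-y_+(z)\|$ directly, then takes the infimum over $y(z)\in Y(z)$ to get the distance. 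Your sandwich of $P(z)-d(y_+(z),z)$ is more modular and uses the same ingredients in dual-function form (concavity of $d$, the projection variational inequality, and $L_d$-Lipschitzness of $\nabla_y d$ from Lemma \ref{Lipschitz-dual}; note $\alpha(\frac{1}{\alpha}+L_d)=1+\alpha L+\alpha L\sigma_2$ is exactly the paper's $\|\delta\|$ factor), but it provably costs a factor of $2$ in the constant: one-sided strong convexity gives only $\frac{p-L}{2}\|x^*(z)-x(y_+(z),z)\|^2\le P(z)-d(y_+(z),z)$, so your route ends with
\[
\alpha(p-L)\|x^*(z)-x(y_+(z),z)\|^2\le 2\bigl(1+\alpha L+\alpha L\sigma_2\bigr)\|y-y_+(z)\|\cdot\mathrm{dist}\bigl(y_+(z),Y(z)\bigr),
\]
rather than the stated constant, and this cannot be fixed within the pure duality-gap framework, because the quantity the paper bounds, $K(x(y_+(z),z),z;y(z))-K(x(y_+(z),z),z;y_+(z))$, dominates the duality gap (since $K(x(y_+(z),z),z;y(z))\ge d(y(z),z)=P(z)$ while $K(x(y_+(z),z),z;y_+(z))=d(y_+(z),z)$); to match the paper's constant you would bound this larger quantity and invoke the second strong-convexity inequality at $x(y_+(z),z)$. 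You flagged the constant, and the loss is harmless downstream (the lemma enters Theorem \ref{general} only through absorbed constants such as $\lambda_1$), so your argument does establish the lemma up to a factor of $2$; as a cosmetic point, your non-strict $\le$ is actually the safer relation, since the paper's strict $<$ degenerates when $y=y_+(z)$, where both sides vanish, and your choice $y^\star=P_{Y(z)}(y_+(z))$ is legitimate because $Y(z)$ is nonempty, convex and compact.
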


\begin{proof}
By the strong convexity of $K(\cdot, z; y)$, we have
\begin{eqnarray}\label{gdforx}
K(x^*(z), z,; y_+(z))-K(x(y_+(z), z) z; y_+(z))&\ge&\frac{p-L}{2}\|x(y_+(z), z)-x^*(z)\|^2\\
K(x(y_+(z), z), z; y(z))-K(x^*(z), z; y(z))&\ge&\frac{p-L}{2}\|x(y_+(z), z)-x^*(z)\|^2,
\end{eqnarray}
where $y(z)$ is an arbitrary vector in $Y(z)$. Notice that $y_+(z)$ is the maximizer of the following problem 
(inspired by the proof in \cite{zhang2022global})
:
$$\max_{\bar{y}\in Y}\{\alpha K(x(y_+(z), z), z; \bar{y})-\delta^T(y, y_+(z); z)\bar{y}\},$$
where
$$\delta(y, y_+(z); z)=(y_+(z)+\alpha \nabla_{\bar{y}}K(x(y_+(z), z), z; y_+(z)))-(y+\alpha \nabla_{\bar{y}} K(x(y,z), z),  z; y))$$
satisfies
$$\|\delta(y, y_+(z); z)\|<(1+\alpha L + \alpha L \sigma_2)\|y-y_+(z)\|,$$
by the Lipschitz-continuity of $\nabla_yK=\nabla_yf$.
Hence, we have
\begin{eqnarray*}
&&\alpha K(x(y_+(z), z), z; y(z))-\delta^T(y, y_+(z); z)y(z)\\
&\le &
\alpha K(x(y_+(z), z), z; y_+(z))-\delta^T(y, y_+(z); z)y_+(z).
\end{eqnarray*}
Then, we  have the following estimates:

\begin{eqnarray}\label{gdfory}
&& \alpha (K(x(y_+(z), z), z; y(z))-K(x(y_+(z), z), z; y_+(z)))\\
&\le&(y(z)-y_+(z))^T\delta(y, y_+(z); z)\\
&\le&\|y_+(z)-y(z)\|\cdot (1+\alpha L + \alpha L \sigma_2)\|y-y_+(z)\|.
\end{eqnarray}
Also because $y(z)$ maximizes
$$\max_{\bar{y}\in Y}K(x^*(z), \bar{y}; z),$$
we have
\begin{equation}\label{gdfory2}
K(x^*(z), z; y(z))\ge K(x^*(z), z; y_+(z)).
\end{equation}
Since $y(z)$ is an arbitrary vector in $Y(z)$, combining \eqref{gdforx}, \eqref{gdfory}, \eqref{gdfory2}, we have
$$\alpha (p-L)\|x^*(z)-x(y_+(z), z)\|^2< (1+\alpha L + \alpha L \sigma_2)  \|y-y_+(z)\|\cdot \mathrm{dist}(y_+(z), Y(z)),$$
which is the desired result.
\end{proof}

\subsection{Proof of  Lemma \ref{dual error bound-appe}}
For a pair of min-max solution of \eqref{minmax2}, the KKT conditions in the following  hold:
\begin{eqnarray}\label{KKTfororiginal-appe}
&&J^TF(x^*)y=0,\\
&&\sum_{i=1}^my_i=1,\\
&&y_i\ge 0,\forall i\in [m]\\
&&\mu-\nu_i=f_i(x), \forall i\in[m],\\
&&\nu_i\ge 0, \nu_iy_i=0, \forall i\in[m],
\end{eqnarray}
where $\mu$ is the multiplier of the equality constraint $\sum_{i=1}^my_i=1$ and $\nu_i$ is the multiplier for the inequality constraint $y_i\ge 0$.
\begin{definition}
For $y\in Y$, we define the active set
$$\mathcal{A}[y]=\{i\in[m]\mid y_i=0\}.$$
We also define the inactive set of $y$ as follows:
$$\mathcal{I}[y]=\{i\in[m]\mid y_i>0\}.$$
\end{definition}

\begin{definition}
For an $x\in \mathbb{R}^n$, we define the top coordinate set $\mathcal{T}(x
)$ as the collection of all indexes of the top coordinates of $F(x)$, i.e., $f_i(x)>f_j(x)$ if $i\in \mathcal{T}(x), j\notin\mathcal{T}(x)$ and $f_i(x)=f_j(x)$ if $i, j\in\mathcal{T}(x)$.
\end{definition}
According to the KKT conditions, it is easy to see that for $(x, y)\in W^*$,
$$\mathcal{I}[y]\subseteq \mathcal{T}(x).$$

Recall that we have the following strict complementarity condition:
\begin{assumption}\label{sc-appe}
For  any $(x, y)$ satisfying  \eqref{KKTfororiginal-appe}, we have
$$\nu_i>0, \forall i\in\mathcal{A}[y].$$
\end{assumption}
It is easy to see that if the strict complementarity assumption holds,
$$\mathcal{I}[y]= \mathcal{T}(x)$$
for $(x, y)\in W^*$.
Then we can prove the following ``dual error bound''.
\begin{lemma}
If the strict complementarity assumption holds for \eqref{minmax2} , there exists $\delta>0$, such that if
$$\|z\|\le R(x^0, y^0, z^0),$$
and
$$\max\{\|x-x_+(y, z)\|, \|y-y_+(z)\|, \|x_+(y, z)-z\|\}<\delta$$
we have
$$\|x(y_+(z), z)-x^*(z)\|< \sigma_5\|y-y_+(z)\|$$
for  some constant $\sigma_5>0$.
\end{lemma}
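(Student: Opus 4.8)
The plan is to upgrade the non-homogeneous ``weak'' bound already in hand (Lemma \ref{weak error bound-appe}) to the desired homogeneous one by supplying a \emph{dual sharpness} estimate of the form $\mathrm{dist}(y_+(z),Y(z))\le C_0\,\|x(y_+(z),z)-x^*(z)\|$, valid once we are close to the solution set. Indeed, Lemma \ref{weak error bound-appe} gives $\alpha(p-L)\|x^*(z)-x(y_+(z),z)\|^2\le(1+\alpha L+\alpha L\sigma_2)\,\|y-y_+(z)\|\cdot\mathrm{dist}(y_+(z),Y(z))$; substituting the sharpness estimate and cancelling one factor of $\|x^*(z)-x(y_+(z),z)\|$ (the inequality being trivial when this quantity vanishes) yields the claim with $\sigma_5=(1+\alpha L+\alpha L\sigma_2)C_0/(\alpha(p-L))$. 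So essentially all the work lies in the sharpness estimate, and this is exactly where strict complementarity is needed: for a general concave dual, the distance to the optimal face is not controlled by the primal gap.

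\textbf{Localization and face structure.} First I would use the smallness of the three residuals to place the iterate near $W^*$. By the primal error bounds of Lemma \ref{Lp_continue} together with Lemma \ref{eqvl} (continuity of $x^*(\cdot)$ and $z=x^*(z)\iff z\in X^*$), the conditions $\|x-x_+(y,z)\|,\|x_+(y,z)-z\|,\|y-y_+(z)\|<\delta$ force $x(y_+(z),z)$, $y_+(z)$ and $x^*(z)$ to lie within $O(\delta)$ of a KKT pair $(x^*,y^*)$ of \eqref{minmax2}. Under strict complementarity (Assumption \ref{sc-appe}) one has $\mathcal{I}[y^*]=\mathcal{T}(x^*)$ and a strict gap $f_i(x^*)<\max_j f_j(x^*)$ for every $i\notin\mathcal{T}(x^*)$. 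The optimal dual set $Y(z)$ is precisely the polyhedral face of simplex vectors supported on $\mathcal{T}(x^*(z))$ that satisfy the inner-minimization stationarity encoding $\nabla F(x^*(z))y=0$ (cf.\ \eqref{stationary}), a polyhedron whose coefficient matrix is $M(x^*(z))=\{J_{\mathcal{T}(x^*(z))},\mathbf{1}\}$. The strict gap drives the active-set identification: once $\delta$ is small, the coordinates of $y_+(z)$ outside $\mathcal{T}(x^*(z))$ receive strictly negative ascent drift and thus carry only higher-order mass, so $\mathrm{dist}(y_+(z),Y(z))$ is governed by on-face quantities.

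\textbf{From equation residual to primal gap via full rank.} On the identified face, strict complementarity guarantees through Lemma \ref{uniqueness} (equivalently Assumption \ref{regularity in main text}) that $M(x^*(z))$ has full column rank, i.e.\ $\gamma(M(x^*(z)))>0$. A Hoffman-type error bound for the polyhedron $Y(z)$ then controls $\mathrm{dist}(y_+(z),Y(z))$ by the amount by which $y_+(z)$ violates the defining (in)equalities; since $y_+(z)$ is a projected-gradient near-fixed-point of $d(\cdot,z)$ whose induced primal point is $x(y_+(z),z)$, this violation is $O(\|x(y_+(z),z)-x^*(z)\|)$ by the Lipschitz continuity of $\nabla F$ and by \eqref{eb2}. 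This produces the sharpness estimate $\mathrm{dist}(y_+(z),Y(z))\le C_0\,\|x(y_+(z),z)-x^*(z)\|$ and closes the argument at a fixed $z$.

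\textbf{Uniformity in $z$ — the main obstacle.} The hardest part will be making $\delta$ and $\sigma_5$ \emph{uniform} over the compact ball $\{z:\|z\|\le R\}$, because $\mathcal{T}(x^*(z))$ is combinatorial and may jump as $z$ varies, so a single local analysis does not obviously suffice. I would handle this either by observing that only finitely many top-sets can occur and that, on the (bounded, by Assumption \ref{boundedness}) solution set, both the strict gap and $\gamma(M(x^*(z)))$ stay bounded below by continuity of $x^*(\cdot)$ and of $F$; or, more cleanly, by contradiction: if no uniform $(\delta,\sigma_5)$ exist, take $z^k$ with all residuals tending to $0$ but $\|x(y_+^k,z^k)-x^*(z^k)\|/\|y^k-y_+(z^k)\|\to\infty$, extract convergent subsequences, and pass to a limiting KKT pair where the local estimate just established must hold, a contradiction. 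The delicate points of this route are ensuring the active sets stabilize along the subsequence and that strict complementarity, hence the full-rank property of $M$, is preserved in the limit.
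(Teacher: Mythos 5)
Your proposal is correct and follows essentially the same route as the paper: the paper likewise upgrades the weak (non-homogeneous) error bound of Lemma \ref{weak error bound-appe} by a dual sharpness estimate $\mathrm{dist}(y_+(z),Y(z))\le \lambda\|x^*(z)-x(y_+(z),z)\|$ (its Lemma \ref{inverse}), obtained by identifying the active set of $y_+(z)$ with that of some $y(z)\in Y(z)$ under strict complementarity (Lemma \ref{same active set}), subtracting the two stationarity systems with coefficient matrix $M(x^*(z))$, and invoking a uniform lower bound $\gamma(M(\cdot))\ge\gamma$ proved exactly by your compactness-and-contradiction argument over the finitely many possible top-sets (Lemma \ref{nonsingular}). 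Your Hoffman-type phrasing and the ``negative ascent drift'' heuristic are only cosmetic variants of the paper's explicit linear-system and subsequence-limit arguments, and your final constant $\sigma_5=(1+\alpha L+\alpha L\sigma_2)\lambda/(\alpha(p-L))$ is in fact the correct combination (the paper's stated $\sigma_5$ contains a typo).
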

To prove this, we
 need the following lemmas.
First, we prove that if the residuals go  to zero, the iteration  points converge to a solution.
\begin{lemma}\label{limit}
If $\{z^k\}$ is a sequence with $\|z^k\|\le R(x^0, y^0, z^0)$ and
$$\max\{\|x^k-x^k_+(y^k, z^k)\|, \|y^k-y^k_+(z^k)\|, \|x^k_+(y^k, z^k)-z^k\|\}\rightarrow 0,$$
there exists a sub-sequence of $\{z^k\}$ converging to some $\bar{z}\in X^*$.
\end{lemma}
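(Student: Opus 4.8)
The plan is to use boundedness to extract a subsequence of $\{z^k\}$ (together with the dual variables) that converges, to show that the joint limit $(\bar z,\bar y)$ is simultaneously primal-optimal and dual-optimal for the smoothed problem frozen at $\bar z$, and then to invoke Lemma~\ref{eqvl} to convert this into the statement $\bar z\in X^*$. First I would translate the vanishing residuals into statements about the exact minimizer $x(y^k,z^k)$. Since $\|x^k-x^k_+(y^k,z^k)\|\to 0$ and $K(\cdot,z^k;y^k)$ is strongly convex with modulus $p-L>0$, the error bound \eqref{eb3} (which is a property of the gradient-projection map for a strongly convex objective, hence valid for the general points $x^k$, not only algorithm iterates) gives $\|x^k_+(y^k,z^k)-x(y^k,z^k)\|\le \sigma_3\|x^k-x^k_+(y^k,z^k)\|\to 0$, so $\|x^k-x(y^k,z^k)\|\to 0$. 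Combining this with $\|x^k_+(y^k,z^k)-z^k\|\to 0$ and the triangle inequality yields $\|z^k-x(y^k,z^k)\|\to 0$.

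Next I would use compactness. The sequence $\{z^k\}$ lies in the ball of radius $R(x^0,y^0,z^0)$ and $\{y^k\}\subseteq Y$ is bounded, so I may pass to a subsequence along which $z^k\to\bar z$ and $y^k\to\bar y\in Y$. By the Lipschitz continuity of $x(\cdot,\cdot)$ in both arguments (Lemma~\ref{Lp_continue}, via $\sigma_1$ and $\sigma_2$), $x(y^k,z^k)\to x(\bar y,\bar z)$ along this subsequence; since $\|z^k-x(y^k,z^k)\|\to 0$ and $z^k\to\bar z$, I obtain the primal fixed-point relation $x(\bar y,\bar z)=\bar z$.

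For the dual side I would pass to the limit in $y^k_+(z^k)=P_Y\big(y^k+\alpha\nabla_y K(x(y^k,z^k),z^k;y^k)\big)$. Using continuity of $P_Y$, continuity of $\nabla_y K$, and the convergences just established, $y^k_+(z^k)\to P_Y\big(\bar y+\alpha\nabla_y K(\bar z,\bar z;\bar y)\big)$; because $\|y^k-y^k_+(z^k)\|\to 0$ the limit satisfies $\bar y=P_Y\big(\bar y+\alpha\nabla_y d(\bar y,\bar z)\big)$, where I used $\nabla_y d(y,z)=\nabla_y K(x(y,z),z;y)$ (Lemma~\ref{Lipschitz-dual}) together with $x(\bar y,\bar z)=\bar z$. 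Since $d(\cdot,\bar z)$ is concave (an infimum of functions concave in $y$) and $Y$ is convex, this projection fixed-point condition forces $\bar y$ to maximize $d(\cdot,\bar z)$, i.e. $\bar y\in Y(\bar z)$. Finally, Lemma~\ref{eqvl} states $x^*(z)=x(y,z)$ for any $y\in Y(z)$; applying it at $z=\bar z$, $y=\bar y$ gives $x^*(\bar z)=x(\bar y,\bar z)=\bar z$, and the equivalence $z=x^*(z)\iff z\in X^*$ from the same lemma then yields $\bar z\in X^*$, which is the claim.

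The genuinely delicate parts are the two limiting identities rather than the compactness bookkeeping. The main obstacle is justifying the passage to the limit through the composition $P_Y$-after-gradient so as to recover the exact dual fixed-point, and, just as importantly, confirming that the strong-convexity error bound \eqref{eb3} applies to the arbitrary points $x^k$ (which are assumed only to make the three residuals small, not to be iterates) so that $\|x^k-x(y^k,z^k)\|\to 0$. Once these two identities are in hand, the remaining steps---joint continuity of $x(\cdot,\cdot)$, concavity of $d(\cdot,\bar z)$, and the two invocations of Lemma~\ref{eqvl}---are routine.
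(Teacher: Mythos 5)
Your proposal is correct, and it takes a genuinely different route from the paper. The paper disposes of this lemma in one line: it is a direct corollary of Lemma~\ref{trivial1}, which converts the three vanishing residuals into the statement that $(x^k_+(y^k,z^k),\,y^k_+(z^k))$ is a $\bar\lambda\epsilon_k$-stationary pair of the \emph{original} problem with $\epsilon_k\to 0$; compactness of the ball $\mathcal{B}(R(x^0,y^0,z^0))$ and of $Y$ then gives a subsequential limit $(\bar z,\bar y)$ at which the stationarity residual is zero (using $\|x^k_+(y^k,z^k)-z^k\|\to 0$ to identify the primal limit with $\bar z$, and closedness of the normal-cone conditions), so $\bar z\in X^*$. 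You instead work entirely inside the smoothed machinery: you pass to limits in the exact-minimizer map $x(y,z)$ to get the fixed-point relation $x(\bar y,\bar z)=\bar z$, pass to limits through the projected-gradient dual step to get $\bar y=P_Y(\bar y+\alpha\nabla_y d(\bar y,\bar z))$, use concavity of $d(\cdot,\bar z)$ to conclude $\bar y\in Y(\bar z)$, and then invoke Lemma~\ref{eqvl} twice ($x^*(\bar z)=x(\bar y,\bar z)=\bar z$, and $z=x^*(z)\iff z\in X^*$). Both arguments are sound. The paper's route is shorter given that Lemma~\ref{trivial1} is already proved, and it additionally exhibits the dual part $\bar y$ of a solution pair in $W^*$; your route avoids any appeal to closedness of subdifferential graphs and instead leans on the continuity properties of $x(\cdot,\cdot)$ from Lemma~\ref{Lp_continue} and the characterization in Lemma~\ref{eqvl}. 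The caveat you flag about \eqref{eb3} is real but resolves in your favor: the inequality is a property of one projected-gradient step on the $(p-L)$-strongly convex function $K(\cdot,z;y)$ and holds for arbitrary base points, not just iterates; note the paper implicitly relies on the same fact when it applies Lemma~\ref{trivial1} (whose proof uses \eqref{eb3} via Lemma~\ref{yplus-appe}) to sequences that are not assumed to be algorithm iterates.
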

\begin{proof}
It is just a direct corollary of Lemma \ref{trivial1}.
\end{proof}
\begin{lemma}\label{uniqueness}
Let
$$M(x)=\begin{Bmatrix}
J_{\mathcal{T}(x)}F(x)&\mathbf{1}
\end{Bmatrix}.$$
Then if $(x, y)\in W^*$, the matrix $M(x)$ is of full row rank.
\end{lemma}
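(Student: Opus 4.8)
The plan is to prove Lemma~\ref{uniqueness} by contradiction, using the strict complementarity assumption (Assumption~\ref{sc-appe}) to forbid any nontrivial left null vector of $M(x)$; geometrically, such a null vector would generate a whole segment of dual optima and thereby produce a KKT point at which strict complementarity fails.

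First I would unpack what being a solution means under strict complementarity. For $(x,y)\in W^*$ the KKT system \eqref{KKTfororiginal-appe} holds, and combining $\nu_iy_i=0$ with $\mu-\nu_i=f_i(x)$ gives $y_i>0\Rightarrow\nu_i=0\Rightarrow f_i(x)=\mu$, while Assumption~\ref{sc-appe} gives $y_i=0\Rightarrow\nu_i>0\Rightarrow f_i(x)<\mu$. Hence $\mathcal{I}[y]=\mathcal{T}(x)$, so $y$ is strictly positive on $\mathcal{T}(x)$ and vanishes off it, and the stationarity condition reads $\sum_{i\in\mathcal{T}(x)}y_i\nabla f_i(x)=0$ with $\sum_{i\in\mathcal{T}(x)}y_i=1$. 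Since the $i$-th row of $M(x)$ is $(\nabla f_i(x)^{\transp},1)$ for $i\in\mathcal{T}(x)$, a left null vector $c\neq 0$ of $M(x)$ is precisely a nonzero $c$ (supported on $\mathcal{T}(x)$) satisfying $\sum_i c_i\nabla f_i(x)=0$ and $\sum_i c_i=0$.

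Assume such a $c$ exists and consider the perturbed dual $y_t=y+tc$. Because $c$ lies in this kernel, $y_t$ satisfies $\sum_i (y_t)_i\nabla f_i(x)=0$ and $\sum_i (y_t)_i=1$ for every $t$, and since $y$ is strictly positive on $\mathcal{T}(x)$, $y_t$ stays in the simplex for all small $t>0$. As $\sum_i c_i=0$ and $c\neq 0$, the vector $c$ must have a negative entry, so there is a first time $t^\ast>0$ at which some coordinate $i_0\in\mathcal{T}(x)$ reaches $(y_{t^\ast})_{i_0}=0$. I would then check that $(x,y_{t^\ast})$ still satisfies the entire KKT system \eqref{KKTfororiginal-appe} with the same multipliers $\mu,\nu$ as at $(x,y)$—only $y$ has moved, and each condition involving $y$ continues to hold—so it is a bona fide KKT point. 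But $i_0\in\mathcal{A}[y_{t^\ast}]$ together with $i_0\in\mathcal{T}(x)$ forces $\nu_{i_0}=0$, contradicting the requirement $\nu_{i_0}>0$ of Assumption~\ref{sc-appe}. This contradiction shows no nonzero $c$ exists, whence $M(x)$ has full row rank (in particular $|\mathcal{T}(x)|\le n+1$).

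I expect the only delicate point to be the verification in the last step that the boundary dual $y_{t^\ast}$ is an \emph{exact} KKT point and not merely feasible, since this is what licenses invoking strict complementarity at that point; the argument crucially relies on Assumption~\ref{sc-appe} being imposed at every point satisfying \eqref{KKTfororiginal-appe}, rather than only at the solution $(x,y)$ we started from. Everything else reduces to the elementary observation that a left null vector of $M(x)$ is a feasible dual direction that slides $y$ along a segment of optima until it exits the relative interior of its active face.
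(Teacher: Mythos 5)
Your proof is correct and follows essentially the same route as the paper's: both argue by contradiction, take a nonzero $v$ with $M^{T}(x)v=0$, slide the dual variable $y$ along this direction within the simplex until some coordinate indexed by $\mathcal{T}(x)$ first vanishes, check that the perturbed pair still satisfies the full KKT system \eqref{KKTfororiginal-appe} with the same multipliers $(\mu,\nu)$, and then contradict the strict complementarity assumption at that new KKT point. Your explicit verifications that $\mathcal{I}[y]=\mathcal{T}(x)$ under strict complementarity and that the boundary dual is an exact KKT point (not merely feasible) simply spell out steps the paper states more tersely.
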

\begin{proof}
We prove it by contradiction. If for some $(x^*, y^*, \mu^*, \nu^*)$ satisfying \eqref{KKTfororiginal-appe}, $M(x^*)$ is not of full row rank. Without loss of generality, we assume that $\mathcal{T}(x^*)=\{1, 2, \cdots, |\mathcal{T}(x^*)|\}$
Then there exists a nonzero vector $v\in \mathbb{R}^{|\mathcal{T}(x^*)|}$ such that
$$M^T(x^*)v=0.$$
Let $d=\min_{i\in\mathcal{I}[y^*]}\{y_i/|v_i|\}$. Then we define a vector $y'\in\mathbb{R}^m$ as:
\begin{eqnarray*}
y_i'=y^*-dv_i, &when&\ i\in \mathcal{I}[y^*];\\
y_i'=0,&when& \ otherwise.
\end{eqnarray*}
Notice that $y^*_i=0$ for any $i\notin \mathcal{T}(x^*)$. Then $y'$ satisfies
\begin{eqnarray*}
&&J^TF(x^*)y'=0,\\
&&\sum_{i=1}^my_i'=1,\\
&&y_i'\ge 0, i\in \mathcal{T}(x^*),\\
&&y_i'=0, i\notin\mathcal{T}(x^*).
\end{eqnarray*}

Therefore, $(x^*, y', \mu, \nu)$ still satisfies \eqref{KKTfororiginal-appe} .
Moreover, let $i_0\in\mathcal{I}[y^*]$ satisfying $d=y_{i_0}^*/v_{i_0}$.
Then $y_{i_0}'=\nu_{i_0}=0$. This is a contradiction to the strict complementarity assumption.
\end{proof}
We then have the following corollary from the above lemma and \eqref{KKTfororiginal-appe}:
\begin{corollary}\label{uniqueness1}
For any $x^*\in X^*$, there exists only one $y\in Y$ such that $(x^*, y)\in W^*$ and there exists only one $(\mu, \nu)$ such that $(x^*, y, \mu, \nu)$ satisfies \eqref{KKTfororiginal-appe}.
\end{corollary}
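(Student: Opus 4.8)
The plan is to prove both uniqueness statements by first pinning down the support of any dual optimal $y$ and then reducing each claim to an injectivity argument. Existence is not at issue: by the definition of $X^*$ every $x^*\in X^*$ admits some $y^*$ with $(x^*,y^*)\in W^*$, and the associated multipliers $(\mu^*,\nu^*)$ exist because the KKT system \eqref{KKTfororiginal-appe} holds at every element of $W^*$. So the entire content is uniqueness, and I would organize it as ``uniqueness of $y$'' followed by ``uniqueness of $(\mu,\nu)$''.

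First I would establish uniqueness of $y$. Fix $x^*\in X^*$ and suppose $(x^*,y)$ and $(x^*,y')$ both lie in $W^*$. Under strict complementarity (Assumption \ref{sc-appe}) we have already observed that $\mathcal{I}[y]=\mathcal{T}(x^*)$ for every $(x^*,y)\in W^*$; hence both $y$ and $y'$ are supported exactly on $\mathcal{T}(x^*)$, a set depending only on $x^*$. Writing $y_{\mathcal{T}(x^*)}$ for the restriction to this support, the stationarity condition $J^TF(x^*)y=0$ reads $\big(J_{\mathcal{T}(x^*)}F(x^*)\big)^{T}y_{\mathcal{T}(x^*)}=0$ and the simplex constraint reads $\mathbf{1}^{T}y_{\mathcal{T}(x^*)}=1$. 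Stacking these two gives the single linear system
$$M(x^*)^{T}\,y_{\mathcal{T}(x^*)}=\begin{pmatrix}\mathbf{0}\\ 1\end{pmatrix},$$
and likewise for $y'$. By Lemma \ref{uniqueness} the matrix $M(x^*)$ has full row rank, so $M(x^*)^{T}$ has full column rank and the map $v\mapsto M(x^*)^{T}v$ is injective; therefore the system has at most one solution, forcing $y_{\mathcal{T}(x^*)}=y'_{\mathcal{T}(x^*)}$. Since $y$ and $y'$ vanish off $\mathcal{T}(x^*)$, we conclude $y=y'$.

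Next I would treat $(\mu,\nu)$, which is the easier half. Fix the unique $y$ just found. For each $i\in\mathcal{I}[y]=\mathcal{T}(x^*)$ we have $y_i>0$, so complementary slackness $\nu_iy_i=0$ forces $\nu_i=0$, and then $\mu-\nu_i=f_i(x^*)$ gives $\mu=f_i(x^*)$. By the definition of the top coordinate set, all indices in $\mathcal{T}(x^*)$ share the same value $f_i(x^*)$, so $\mu$ is well-defined and uniquely pinned to that common value. For $i\notin\mathcal{T}(x^*)$ the relation $\mu-\nu_i=f_i(x^*)$ then determines $\nu_i=\mu-f_i(x^*)$ uniquely, and one checks consistency (namely $\nu_i\ge 0$ and $\nu_iy_i=0$) using $f_i(x^*)<\mu$ and $y_i=0$ off $\mathcal{T}(x^*)$. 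Hence $(\mu,\nu)$ is unique.

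The main obstacle is not the linear algebra but justifying that the support of any dual optimal $y$ equals $\mathcal{T}(x^*)$ exactly: without this equality one cannot align the stationarity and simplex constraints into the square-ish system governed by $M(x^*)$, nor invoke the full-rank Lemma \ref{uniqueness}. This is precisely where strict complementarity enters, upgrading the generic inclusion $\mathcal{I}[y]\subseteq\mathcal{T}(x^*)$ to equality so that the number of free dual variables matches the number of columns of $M(x^*)^{T}$ and injectivity applies. Everything else is routine once the support is fixed.
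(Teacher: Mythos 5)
Your proof is correct and takes essentially the same route as the paper: reduce uniqueness of $y$ to the linear system $M^T(x^*)\,y_{\mathcal{T}(x^*)}=(0,\cdots,0,1)^T$ and invoke the full-rank Lemma \ref{uniqueness}, then pin $\mu$ down via a coordinate with $y_i>0$ and determine $\nu$ from the remaining KKT equations. One minor remark: the inclusion $\mathcal{I}[y]\subseteq\mathcal{T}(x^*)$, which follows from the KKT conditions alone, already suffices to align the stationarity and simplex constraints into that linear system (the restricted vector may simply have zero entries), so strict complementarity is really needed only through Lemma \ref{uniqueness}, not for upgrading the inclusion to the equality emphasized in your closing paragraph.
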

\begin{proof}
First, this $(y, \mu, \nu)$ must exist due to the existence of a solution.
Next, the solution $y$ must satisfy
$$M^T(x^*)y=(0, 0, \cdots, 0, 1)^T.$$
By  Lemma \ref{uniqueness}, $M^T(x^*)$ is of full column rank hence the solution of $y$ is unique.
Furthermore, since $\sum_{i=1}^my_i=1$, there is at least one $i$ such that $y_i>0, \nu_i=0$. Without loss of generality, we assume that $y_1>0, \nu_1=0$. Then  $\mu=f_1(x^*)$ by \eqref{KKTfororiginal}.
 Further by \eqref{KKTfororiginal}, $\nu_i=f_i(x^*)-\mu, i=2, 3, \cdots, m$.
Hence, $\mu, \nu_i$ are uniquely defined.
\end{proof}
\begin{lemma}\label{nonsingular}
If the strict complementarity assumption holds for \eqref{minmax2} , there exists $\delta>0, \gamma>0$, such that if
$$\|z\|\le R(x^0, y^0, z^0),$$
and
$$\max\{\|x-x_+(y, z)\|, \|y-y_+(z)\|, \|x_+(y, z)-z\|\}<\delta$$
$\gamma(M(x^*(z)))\ge \gamma$ and $\gamma(M(x(y, z)))\ge \gamma$.
\end{lemma}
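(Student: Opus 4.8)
The plan is to argue by contradiction and exploit the boundedness of $\{z\}$, so that the desired uniform lower bound on the smallest singular value is reduced to a purely local statement near a single limit point. Suppose no pair $(\delta,\gamma)$ as claimed exists. Then, taking $\delta=\gamma=1/k$, there is a sequence $(x^k,y^k,z^k)$ with $\|z^k\|\le R(x^0,y^0,z^0)$, with
$\max\{\|x^k-x_+(y^k,z^k)\|,\|y^k-y_+(z^k)\|,\|x_+(y^k,z^k)-z^k\|\}\to 0$,
and with either $\gamma(M(x^*(z^k)))\to 0$ or $\gamma(M(x(y^k,z^k)))\to 0$. Passing to a subsequence, I may assume one of these two alternatives holds for every $k$; I treat the second, the first being identical.

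First I would show that all the relevant points converge to one and the same solution. Since $\|z^k\|\le R$, a subsequence satisfies $z^k\to\bar z$; as the residuals vanish, Lemma~\ref{limit} forces $\bar z\in X^*$, and Lemma~\ref{eqvl} then gives $\bar z=x^*(\bar z)=:\bar x$. Continuity of $x^*(\cdot)$ (Lemma~\ref{eqvl}) yields $x^*(z^k)\to\bar x$. For the exact minimizer, the primal error bound \eqref{eb3} shows $\|x_+(y^k,z^k)-x(y^k,z^k)\|\le\sigma_3\|x^k-x_+(y^k,z^k)\|\to 0$; combined with $\|x_+(y^k,z^k)-z^k\|\to 0$ and $z^k\to\bar z$ this gives $x(y^k,z^k)\to\bar x$ as well. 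Thus both $x^*(z^k)$ and $x(y^k,z^k)$ lie in an arbitrarily small neighborhood of the solution point $\bar x$ for all large $k$, and since $\bar x\in X^*$ there is $\bar y$ with $(\bar x,\bar y)\in W^*$.

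The heart of the argument, and its main obstacle, is that $M$ depends on $x$ only through the top-coordinate set $\mathcal{T}(x)$, which is a discontinuous set-valued function, so continuity of $M$ cannot be invoked directly. I would resolve this as follows. Let $\eta=\min\{f_i(\bar x)-f_k(\bar x):i\in\mathcal{T}(\bar x),\;k\notin\mathcal{T}(\bar x)\}>0$ be the gap separating the top coordinates at $\bar x$. By continuity of the $f_i$ there is $r>0$ with $f_i(x)>f_k(x)$ for all $i\in\mathcal{T}(\bar x)$, $k\notin\mathcal{T}(\bar x)$ whenever $\|x-\bar x\|<r$, whence $\mathcal{T}(x)\subseteq\mathcal{T}(\bar x)$ on this ball. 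For each nonempty $S\subseteq\mathcal{T}(\bar x)$ let $M_S(x)=(J_S(x)\;\mathbf 1)$ be the row-submatrix of $M$ indexed by $S$, a continuous function of $x$. At $x=\bar x$ the rows of $M_S(\bar x)$ form a subset of the rows of $M(\bar x)$, which are linearly independent by Lemma~\ref{uniqueness} (this is exactly where the strict complementarity Assumption~\ref{sc-appe} is used). A subset of linearly independent vectors is linearly independent, so $\gamma(M_S(\bar x))>0$, and by continuity of $J_S(\cdot)$ and of the smallest singular value, $\gamma(M_S(x))\ge\gamma_S>0$ on a ball around $\bar x$. As $\mathcal{T}(\bar x)$ has only finitely many subsets, $\gamma_{\bar x}:=\min_S\gamma_S>0$; shrinking $r$ so that this ball lies inside all of the previous ones, every $x$ with $\|x-\bar x\|<r$ satisfies $M(x)=M_{\mathcal{T}(x)}(x)$ with $\mathcal{T}(x)\subseteq\mathcal{T}(\bar x)$, hence $\gamma(M(x))\ge\gamma_{\bar x}$. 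Applying this to $x=x(y^k,z^k)$ for large $k$ contradicts $\gamma(M(x(y^k,z^k)))\to 0$, and the case $\gamma(M(x^*(z^k)))\to 0$ is handled identically, which proves the lemma. The one nonroutine point is the row-subset observation used to guarantee $\gamma_{\bar x}>0$; everything else is compactness and continuity.
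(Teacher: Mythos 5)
Your proof is correct and follows essentially the same route as the paper's: a contradiction argument in which compactness plus Lemmas \ref{limit} and \ref{eqvl} force the relevant points to a single solution $\bar x$, upper semicontinuity of the top-coordinate set gives $\mathcal{T}(x)\subseteq\mathcal{T}(\bar x)$ near $\bar x$, and Lemma \ref{uniqueness} (full row rank under strict complementarity) together with continuity of the smallest singular value produces the contradiction. The only differences are cosmetic: the paper uses the finiteness of possible top-sets to pass to a subsequence with constant $\mathcal{T}$ and shows the limit matrix is a singular row-submatrix of $M(\bar z)$, whereas you establish a uniform local lower bound $\gamma(M(x))\ge\gamma_{\bar x}>0$ by minimizing over all subsets $S\subseteq\mathcal{T}(\bar x)$, and you additionally spell out the $x(y,z)$ case (via \eqref{eb3}) that the paper omits as ``similar.''
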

\begin{proof}
We prove it by contradiction. Suppose it is not true, there exists $\{z^k\}\subseteq \mathcal{B}(R(x^0, y^0, z^0))$ such that $\gamma(M(x^*(z^k)))\rightarrow 0$ and
$$\max\{\|x^k-x^k_+(y^k, z^k)\|, \|y^k-y^k_+(z^k)\|, \|x^k_+(y^k, z^k)-z^k\|\}\rightarrow 0.$$

Since $\mathcal{T}(x)$ has only finite choice, without loss of generality, we assume that $\mathcal{T}(x^*(z^k))=\mathcal{T}$ for any $k$(passing to a sub-sequence if necessary).
By Lemma \ref{limit}, there exists a $\bar{z}\in X^*$ such that $z^k\rightarrow \bar{z}$.
We let
$$\tilde{M}(\bar{z})=\lim_{k\rightarrow \infty}M(x^*(z^k))=
\begin{Bmatrix}
J_{\mathcal{T}}(x^*(\bar{z}))&\mathbf{1}
\end{Bmatrix}.$$

By the continuity of $x^*(\cdot)$(\eqref{eb11} of Lemma \ref{Lp_continue}) and the continuity of the function of taking the least singular value, we know that
$$\gamma(\tilde{M}(\bar{z}))=0,$$

where we also use the fact that $x^*(\bar{z})=\bar{z}$ by Lemma \ref{eqvl}.
Moreover, according to the definition of $\mathcal{T}[\bar{z}]$, we have $f_i(x^*(z^k))>f_j(x^*(z^k))$ for any $k$ with $i\in \mathcal{T}, j\notin \mathcal{T}$.
Therefore, we have
$f_i(\bar{z})\ge f_j(\bar{z})$ for $i\in \mathcal{T}, j\notin \mathcal{T}$.
Consequently, we have
$$\mathcal{T}\subseteq \mathcal{T}[\bar{z}].$$

Therefore $\tilde{M}(x^*(\bar{z}))$ is a row sub-matrix of  $M(\bar{z})$. Consequently, $M(\bar{z})$ is not of full row rank. This is a contradiction!
For $x(y, z)$, it is similar to prove the desired result. Hence the details are omitted.
\end{proof}
The following lemma shows that if the residuals are small, the active set of $y_+(z)$ and $y(z)\in Y(z)$ are the same.
\begin{lemma}\label{same active set}
If the strict complementarity assumption holds for \eqref{minmax2} , there exists $\delta>0$, such that if
$$\|z\|\le R(x^0, y^0, z^0),$$
and
$$\max\{\|x-x_+(y, z)\|, \|y-y_+(z)\|, \|x_+(y, z)-z\|\}<\delta,$$
we have
$$\mathcal{A}[y_+(z)]=\mathcal{A}[y(z)], for\ some\ y(z)\in Y(z).$$
\end{lemma}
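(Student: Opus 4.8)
The plan is to argue by contradiction, following the compactness scheme already used for Lemma~\ref{nonsingular} and Lemma~\ref{limit}. Suppose no $\delta$ works: then there is a sequence $\{z^k\}$ with $\|z^k\|\le R(x^0,y^0,z^0)$ whose three residuals $\|x^k - x_+(y^k,z^k)\|$, $\|y^k - y_+(z^k)\|$, $\|x_+(y^k,z^k)-z^k\|$ tend to $0$, yet $\mathcal{A}[y_+(z^k)] \ne \mathcal{A}[y(z^k)]$ for every $y(z^k)\in Y(z^k)$. Passing to subsequences, I would first invoke Lemma~\ref{limit} to get $z^k \to \bar z \in X^*$, use compactness of $Y$ to assume $y^k \to \bar y$ (hence $y_+(z^k)\to\bar y$ by the vanishing residual), and use finiteness of the possible active sets to assume $\mathcal{A}[y_+(z^k)] = \mathcal{A}$ is constant. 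By continuity of $x(\cdot,\cdot)$ and $x^*(\cdot)$ (Lemma~\ref{Lp_continue}) together with Lemma~\ref{eqvl}, I would identify $x(\bar y,\bar z)=x^*(\bar z)=\bar z$, and by passing to the limit in $\bar y = P_Y(\bar y + \alpha \nabla_y K(x(\bar y,\bar z),\bar z;\bar y))$ conclude that $\bar y\in Y(\bar z)$ and that $(\bar z,\bar y)$ is a min-max solution, so the KKT system \eqref{KKTfororiginal-appe} and strict complementarity (Assumption~\ref{sc-appe}) hold at $(\bar z,\bar y)$; in particular $\mathcal{I}[\bar y]=\mathcal{T}(\bar z)$ and $\nu_i>0$ for every $i\in\mathcal{A}[\bar y]$.

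The second step identifies $\mathcal{A}[y_+(z^k)]$. For \eqref{minmax2} we have $\nabla_y K(x(y,z),z;y)=F(x(y,z))$, so $y_+(z^k)=P_Y(w^k)$ with $w^k=y^k+\alpha F(x(y^k,z^k))$, and the simplex projection is the soft-threshold $P_Y(w)_i=(w_i-\tau(w))_+$ for a continuous threshold $\tau(\cdot)$. Since $w^k\to\bar w=\bar y+\alpha F(\bar z)$ and $P_Y(\bar w)=\bar y$, I would read off $\tau(\bar w)=\alpha\mu$ (with $\mu=\max_j f_j(\bar z)$) from any inactive coordinate, and then for each $i\in\mathcal{A}[\bar y]$ compute $w^k_i-\tau(w^k)\to \alpha(\mu-\nu_i)-\alpha\mu=-\alpha\nu_i<0$, where the strict negativity is exactly strict complementarity. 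Hence $y_+(z^k)_i=0$ eventually, giving $\mathcal{A}[\bar y]\subseteq\mathcal{A}$; conversely $y_+(z^k)_i=0$ for $i\in\mathcal{A}$ together with $y_+(z^k)\to\bar y$ forces $\bar y_i=0$, so $\mathcal{A}\subseteq\mathcal{A}[\bar y]$, and therefore $\mathcal{A}[y_+(z^k)]=\mathcal{A}[\bar y]$ for all large $k$.

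The third step identifies $\mathcal{A}[y(z^k)]$. By Lemma~\ref{nonsingular} the matrix $M(x^*(z^k))$ has full row rank near $\bar z$, so the linear characterization of $Y(z)$ (namely $J(x^*(z))^\transp y=-p(x^*(z)-z)$, $\mathbf{1}^\transp y=1$, $y\ge 0$, $\operatorname{supp}(y)\subseteq\mathcal{T}(x^*(z))$) has a unique solution, as in Corollary~\ref{uniqueness1}; thus $Y(z^k)=\{y(z^k)\}$ and $y(z^k)\to\bar y$ by continuity. For $i\in\mathcal{I}[\bar y]$ positivity is preserved, while for $i\in\mathcal{A}[\bar y]=[m]\setminus\mathcal{T}(\bar z)$ the strict inequality $f_i(\bar z)<\mu$ and continuity of $x^*(\cdot)$ give $i\notin\mathcal{T}(x^*(z^k))$ for large $k$, forcing $y(z^k)_i=0$; hence $\mathcal{A}[y(z^k)]=\mathcal{A}[\bar y]$. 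Combining the two steps, $\mathcal{A}[y_+(z^k)]=\mathcal{A}[\bar y]=\mathcal{A}[y(z^k)]$ for all large $k$, contradicting the defining property of the sequence. I expect the main obstacle to be the projection-threshold analysis of the active coordinates in the second step: one must show these coordinates are exactly zero, not merely small, and this is precisely where the strict gap $\nu_i>0$ furnished by strict complementarity is indispensable; a secondary delicate point is verifying that the limit $(\bar z,\bar y)$ is a genuine KKT solution, so that Assumption~\ref{sc-appe} is actually available at the limit.
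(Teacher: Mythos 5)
Your proof is correct, and it shares the paper's overall skeleton: argue by contradiction, extract subsequences, identify the limit $(\bar z,\bar y)$ as a genuine KKT point of \eqref{minmax2} via Lemmas \ref{limit}, \ref{Lp_continue} and \ref{eqvl}, then let strict complementarity force exact active-set identification for large $k$. Where you genuinely diverge is the mechanism of the central step. The paper writes the projection step defining $y^k_+(z^k)$ as the KKT system \eqref{KKT1} with explicit multipliers $(\mu^k,\nu^k)$, asserts their boundedness, passes to limits $(\bar\mu,\bar\nu)$, and concludes from $\nu^k_i\to\bar\nu_i>0$ together with the complementarity $\nu^k_i\,(y^k_+(z^k))_i=0$ that the active coordinates vanish exactly for large $k$. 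Your soft-threshold computation $w^k_i-\tau(w^k)\to-\alpha\nu_i<0$ is the concrete, simplex-specific instantiation of that multiplier limit, with $\tau(w^k)$ playing the role of $\alpha\mu^k$ up to vanishing terms. The paper's abstract version would survive replacing the simplex by another polyhedral $Y$, at the price of the multiplier-boundedness check it asserts without proof; yours buys explicitness, needing only the standard continuity of the simplex threshold $\tau(\cdot)$, which you should state you are invoking. You also do strictly more work than the paper on the second half: where the paper dispatches $\mathcal{A}[y(z^k)]=\mathcal{A}[\bar y]$ with a bare ``similarly'', you supply a real argument via Lemma \ref{nonsingular} and the linear-system characterization of $Y(z)$ (as in Corollary \ref{uniqueness1}), which makes $Y(z^k)$ a singleton, plus the top-set stability $f_i(\bar z)<\mu\Rightarrow i\notin\mathcal{T}(x^*(z^k))$. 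Two small points to tighten, neither a gap of substance: the convergence $y(z^k)\to\bar y$ follows not from bare ``continuity'' but from the closed graph (upper semicontinuity) of $z\mapsto Y(z)$ combined with $Y(\bar z)=\{\bar y\}$, the latter again a consequence of Lemma \ref{uniqueness}; and reading off $\tau(\bar w)=\alpha\mu$ implicitly uses $\mathcal{I}[\bar y]\neq\emptyset$ (immediate from $\mathbf{1}^T\bar y=1$) and $f_i(\bar z)=\mu$ on $\mathcal{I}[\bar y]$, which come from the limit KKT system.
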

\begin{proof}
We prove it by contradiction. Suppose that there exists a sequence $\{(x^k, y^k, z^k)\}$ such that
$$\max\{\|x^k-x^k_+(y^k, z^k)\|, \|y^k-y^k_+(z^k)\|, \|x^k_+(y^k, z^k)-z^k\|\}\rightarrow 0$$
and
$$\mathcal{A}[y^k_+(z^k)]\ne \mathcal{A}[y(z^k)].$$
Since $\{y^k_+(z^k)\}, \{z^k\}$ are bounded, we assume that $y^k_+(z^k)\rightarrow \bar{y}, z^k\rightarrow \bar{z}$.
We write down the KKT condition for $(x(y^k_+(z^k), z^k), y^k_+(z^k))$ as follows:
\begin{eqnarray}\label{KKT1}
&&J^TF(x(y^k_+(z^k), z^k))y^k_+(z^k)+p(x(y^k_+(z^k), z^k)-z^k)=0,\\
&&\sum_{i=1}^m(y_i^k)_+(z^k)=1,\\
&&(y_i^k)_+(z^k)\ge 0,\forall i\in [m]\\
&&\frac{1}{\alpha}(y_i^k)_+(z^k)-\frac{1}{\alpha}y_i^k+ f_i(x(y^k, z^k))+\mu^k-\nu^k_i=f_i(x(y^k_+(z^k), z^k)), \forall i\in[m],\\
&&\nu^k_i\ge 0, \nu^k_i(y_i^k)_+(z^k)=0, \forall i\in[m],
\end{eqnarray}
It is not hard to check that $\mu, \nu$ are bounded.  Hence, we assume that $\mu^k\rightarrow \bar{\mu}$ and $\nu^k\rightarrow \bar{\nu}$.
We take limit to  \eqref{KKT1} and make use of  the fact that
$$\|y^k-y^k_+(z^k)\|\rightarrow 0$$
together with Lemma \ref{Lp_continue}. We then attain that $(x(\bar{y}, \bar{z}), \bar{y})$  is a min-max solution of \eqref{minmax2}, i.e., $(x(\bar{y}, bar{z}), \bar{y}, \bar{\mu}, \bar{\nu})$ satisfies \eqref{KKTfororiginal-appe}.
By the strict complementarity assumption, $\bar{\nu_i}>0$ for $i\in \mathcal{A}[\bar{y}]$  and $\bar{y}_i>0$ for $i\notin \mathcal{A}[\bar{y}]$. Hence, for $k$ sufficiently large, we have $\mathcal{A}[y^k_+(z^k)]=\mathcal{A}[\bar{y}]$.
Similarly, when $k$ is sufficiently large, we have
$$\mathcal{A}[y(z^k)]=\mathcal{A}[\bar{y}].$$

\end{proof}

We also write down the KKT conditions for $x^*(z)$ for some $z$.
\begin{eqnarray}\label{KKTforproximal}
&&J^TF(x^*(z))y+p(x^*(z)-z)=0,\\
&&\sum_{i=1}^my_i=1,\\
&&y_i\ge 0,\forall i\in [m]\\
&&\mu-\nu_i=f_i(x), \forall i\in[m],\\
&&\nu_i\ge 0, \nu_iy_i=0, \forall i\in[m],
\end{eqnarray}
\begin{lemma}\label{inverse}
If the strict complementarity assumption holds for \eqref{minmax2} , there exists $\delta>0$, such that if
$$\|z\|\le R(x^0, y^0, z^0),$$
and
$$\max\{\|x-x_+(y, z)\|, \|y-y_+(z)\|, \|x_+(y, z)-z\|\}<\delta$$
we have
$$\mathrm{dist}(y_+(z), y(z))< \lambda\|x^*(z)-x(y_+(z), z)\|$$ for some constant $\lambda>0$.
\end{lemma}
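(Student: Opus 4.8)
The plan is to turn the structural facts already assembled (matching active sets, a uniform lower bound on the conditioning of the constraint matrix, and the stationarity conditions) into a single linear-algebraic perturbation estimate. Throughout I write $\hat{x}=x(y_+(z),z)$, $x^*=x^*(z)$, $\hat{y}=y_+(z)$, and I let $\bar{y}=y(z)\in Y(z)$ be the maximizer furnished by Lemma~\ref{same active set}, so that $\mathcal{A}[\hat{y}]=\mathcal{A}[\bar{y}]$. Set $\mathcal{T}=\mathcal{T}(x^*)$. Since $\bar{y}$ maximizes $\langle F(x^*),\cdot\rangle$ over the simplex we have $\mathcal{I}[\bar{y}]\subseteq\mathcal{T}$, and by Lemma~\ref{same active set} the vector $\hat{y}$ has the same support, so both $\hat{y}$ and $\bar{y}$ are supported in $\mathcal{T}$; consequently $\|\hat{y}-\bar{y}\|=\|(\hat{y}-\bar{y})_{\mathcal{T}}\|$, which is the key reduction that lets me work entirely on the index block $\mathcal{T}$.

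First I would record the two stationarity systems restricted to $\mathcal{T}$. Because $\hat{x}$ is by definition the exact minimizer of $K(\cdot,z;\hat{y})$ (so I can use the clean $x$-stationarity $\sum_{i\in\mathcal{T}}\hat{y}_i\nabla f_i(\hat{x})+p(\hat{x}-z)=0$ and avoid the projection/$\tfrac1\alpha$ terms of \eqref{KKT1} altogether), and since $x^*=x(\bar{y},z)$ by Lemma~\ref{eqvl}, these together with $\mathbf{1}^\top\hat{y}=\mathbf{1}^\top\bar{y}=1$ read $M(\hat{x})^\top\hat{y}_{\mathcal{T}}=\big(-p(\hat{x}-z)^\top,\,1\big)^\top$ and $M(x^*)^\top\bar{y}_{\mathcal{T}}=\big(-p(x^*-z)^\top,\,1\big)^\top$, where $M(\cdot)=[\,J_{\mathcal{T}}F(\cdot)\ \ \mathbf{1}\,]$ as in Lemma~\ref{uniqueness}.

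Next I would subtract, writing $M(\hat{x})^\top\hat{y}_{\mathcal{T}}=M(x^*)^\top\hat{y}_{\mathcal{T}}+\big(M(\hat{x})-M(x^*)\big)^\top\hat{y}_{\mathcal{T}}$, to get $M(x^*)^\top(\hat{y}-\bar{y})_{\mathcal{T}}=\big(-p(\hat{x}-x^*)^\top,\,0\big)^\top-\big(M(\hat{x})-M(x^*)\big)^\top\hat{y}_{\mathcal{T}}$. The constant column $\mathbf{1}$ cancels in $M(\hat{x})-M(x^*)$, so the perturbation reduces to $J_{\mathcal{T}}F(\hat{x})-J_{\mathcal{T}}F(x^*)$, controlled by $\|\hat{x}-x^*\|$ using that each $\nabla f_i$ is $L$-Lipschitz (Assumption~\ref{basic-ass}, via $y=e_i$) together with $\|\hat{y}_{\mathcal{T}}\|\le 1$. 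Hence the right-hand side has norm at most $(p+L_J)\|\hat{x}-x^*\|$ for a constant $L_J$ depending only on $L$ and $|\mathcal{T}|$. Then Lemma~\ref{nonsingular} supplies $\gamma(M(x^*))\ge\gamma>0$ in the prescribed neighborhood, i.e. $\|M(x^*)^\top v\|\ge\gamma\|v\|$ for all $v$, giving $\|(\hat{y}-\bar{y})_{\mathcal{T}}\|\le\gamma^{-1}(p+L_J)\|\hat{x}-x^*\|$; combined with $\|\hat{y}-\bar{y}\|=\|(\hat{y}-\bar{y})_{\mathcal{T}}\|$ and $\mathrm{dist}(\hat{y},Y(z))\le\|\hat{y}-\bar{y}\|$ this is the claim with $\lambda=\gamma^{-1}(p+L_J)$.

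The inversion itself is routine; the substance of the argument is the bookkeeping that makes it legitimate. The crux is choosing the single index set $\mathcal{T}=\mathcal{T}(x^*)$ in which both dual vectors are simultaneously supported — precisely what Lemma~\ref{same active set} buys — and then routing around the fact that the two constraint matrices are evaluated at different points $\hat{x}$ and $x^*$, since Lemma~\ref{nonsingular} only controls the conditioning at $x^*$; this is handled by the Lipschitz-Jacobian perturbation term above. I expect the only point demanding care is confirming that $\hat{y}$ and $\bar{y}$ share support \emph{and} are supported within $\mathcal{T}(x^*)$ (so that the block restriction loses nothing), which rests on strict complementarity as already exploited in Lemmas~\ref{same active set} and \ref{nonsingular}.
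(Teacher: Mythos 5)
Your proposal is correct and follows essentially the same route as the paper's proof: reduce to the common support $\mathcal{T}$ via Lemma~\ref{same active set}, subtract the two stationarity systems $M^{T}(x(y_+(z),z))(y_+(z))_{\mathcal{T}}+\bigl(p(x(y_+(z),z)-z)^{T},0\bigr)^{T}=(0,\dots,0,1)^{T}$ and $M^{T}(x^*(z))(y(z))_{\mathcal{T}}+\bigl(p(x^*(z)-z)^{T},0\bigr)^{T}=(0,\dots,0,1)^{T}$, bound the Jacobian-difference term by Lipschitz continuity of the $\nabla f_i$, and invert using the uniform singular-value bound $\gamma(M(x^*(z)))\ge\gamma$ from Lemma~\ref{nonsingular}, arriving at the same constant (your $\gamma^{-1}(p+L_J)$ matches the paper's $(p+\sqrt{m}L)/\gamma$). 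Your added remarks---that the clean $x$-stationarity avoids the $1/\alpha$ terms of \eqref{KKT1}, and that the $\mathbf{1}$ column cancels in the perturbation---are accurate clarifications of steps the paper leaves implicit.
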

\begin{proof}
By Lemma \ref{same active set}
, if the strict complementarity assumption holds for \eqref{minmax2} , there exists $\delta>0$, such that if
$$\|z\|\le R(x^0, y^0, z^0),$$
and
$$\max\{\|x-x_+(y, z)\|, \|y-y_+(z)\|, \|x_+(y, z)-z\|\}<\delta,$$
we have
$$\mathcal{A}[y_+(z)]=\mathcal{A}[y(z)],$$
for some $y(z)\in Y(z)$.
Hence, we have
$$\mathcal{T}(x^*(z))=\mathcal{T}(x(y_+(z), z)).$$
Let $\mathcal{T}=\mathcal{T}(x^*(z))$. Then for $i\notin \mathcal{T}$, $y_i(z)=(y_+(z))_i=0$ and $\|y(z)-y_+(z)\|=\|(y(z))_{\mathcal{T}}-(y_+(z))_{\mathcal{T}}\|$.
Using the optimality conditions for $x(y_+(z), z)$ \eqref{KKT1} and $x^*(z)$ \eqref{KKTforproximal}, we have
\begin{equation}\label{linear-equation}
M^T(x(y_+(z), z))(y_+(z))_{\mathcal{T}}+\begin{Bmatrix}
p(x(y_+(z), z)-z)\\
0
\end{Bmatrix}=(0, 0, \cdots, 0, 1),
\end{equation}
and
\begin{equation}\label{linear-equation2}
M^T(x^*(z))(y(z))_{\mathcal{T}}+\begin{Bmatrix}
p(x^*(z)-z)\\
0
\end{Bmatrix}=(0, 0, \cdots, 0, 1).
\end{equation}
Note that \eqref{linear-equation} can be written as
\begin{equation}\label{linear-equation3}
M^T(x^*(z))(y_+(z))_{\mathcal{T}}=M^T(x^*(z))(y_+(z))_{\mathcal{T}}-M^T(x(y_+(z), z))(y_+(z))_{\mathcal{T}}-\begin{Bmatrix}
p(x(y_+(z), z)-z)\\
0
\end{Bmatrix}.
\end{equation}
By \eqref{linear-equation2} and \eqref{linear-equation3}, we have
$$M^T(x^*(z))((y(z))_{\mathcal{T}}-(y_+(z))_{\mathcal{T}})=(M^T(x(y_+(z), z))-M^T(x^*(z)))(y_+(z))_{\mathcal{T}}-\begin{Bmatrix}
p(x(y_+(z), z)-x^*(z))\\
0
\end{Bmatrix}.$$
Therefore, taking norms to the above and the Lemma \ref{nonsingular}, we have
\begin{eqnarray*}
\gamma\|(y_+(z))_{\mathcal{T}}-(y(z))_{\mathcal{T}}\|&\le&\sqrt{m}L\|x(y_+(z), z)-x^*(z)\|\|(y_+(z))_{\mathcal{T}}\|+p\|x(y_+(z), z)-x^*(z)\|\\
&\le&(\sqrt{m}L+p)\|x^*(z)-x(y_+(z), z)\|,
\end{eqnarray*}
where the first inequality uses the Lipschitz-continuity of $\nabla_xf_i$ and the second is because $\|y_+(z)\|\le 1$.
Hence, we finish the proof with $\lambda=(p+\sqrt{m}L)/\gamma$.
\end{proof}
\begin{proof}[Proof of Lemma \ref{dual error bound-appe}]
By Lemma \ref{weak error bound-appe} and Lemma \ref{inverse}, we have
$$\|x(y_+(z), z)-x^*(z)\|\le \frac{1+\alpha L}{\lambda(p-L)}\|y-y_+(z)\|,$$
which finishes the proof with $\sigma_5=\frac{1+\alpha L}{\lambda(p-L)}$.
\end{proof}
\section{Discussion of the strict complementarity condition}\label{Appendix: sc}
In this section, we discuss some issues about the strict complementarity assumption. First, notice that the min-max problem \eqref{minimax1} and \eqref{minmax2} are both variational inequalities.
As mentioned in the main text of the paper, the strict complementarity assumption is common in the field of variation inequality \cite{harker1990finite, facchinei2007finite}.
 While this assumption is popular, it is still interesting to weaken
 the assumption.
 Inspired by Lemma \ref{uniqueness}, we prove Theorem \ref{discrete} and Lemma \ref{dual error bound} using a weaker regularity assumption rather than the strict complementarity assumption:
\begin{assumption}\label{regularity}
For any $(x^*, y^*)\in W^*$, the matrix $M(x^*)$ is of full column rank.
\end{assumption}
Here recall that
$$M(x^*)=\begin{Bmatrix}
J_{\mathcal{T}(x^*)}&\mathbf{1}
\end{Bmatrix}.$$
We say that Assumption \ref{regularity} is weaker since the strict complementarity assumption (Assumption \ref{sc-appe}) can imply Assumption \ref{regularity} according to Lemma \ref{uniqueness}.
For this assumption, we have the following two claims:
\begin{enumerate}
\item If we replace Assumption \ref{sc-appe} by Assumption \ref{regularity} in Theorem \ref{discrete}, we can attain a same result;
\item In a robust regression problem (will define in \ref{Appendix: rationality of Assumption}), if the data is joint from a continuous distribution, this regularity assumption holds with probability $1$.
\end{enumerate}

\subsection{Replacing {Assumption \ref{sc-appe} by Assumption \ref{regularity} in Theorem \ref{discrete}}}
In this section, we will see that we can prove the dual error bound (Lemma \ref{dual error bound}) using Assumption \ref{regularity} instead of Assumption \ref{sc-appe}.
\begin{lemma}\label{dual2}
Let $$x_+(y, z)=P_X(x-\nabla_xK(x, z; y)).$$
If Assumption \ref{regularity} and the bounded level set assumption hold for \eqref{minmax2} , there exists $\delta>0$, such that if
$$\|z\|\le R(x^0, y^0, z^0),$$
and
$$\max\{\|x-x_+(y, z)\|, \|y-y_+(z)\|, \|x_+(y, z)-z\|\}<\delta$$
we have
$$\|x(y_+(z), z)-x^*(z)\|< \sigma_5\|y-y_+(z)\|$$
for  some constant $\sigma_5>0$.
\end{lemma}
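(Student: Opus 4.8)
The plan is to re-run the proof of Lemma~\ref{dual error bound-appe} at the level of its architecture, replacing every appeal to strict complementarity (Assumption~\ref{sc-appe}) by an appeal to the rank condition on $M(x^*)$ furnished by Assumption~\ref{regularity}. The endgame is unchanged: once the homogeneous ``inverse'' estimate $\mathrm{dist}(y_+(z), Y(z)) \le \lambda\,\|x^*(z) - x(y_+(z), z)\|$ (the analogue of Lemma~\ref{inverse}) is in hand, I would combine it with the nonhomogeneous bound of Lemma~\ref{weak error bound-appe} and divide through by $\|x^*(z) - x(y_+(z), z)\|$, reading off $\|x(y_+(z), z) - x^*(z)\| \le \sigma_5\,\|y - y_+(z)\|$ with $\sigma_5 = (1+\alpha L)/(\lambda(p-L))$ exactly as before. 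So the whole task reduces to re-establishing Lemmas~\ref{uniqueness}, \ref{nonsingular}, \ref{same active set} and \ref{inverse} under Assumption~\ref{regularity}.

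First I would observe that Lemma~\ref{uniqueness} is no longer needed as a separate step: the rank of $M$ at a solution, which there was \emph{extracted} from strict complementarity, is now precisely the hypothesis of Assumption~\ref{regularity}, which is why the latter is the weaker condition. Consequently Corollary~\ref{uniqueness1} continues to apply. Next, the uniform singular-value bound $\gamma(M(x^*(z))), \gamma(M(x(y,z))) \ge \gamma > 0$ of Lemma~\ref{nonsingular} would be reproved by the same compactness-and-continuity contradiction: a sequence $z^k$ with vanishing residuals and $\gamma(M(x^*(z^k))) \to 0$ yields, via Lemma~\ref{limit}, a limit $\bar z \in X^*$; along a subsequence the top-coordinate set may be taken constant, and the continuity of $x^*(\cdot)$ from \eqref{eb11} together with $x^*(\bar z) = \bar z$ (Lemma~\ref{eqvl}) identifies the limiting matrix with a row-submatrix of $M(\bar z)$. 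The rank hypothesis of Assumption~\ref{regularity} at $\bar z$ then forces the limiting matrix to retain its rank, contradicting the degeneracy. The only point requiring care is the inclusion between the top-coordinate set along the sequence and at the limit, handled as before by passing the strict inequalities $f_i > f_j$ to the limit.

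The genuinely new work---and the main obstacle---is the analogue of Lemma~\ref{same active set}. In the strict-complementarity proof the strict positivity $\bar\nu_i > 0$ on the active set was exactly what forced the supports of $y_+(z)$ and of the comparison optimal dual to coincide for small residuals, and thereby delivered the identity $\mathcal{T}(x^*(z)) = \mathcal{T}(x(y_+(z), z))$ used in Lemma~\ref{inverse}; this is the ingredient we are giving up. Without it, a coordinate may be simultaneously dual-inactive and have a vanishing multiplier, so the active set need not stabilize, and if one reads Assumption~\ref{regularity} as permitting a non-unique optimal dual then $Y(z)$ may be a genuine polytope. The route I would pursue is to reason directly through the top-coordinate set $\mathcal{T}(x^*(z))$ rather than through $\mathcal{A}[y]$: away from ties between top and non-top function values the strict gaps persist, so it suffices to show $\mathcal{T}(x^*(z)) = \mathcal{T}(x(y_+(z), z))$ from the closeness of the two primal points and the near-KKT systems \eqref{linear-equation}, \eqref{linear-equation2}, with full rank of $M(x^*(z))$ ruling out the spurious tie-breaking that strict complementarity previously excluded. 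Should $Y(z)$ fail to be a singleton, I would instead take $y(z)$ to be the projection of $y_+(z)$ onto $Y(z)$, split the dual difference into its component in $\ker M^T(x^*(z))$ (absorbed by moving within $Y(z)$) and its orthogonal component (bounded by $\gamma(M(x^*(z)))^{-1}$ times the primal difference via Lemma~\ref{nonsingular}), and recover the inverse estimate.

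Finally I would assemble the pieces into Lemma~\ref{dual2} exactly as Lemma~\ref{dual error bound-appe} was assembled from Lemmas~\ref{weak error bound-appe} and \ref{inverse}. The delicate point throughout is the third paragraph: making the matching of top-coordinate sets (or, in the non-unique case, the selection of $y(z)$ and the $\ker M^T$/orthogonal splitting, together with the verification that the relevant coordinates stay nonnegative so the kernel component can be cancelled inside $Y(z)$) rigorous using only the rank condition is where the argument must work noticeably harder than in the strict-complementarity case.
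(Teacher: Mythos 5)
Your overall architecture agrees with the paper's up to the crucial step: the final assembly (combine a homogeneous inverse estimate $\mathrm{dist}(y_+(z),Y(z))\le\lambda\|x^*(z)-x(y_+(z),z)\|$ with Lemma \ref{weak error bound-appe} and divide through) and the compactness-and-continuity re-proof of a uniform singular-value floor are exactly Lemmas \ref{inverse2} and \ref{nonsingular2}. But at the step you yourself flag as the main obstacle, your plan has a genuine gap. Your primary route is to prove the top-set equality $\mathcal{T}(x^*(z))=\mathcal{T}(x(y_+(z),z))$, i.e.\ to resurrect an analogue of Lemma \ref{same active set}. Under Assumption \ref{regularity} alone this equality is precisely what one cannot expect: the limit argument (Lemma \ref{limit}, continuity \eqref{eb11}, and $x^*(\bar z)=\bar z$ from Lemma \ref{eqvl}) yields only the \emph{containments} $\mathcal{T}(x^*(z))\subseteq\mathcal{T}(\bar z)$ and $\mathcal{T}(x(y_+(z),z))\subseteq\mathcal{T}(\bar z)$. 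Ties among the coordinates of $F$ may break differently at the two nearby points (one point can have a strict top index where the other still carries a tie inside $\mathcal{T}(\bar z)$), and a rank condition on gradients controls no function-value ties; ``full rank ruling out spurious tie-breaking'' is an assertion, not a mechanism. Strict complementarity was exactly the ingredient that pinned the supports down in Lemma \ref{same active set}, and it is gone.

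The idea missing from your proposal---and the paper's actual fix---is to give up on identifying either top set and work instead on the \emph{union} $\mathcal{T}(y,z)=\mathcal{T}(x^*(z))\cup\mathcal{T}(x(y_+(z),z))$. Lemma \ref{nonsingular2} establishes the uniform bounds $\gamma(M_{\mathcal{T}(y,z)}(x^*(z)))\ge\gamma$ and $\gamma(M_{\mathcal{T}(y,z)}(x(y_+(z),z)))\ge\gamma$ by your same contradiction argument, with one extra twist you do not mention: along the offending sequence it invokes Lemma \ref{weak error bound-appe} (using $\|y^k-y^k_+(z^k)\|\to 0$) to force $\|x(y^k_+(z^k),z^k)-x^*(z^k)\|\to 0$, so that \emph{both} top sets, hence their union, eventually sit inside $\mathcal{T}(\bar z)$, where Assumption \ref{regularity} applies to the corresponding row-submatrix. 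Lemma \ref{inverse2} then repeats the subtraction of the two optimality systems restricted to this common index set verbatim; only containments are needed, since both restricted linear systems live on one set with one singular-value floor. This also renders your fallback route (projecting $y_+(z)$ onto $Y(z)$ and splitting the dual difference along $\ker M^T(x^*(z))$) superfluous: on the relevant index set the rank condition makes $M^T_{\mathcal{T}}$ injective, so the kernel component you propose to ``absorb within $Y(z)$'' is trivial, and in the hypothetical nontrivial case your sketch leaves exactly the hard point (feasibility, i.e.\ nonnegativity, of the shifted dual point) unresolved, as you concede. In short: your first two paragraphs track the paper; your third aims at a statement that is both stronger than needed and unprovable from Assumption \ref{regularity}, while the one-line device that actually closes the proof---the union of the two top-coordinate sets---is absent.
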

Using this Lemma, we can prove Theorem \ref{discrete} using Assumption \ref{regularity}:
\begin{theorem}\label{discrete2}
Consider solving problem \ref{minmax2} by Algorithm \ref{Alg2} or Algorithm \ref{Alg3}. Suppose that Assumption \ref{regularity} holds and either Assumption \ref{boundedness} holds or assume $\{z^t
\}$ is bounded. Then there exist constants $\beta'$ and $\beta''$ (independent of $\epsilon$ and $T$) such that the following holds:
\begin{enumerate}
    \item (One-block case) If we choose the parameters in Algorithm \ref{Alg2} as in \eqref{eqn: parameters for Alg 2 general case} and further let
$\beta<\beta'$ ,    then
        \begin{enumerate}
        \item Every limit point of $(x^t, y^t)$ is a solution of \eqref{minmax2}.
        \item The iteration complexity of Algorithm   \ref{Alg2} to obtain an $\epsilon$-stationary solution is $\mathcal{O}(1/\epsilon^2)$.
        \end{enumerate}
    \item (Multi-block case) Consider using Algorithm \ref{Alg3} to solve Problem \ref{minmax2}. If we replace the condition for $\alpha$ in \eqref{eqn: parameters for Alg 2 general case} by \eqref{condition}
and require $\beta$ satisfying  $\beta<\epsilon^2$ and $\beta<\beta''$,
    then we have the same results as in the one-block case.
\end{enumerate}
\end{theorem}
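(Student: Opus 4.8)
The plan is to reduce Theorem~\ref{discrete2} to a single new ingredient, namely the dual error bound Lemma~\ref{dual2}, and then re-use the entire machinery built for Theorem~\ref{discrete} verbatim. Inspecting that proof, the strict complementarity assumption (Assumption~\ref{sc-appe}) enters \emph{only} through Lemma~\ref{dual error bound-appe}: the basic descent estimate (Proposition~\ref{basic-estimate-appe}), the two-case split \eqref{case1-appe2}--\eqref{case2-appe2}, the sufficient-decrease inequality \eqref{suff-decrease-appe}, and the final complexity and limit-point count in Lemma~\ref{trivial-appe} are all assumption-agnostic. Hence, once Lemma~\ref{dual2} is in hand, I would copy the proof of the sufficient-decrease proposition word for word (with Lemma~\ref{dual error bound-appe} replaced by Lemma~\ref{dual2}), obtaining \eqref{suff-decrease-appe} for every $t$, and then invoke Lemma~\ref{trivial-appe} to conclude the $\mathcal{O}(1/\epsilon^2)$ complexity and the convergence of every limit point to a solution of \eqref{minmax2}. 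The multi-block case follows identically, using Proposition~\ref{basic2} and the parameter choice \eqref{condition} in place of Proposition~\ref{basic-estimate-appe}.

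So the real work is to establish Lemma~\ref{dual2}, and I would follow the skeleton of the proof of Lemma~\ref{dual error bound-appe}. The first observation is that Assumption~\ref{regularity} is precisely the full-rank conclusion that Lemma~\ref{uniqueness} extracted from strict complementarity; consequently Corollary~\ref{uniqueness1} (uniqueness of the dual multipliers) and the uniform least-singular-value bound of Lemma~\ref{nonsingular} go through unchanged, since their proofs use only that $M(x^*)$ has full rank at every solution together with the continuity of $x^*(\cdot)$ (via \eqref{eb11}) and of the smallest singular value. Thus the analogs of Lemma~\ref{limit}, Lemma~\ref{nonsingular}, and the linear-system manipulation of Lemma~\ref{inverse} can all be reproduced, provided the index sets are handled correctly.

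The main obstacle is the analog of Lemma~\ref{same active set}, the one step whose proof genuinely used strict complementarity (to force $\bar\nu_i>0$ on the active set of the limiting dual variable and thereby stabilize the active sets of $y_+(z)$ and $y(z)$). Without it, the supports of $y_+(z)$ and of $y(z)\in Y(z)$ need not coincide, and a top index $i\in\mathcal{T}(x)$ may be degenerate with $y_i=0$. The resolution I would pursue is to stop tracking the active sets of the dual variables and instead work at the level of the top-coordinate set of the limiting solution. Along any sequence on which the residuals tend to $0$, Lemma~\ref{limit} gives $z\to\bar z\in X^*$ with $x^*(\bar z)=\bar z$, and both $x^*(z)$ and $x(y_+(z),z)$ converge to $\bar z$. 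Since the $f_i$ are continuous, the top set is upper semicontinuous, so for $z$ near $\bar z$ one has $\mathcal{T}(x^*(z)),\ \mathcal{T}(x(y_+(z),z))\subseteq \mathcal{T}(\bar z)$, while the KKT relation $\mathcal{I}[y]\subseteq\mathcal{T}(x)$ forces both $y(z)$ and $y_+(z)$ to be supported inside $\mathcal{T}(\bar z)$. I would therefore phrase the whole linear-system argument over the fixed index set $\mathcal{T}=\mathcal{T}(\bar z)$, zero-padding the dual iterates outside their supports; Assumption~\ref{regularity} guarantees that $M(\bar z)$ restricted to $\mathcal{T}$ has full column rank, and by continuity this rank, hence a uniform lower bound $\gamma$ on its least singular value, persists for $z$ near $\bar z$. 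Subtracting the perturbed KKT system for $x(y_+(z),z)$ from that for $x^*(z)$, exactly as in \eqref{linear-equation}--\eqref{linear-equation3}, and taking norms then yields $\mathrm{dist}(y_+(z),Y(z))\le \lambda\,\|x^*(z)-x(y_+(z),z)\|$ for some $\lambda>0$.

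Finally I would close the loop as in the strict-complementarity case: combining this inequality with the (assumption-free) weak error bound Lemma~\ref{weak error bound-appe} gives $\|x(y_+(z),z)-x^*(z)\|\le \sigma_5\|y-y_+(z)\|$ with $\sigma_5=(1+\alpha L)/(\lambda(p-L))$, which is exactly Lemma~\ref{dual2}. I expect the delicate point to be the degeneracy just described, that is, making the top-coordinate bookkeeping rigorous when $\mathcal{T}(x^*(z))$ and $\mathcal{T}(x(y_+(z),z))$ are proper and possibly distinct subsets of $\mathcal{T}(\bar z)$, and verifying that the zero-padding does not spoil the full-column-rank estimate. Once that is settled, the passage from Lemma~\ref{dual2} to Theorem~\ref{discrete2} is mechanical.
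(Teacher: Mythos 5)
Your proposal is correct and takes essentially the same route as the paper: the paper likewise reduces Theorem~\ref{discrete2} to the dual error bound Lemma~\ref{dual2} and proves that lemma by adapting Lemma~\ref{nonsingular} and the KKT linear-system argument of Lemma~\ref{inverse} (its Lemmas~\ref{nonsingular2} and~\ref{inverse2}), replacing the active-set matching of Lemma~\ref{same active set} with containment of the relevant top-coordinate sets in $\mathcal{T}(\bar z)$ near a limit solution, exactly as you describe, and then closing with the weak error bound Lemma~\ref{weak error bound-appe}. Your only immaterial deviations are doing the bookkeeping over $\mathcal{T}(\bar z)$ directly rather than over the paper's union set $\mathcal{T}(x^*(z))\cup\mathcal{T}(x(y_+(z),z))$ (with a compactness contradiction giving the uniform singular-value bound in place of your local-continuity phrasing), and reproducing the paper's own slightly garbled constant $\sigma_5$.
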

\subsection{The rationality of Assumption \ref{regularity}}\label{Appendix: rationality of Assumption}
Intuitively, the assumption \ref{regularity} holds for ``generic problem''.
We rigorously justify this intuition for a simple problem. 
More specifically, we prove that this regularity assumption is generic for a robust regression problem using square loss, i.e., the regularity condition holds with probability $1$ if the outputs of the data points are joint from some continuous distribution.
Consider the following problem:
\begin{equation}\label{regression}
\min_{x\in \mathbb{R}^n}\max_{y\in Y} \frac{1}{2} y_i(\ell_i-\Psi(x, \xi_i))^2,
\end{equation}
where $Y$ is the probability simplex , $\Psi(\cdot)$ is a smooth function used to fit the data (for example the neural network) and $\xi_i, \ell_i$ are the input and the output of the $i$-th data point. We define $\Psi_i(x)=\Psi(x, \xi_i)$ for convenience.
We further make the following mild assumptions:
\begin{assumption}\label{necessary}
$\ell_i$ is joint independently from a continuous distribution over a positive measure set $\mathcal{L}_i\subseteq \mathbb{R}$.
\end{assumption}
Here a continuous distribution over $\mathcal{L}_i$ means that for any zero measure set
$\mathcal{S}\subseteq \mathcal{R}$,
$\mathrm{Pr}(x\in \mathcal{S}\cap \mathcal{L}_i)=0$.
With assumption, for any zero measure set $\mathcal{S}\subseteq \mathbb{R}^m$, $\mathrm{Pr}((\ell_1, \cdots, \ell_m)^T\in \mathcal{S}\cap\prod_{i=1}^m\mathcal{L}_i)=0$.
\begin{assumption}\label{mild}
Let $\Psi(x)=(\Psi_1(x), \cdots, \Psi_m(x))^T$.
Then $\Psi(\mathbb{R}^n)\cap \prod_{i=1}^m\mathcal{L}_i=\Omega$, where $\Omega$ is a zero measure set in $\prod_{i=1}^m\mathcal{L}_i$.
\end{assumption}

This assumption means that $\min_{x}\max_{y\in Y}f_i(x)>0$ with probability $1$. This assumption is reasonable. If there exists an $x^*$ such that $\max_{i}f_i(x^*)=0$, then becaus $f_i(x)\ge 0$, we have $f_i(x^*)=0$ for all $i$.  In this case, we do not need the min-max fomulation! We just need to solve the finite sum problem $\min_{x}\sum_{i=1}^mf_i(x)$. However, in many cases, the uncertainty is large, we do need the robust optimization formulation. So in these cases, Assumption \ref{mild} is reasonable.

Moreover, we have the following lemma:
\begin{lemma}\label{mild1}
Suppose that Assumption \ref{necessary} holds. If $m>n$, Assumption \ref{mild} holds with probability $1$.
\end{lemma}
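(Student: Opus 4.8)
The plan is to reduce Assumption~\ref{mild} to the purely geometric fact that the image $\Psi(\mathbb{R}^n)$ is a Lebesgue-null subset of $\mathbb{R}^m$, and then to exploit that $\Psi$ is smooth together with the strict inequality $m>n$. Since Assumption~\ref{mild} asks that $\Omega=\Psi(\mathbb{R}^n)\cap\prod_{i=1}^m\mathcal{L}_i$ be a zero-measure set and $\Omega\subseteq\Psi(\mathbb{R}^n)$, it suffices to prove that $\Psi(\mathbb{R}^n)$ has Lebesgue measure zero in $\mathbb{R}^m$. (This is exactly what rules out the degenerate situation $\min_x\max_i f_i(x)=0$: by the square-loss structure and $f_i\ge 0$, that equality would force $\Psi(x^*)=(\ell_1,\dots,\ell_m)$ at a minimizer $x^*$, i.e.\ $(\ell_1,\dots,\ell_m)\in\Psi(\mathbb{R}^n)$.)

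The crucial hypothesis is smoothness, which cannot be dropped, since a merely continuous map can be space-filling; I would use instead that a $C^1$ map is locally Lipschitz. Writing $\mathbb{R}^n=\bigcup_j Q_j$ as a countable union of closed unit cubes, on each compact $Q_j$ the map $\Psi$ is Lipschitz with some finite constant $C_j$ (finite by compactness of $Q_j$ and continuity of $\nabla\Psi$). I then partition $Q_j$ into $k^n$ subcubes of side $1/k$; each subcube has diameter $\sqrt{n}/k$, so its image lies in a Euclidean ball of radius $C_j\sqrt{n}/k$, whose $m$-dimensional volume is at most $c_m\,(C_j\sqrt{n}/k)^m$ for the unit-ball constant $c_m$. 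Summing over the $k^n$ subcubes yields the outer-measure bound $c_m\,(C_j\sqrt{n})^m\,k^{\,n-m}$ for $\Psi(Q_j)$.

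This is where $m>n$ enters decisively: the exponent $n-m$ is strictly negative, so letting $k\to\infty$ drives the bound to $0$, proving each $\Psi(Q_j)$ is null. A countable union of null sets is null, hence $\Psi(\mathbb{R}^n)=\bigcup_j\Psi(Q_j)$ is Lebesgue-null in $\mathbb{R}^m$; in particular $\Omega\subseteq\Psi(\mathbb{R}^n)$ is null, which is precisely Assumption~\ref{mild}. Finally, because by Assumption~\ref{necessary} the vector $(\ell_1,\dots,\ell_m)$ is drawn from a continuous distribution on $\prod_i\mathcal{L}_i$, every null set receives probability zero, so $\mathrm{Pr}\big((\ell_1,\dots,\ell_m)\in\Omega\big)=0$, giving the ``probability $1$'' conclusion.

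I expect the main obstacle to be the covering estimate of the second paragraph: obtaining the clean bound $c_m(C_j\sqrt{n})^m k^{\,n-m}$ and, in particular, verifying that the local Lipschitz constants $C_j$ remain finite under the exhaustion of the noncompact domain $\mathbb{R}^n$ (this is why the cube-by-cube argument, rather than a single global Lipschitz constant, is needed). Once this estimate is in place, the reduction in the first paragraph and the measure-to-probability step at the end are routine.
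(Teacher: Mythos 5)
Your proposal is correct and follows essentially the same route as the paper: the paper's proof simply cites the fact that a smooth map carries null sets to null sets (viewing $\mathbb{R}^n$ as a Lebesgue-null subset of $\mathbb{R}^m$ because $m>n$), and your cube-by-cube Lipschitz covering estimate with the bound $c_m(C_j\sqrt{n})^m k^{\,n-m}\to 0$ is precisely the standard proof of that cited fact, so you have merely expanded the paper's one-line argument into a self-contained one. One cosmetic remark: your final probabilistic step is superfluous --- once $\Psi(\mathbb{R}^n)$, and hence $\Omega\subseteq\Psi(\mathbb{R}^n)$, is shown to be null, Assumption \ref{mild} holds deterministically (note $\Psi$ does not depend on the random outputs $\ell_i$, so Assumption \ref{necessary} and the continuity of the distribution play no role here), which is exactly how the paper concludes as well.
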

\begin{proof}
It is direct from the claim that a smooth map $\Psi$ maps a zero measure set into a zero measure set.
Specializing to this lemma, the map $\Psi$ maps $\mathbb{R}^n$ into $\mathbb{R}^m$, hence the image $\Psi(\mathbb{R}^n)$ is of zero measure since $\mathbb{R}^n$ is a zero measure set of $\mathbb{R}^m$. Therefore, $\Psi(\mathbb{R}^n)\cap \prod_{i=1}^m\mathcal{L}_i$ is zero measure in $\mathbb{R}^m$.
\end{proof}
Then we have the following result:
\begin{proposition}\label{generic}
Suppose that Assumption \ref{necessary} and Assumption \ref{mild} hold. Then with probability $1$, every solution of \eqref{regression} satisfies Assumption \ref{regularity}.
\end{proposition}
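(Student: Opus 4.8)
The plan is to prove this as a genericity (transversality) statement. I will show that the set of label vectors $\ell=(\ell_1,\dots,\ell_m)$ for which some solution $(x^*,y^*)\in W^*$ has a rank-deficient $M(x^*)$ is contained in a Lebesgue-null subset of $\prod_{i}\mathcal{L}_i$; by Assumption \ref{necessary} this event then has probability zero. Since the top-coordinate set $\mathcal{T}(x^*)$ takes only finitely many values $\mathcal{T}\subseteq[m]$, it suffices to fix $\mathcal{T}$ (and a sign pattern, introduced below) and show the corresponding ``bad'' label set is null; a finite union of null sets is null.

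First I would record the explicit form of the data. For the square loss $f_i(x)=\tfrac12(\ell_i-\Psi_i(x))^2$ one has $\nabla f_i(x)=(\Psi_i(x)-\ell_i)\nabla\Psi_i(x)$. By Assumption \ref{mild} the optimal value is positive, so at any solution the active residuals $r_i:=\ell_i-\Psi_i(x^*)$ satisfy $r_i^2=2\theta>0$ for $i\in\mathcal{T}(x^*)$; in particular $r_i\ne0$ and $r_i=\sigma_i\sqrt{2\theta}$ for some sign pattern $\sigma\in\{\pm1\}^{\mathcal{T}}$ (finitely many). Fixing $\mathcal{T}$ and $\sigma$, I would assemble a single smooth map $G$ in the variables $(x,y,w,\theta,\ell)$ whose vanishing encodes simultaneously: the tied-value equations $f_i(x)=\theta$ for $i\in\mathcal{T}$; stationarity $\sum_{i\in\mathcal{T}}y_i\nabla f_i(x)=0$ with $\sum_{i\in\mathcal{T}}y_i=1$; and a rank-deficiency witness $w=(\bar w,w_{n+1})\in\mathbb{R}^{n+1}$, $\|w\|=1$, with $\langle\nabla f_i(x),\bar w\rangle+w_{n+1}=0$ for all $i\in\mathcal{T}$. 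Counting variables minus equations, the preimage $G^{-1}(0)$ has expected dimension $m+n-k$ where $k=|\mathcal{T}|$, so its projection to label space is Lebesgue-null as soon as $m+n-k<m$, i.e. $k\ge n+1$.

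It thus remains to rule out solutions with few active indices, $k=|\mathcal{T}(x^*)|\le n$, for which $M(x^*)$ (a $k\times(n+1)$ matrix) is automatically rank-deficient. Here I would use that $x^*$ is a genuine minimizer of $\psi(x)=\max_i f_i(x)$, not merely a KKT point: first-order optimality $0\in\mathrm{conv}\{\nabla f_i(x^*):i\in\mathcal{T}(x^*)\}$ together with minimality forces the active gradients to positively span $\mathbb{R}^n$ (otherwise there is a common descent direction), which requires $k\ge n+1$. Substituting $\nabla f_i(x^*)=\sigma_i\sqrt{2\theta}\,\nabla\Psi_i(x^*)$, this spanning condition becomes a statement about $\{\nabla\Psi_i(x^*)\}$ that holds off a null set of $\ell$ by the same bookkeeping used in Lemma \ref{mild1}, namely that a smooth map cannot increase the dimension of the parameter set. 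Combining the two regimes and taking the finite union over $\mathcal{T}$ and $\sigma$ yields that, with probability one, every solution has $M(x^*)$ of full column rank, which is Assumption \ref{regularity}.

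The hard part will be verifying the transversality (regular-value) hypothesis that legitimizes the dimension count, i.e. that the differential of $G$ is surjective along $G^{-1}(0)$. The mechanism is that each label $\ell_i$, $i\in\mathcal{T}$, enters the tied-value, stationarity, and witness equations independently, so perturbing the $\ell_i$ (using $r_i\ne0$ from Assumption \ref{mild}) spans most output directions, while the $w$- and $y$-directions cover the normalization and simplex equations. The delicate point is the set of indices where $\langle\nabla\Psi_i(x^*),\bar w\rangle=0$, for which the witness equation is insensitive to $\ell_i$ and one must instead exploit variations of $w$ together with genericity of $\{\nabla\Psi_i\}$; establishing surjectivity uniformly over these degenerate configurations, and making the ``minimizer forces $\ge n+1$ active gradients'' step fully rigorous, are the two ingredients requiring the most care.
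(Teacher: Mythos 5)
Your treatment of the regime $k=|\mathcal{T}(x^*)|\le n$ is where the proof breaks. First, your premise that $M(x^*)$ is ``automatically rank-deficient'' there misreads the condition: despite the phrase ``full column rank'' in Assumption \ref{regularity}, what the paper actually uses everywhere (Corollary \ref{uniqueness1}, Lemmas \ref{nonsingular} and \ref{inverse}, where the smallest singular value $\gamma(M)$ lower-bounds $\|M^T \Delta y\|$ for $\Delta y$ supported on $\mathcal{T}$) is injectivity of $M^T$, i.e.\ full \emph{row} rank of the $k\times(n+1)$ matrix $M$; for $k\le n$ this is a perfectly generic property, not an impossibility. Second, your proposed repair --- that minimality of $\max_i f_i$ forces the active gradients to positively span $\mathbb{R}^n$, hence $k\ge n+1$ --- is false. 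First-order optimality gives only $0\in\operatorname{conv}\{\nabla f_i(x^*): i\in\mathcal{T}(x^*)\}$; failure of positive spanning yields a direction $d$ with $\max_{i\in\mathcal{T}}\langle\nabla f_i(x^*),d\rangle\le 0$, which is merely a direction of first-order nonincrease, not a descent direction (second-order terms can dominate). Concretely, with $n=1$, $m=2$, a solution where $f_2$ strictly dominates, $\Psi_2'(x^*)=0$ and $\Psi_2(x^*)\ne\ell_2$ gives a legitimate minimizer with $k=1\le n$ and $M(x^*)=(0,1)$ of full row rank: there is nothing to rule out, but also nothing automatic. So your argument neither excludes $k\le n$ nor addresses the genuinely bad event there, namely $\operatorname{rank}(M)<k$ (e.g.\ two active residual-weighted gradients becoming parallel), which still needs a measure-zero argument your scheme does not provide. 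On top of this, your main engine --- the parametric system in $(x,y,w,\theta,\ell)$ with a dimension count --- rests on a regular-value hypothesis you explicitly leave unverified, and you correctly identify that verifying surjectivity of the differential over the degenerate configurations is the hard part.

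The paper's proof avoids both difficulties with one observation. Fix $\mathcal{T}$ (your $\mathcal{T}$) and a sign pattern $\delta\in\{\pm1\}^{k}$, and define the single smooth map $G(x_1,\dots,x_n,x_{n+1})=\bigl(\Psi_i(x)-\delta_i x_{n+1}\bigr)_{i\in\mathcal{T}}$ from $\mathbb{R}^{n+1}$ to $\mathbb{R}^{k}$, where $x_{n+1}$ plays the role of your $\sqrt{2\theta}$. At any bad solution one has $\ell_{\mathcal{T}}=G(x^*,x^*_{n+1})$ with $x^*_{n+1}>0$ (positivity comes from Assumption \ref{mild}, up to the null set $\Omega$, since $x^*_{n+1}=0$ forces $\ell=\Psi(x^*)$), and the Jacobian $JG$ differs from $M_{\mathcal{T}}$ only by multiplying row $i$ by $\delta_i x_{n+1}\neq 0$ and the last column by $-1/x_{n+1}$. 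Hence $M_{\mathcal{T}}$ is row-rank deficient exactly when $(x^*,x^*_{n+1})$ is a \emph{critical point} of $G$, so the bad labels are \emph{critical values} of a fixed smooth map, and Sard's theorem makes them Lebesgue-null with no transversality verification at all, uniformly in $k$: when $k>n+1$ every value is critical and the whole image is null (recovering your dimension count, in the spirit of Lemma \ref{mild1}), and when $k\le n$ the same statement covers precisely the rank-deficient event you left untreated. No dual variable $y$, witness vector $w$, or stationarity equations are needed. To salvage your route you would have to (i) drop the positive-spanning claim, (ii) run the genericity argument for every $k$ against the event $\operatorname{rank}(M)<k$ rather than only $k\ge n+1$, and (iii) actually establish the transversality --- at which point applying Sard to critical values, rather than to preimages of regular values, is exactly the shortcut the paper takes.
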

\subsection{Proof of Lemma \ref{dual2} and Theorem \ref{discrete2}}
For a set $\mathcal{S}\subseteq [m]$, we define
$$M_{\mathcal{S}}(x)=\begin{Bmatrix}
J_{\mathcal{S}F(x; \ell_{\mathcal{S}})} &\mathbf{1}
\end{Bmatrix},$$
where $J_{\mathcal{S}} F(x; \ell_{\mathcal{S}})=((\Psi_i(x)-\ell_i)\nabla_x\Psi_i(x)\mid i\in \mathcal{S})$.

Similar to the proof of Theorem \ref{discrete}, to prove Theorem \ref{discrete2}, we only to prove Lemma \ref{dual2}. Hence, in this section, we only prove Lemma \ref{dual2}.
The proof is similar to the proof of Lemma \ref{dual error bound}. Hence we only give the main steps.
First, similar to Lemma \ref{nonsingular}, we have the following:
\begin{lemma}\label{nonsingular2}
If Assumption \ref{regularity} holds for Problem \eqref{regression} , there exists $\delta>0, \gamma>0$, such that if
$$\|z\|\le R(x^0, y^0, z^0),$$
and
$$\max\{\|x-x_+(y, z)\|, \|y-y_+(z)\|, \|x_+(y, z)-z\|\}<\delta,$$
then
$\gamma(M_{\mathcal{T}(y, z)}(x^*(z)))\ge \gamma$ and $\gamma(M_{\mathcal{T}(y, z)}(x(y_+(z), z)))\ge \gamma$, where
$$\mathcal{T}(y, z)=\mathcal{T}(x^*(z))\cup \mathcal{T}(x(y_+(z), z)).$$
\end{lemma}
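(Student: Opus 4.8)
The plan is to mirror the contradiction argument already used for Lemma \ref{nonsingular}, the only genuinely new ingredient being the bookkeeping forced by the union index set $\mathcal{T}(y,z)=\mathcal{T}(x^*(z))\cup\mathcal{T}(x(y_+(z),z))$, which replaces the single top-coordinate set that was available under strict complementarity (where Lemma \ref{same active set} guaranteed the two top sets coincide). Suppose the claim fails. Then there is a sequence $\{(x^k,y^k,z^k)\}$ with $\|z^k\|\le R(x^0,y^0,z^0)$, with $\max\{\|x^k-x^k_+(y^k,z^k)\|,\|y^k-y^k_+(z^k)\|,\|x^k_+(y^k,z^k)-z^k\|\}\to 0$, and with $\gamma\big(M_{\mathcal{S}_k}(x^*(z^k))\big)\to 0$ or $\gamma\big(M_{\mathcal{S}_k}(x(y^k_+(z^k),z^k))\big)\to 0$, where $\mathcal{S}_k=\mathcal{T}(y^k,z^k)$.

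Since there are only finitely many subsets of $[m]$, I would pass to a subsequence along which $\mathcal{T}(x^*(z^k))\equiv\mathcal{T}_1$ and $\mathcal{T}(x(y^k_+(z^k),z^k))\equiv\mathcal{T}_2$ are constant, so that $\mathcal{S}_k\equiv\mathcal{S}=\mathcal{T}_1\cup\mathcal{T}_2$ is fixed. By Lemma \ref{limit} a further subsequence of $\{z^k\}$ converges to some $\bar z\in X^*$. Using \eqref{eb11} of Lemma \ref{Lp_continue} and Lemma \ref{eqvl} gives $x^*(z^k)\to x^*(\bar z)=\bar z$; using the Lipschitz continuity of $x(\cdot,\cdot)$ from Lemma \ref{Lp_continue} together with $y^k_+(z^k)\to\bar y\in Y(\bar z)$ (which follows as in Lemma \ref{limit}) and Lemma \ref{eqvl} gives $x(y^k_+(z^k),z^k)\to x(\bar y,\bar z)=\bar z$ as well. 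Hence both matrix sequences converge to the same limit $M_{\mathcal{S}}(\bar z)$, and by continuity of the least-singular-value map I conclude $\gamma(M_{\mathcal{S}}(\bar z))=0$, i.e. $M_{\mathcal{S}}(\bar z)$ is rank-deficient.

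It remains to contradict this rank-deficiency using Assumption \ref{regularity}. For this I would first show $\mathcal{S}\subseteq\mathcal{T}(\bar z)$: if $i\in\mathcal{T}_1$ then $f_i(x^*(z^k))=\max_j f_j(x^*(z^k))$ for all $k$ in the subsequence, and passing to the limit by continuity of each $f_j$ gives $f_i(\bar z)=\max_j f_j(\bar z)$, so $i\in\mathcal{T}(\bar z)$; applying the same argument to $\mathcal{T}_2$ yields $\mathcal{S}=\mathcal{T}_1\cup\mathcal{T}_2\subseteq\mathcal{T}(\bar z)$. Consequently $M_{\mathcal{S}}(\bar z)$ is a row sub-matrix of $M_{\mathcal{T}(\bar z)}(\bar z)=M(\bar z)$. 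Since $\bar z\in X^*$, Assumption \ref{regularity} guarantees that $M(\bar z)$ has full rank, and deleting rows from a matrix whose rows are linearly independent leaves them linearly independent; thus $M_{\mathcal{S}}(\bar z)$ has full rank as well, contradicting $\gamma(M_{\mathcal{S}}(\bar z))=0$. This contradiction establishes the uniform bound $\gamma>0$ (and, by the same compactness scheme, a single $\delta>0$).

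The main obstacle is the upper-semicontinuity containment $\mathcal{S}\subseteq\mathcal{T}(\bar z)$ combined with the rank-inheritance conclusion: it is exactly here that the union index set is indispensable, for once strict complementarity is dropped the two sets $\mathcal{T}(x^*(z))$ and $\mathcal{T}(x(y_+(z),z))$ need not agree, so one must carry both and verify that their common limiting support is dominated by the (possibly strictly larger) top set $\mathcal{T}(\bar z)$ at the limiting solution, which is where regularity of $M$ is assumed. A secondary but necessary technical point is to justify $\bar y\in Y(\bar z)$ and the simultaneous convergence of both $x^*(z^k)$ and $x(y^k_+(z^k),z^k)$ to the single point $\bar z$, which is precisely what lets the two singular-value statements be settled by one and the same limiting matrix $M_{\mathcal{S}}(\bar z)$.
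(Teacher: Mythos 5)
Your proof is correct and follows essentially the same contradiction-plus-compactness skeleton as the paper's: negate the claim, pass to a subsequence along which the union index set $\mathcal{S}$ is constant, use Lemma \ref{limit} to get $z^k\to\bar z\in X^*$, show both matrix sequences converge to the single matrix $M_{\mathcal{S}}(\bar z)$ with $\mathcal{S}\subseteq\mathcal{T}(\bar z)$ by upper semicontinuity of the top sets, and contradict Assumption \ref{regularity} via the row-submatrix and least-singular-value continuity argument. The one substantive divergence is the step establishing $x(y^k_+(z^k),z^k)\to\bar z$: the paper gets this in one line from the weak dual error bound (Lemma \ref{weak error bound-appe}), since $\|y^k-y^k_+(z^k)\|\to 0$ forces $\|x(y^k_+(z^k),z^k)-x^*(z^k)\|\to 0$, whereas you instead extract a limit $\bar y$ of $y^k_+(z^k)$, assert $\bar y\in Y(\bar z)$, and invoke continuity of $x(\cdot,\cdot)$ from Lemma \ref{Lp_continue} together with Lemma \ref{eqvl}. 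Your route works, but the parenthetical ``follows as in Lemma \ref{limit}'' is a slight gloss: Lemma \ref{limit} as stated only concerns the sequence $\{z^k\}$, so to place $\bar y$ in $Y(\bar z)$ you should add one line --- pass to the limit in the fixed-point identity $y^k_+(z^k)=P_Y\big(y^k+\alpha\nabla_yK(x(y^k,z^k),z^k;y^k)\big)$ to obtain $\bar y=P_Y\big(\bar y+\alpha\nabla_y d(\bar y,\bar z)\big)$, which by concavity of $d(\cdot,\bar z)$ and Danskin's theorem (Lemma \ref{Lipschitz-dual}) certifies $\bar y\in\arg\max_{y\in Y} d(y,\bar z)$. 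The paper's error-bound route is shorter and avoids the dual limit point altogether; yours is more self-contained in that it does not consume Lemma \ref{weak error bound-appe}, and your explicit handling of the negation with ``or'' (either singular value degenerating, settled by one limiting matrix) and your formal limit argument for $\mathcal{S}\subseteq\mathcal{T}(\bar z)$ are, if anything, more careful than the paper's wording.
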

\begin{proof}
We prove it by contradiction. Suppose it is not true, there exist $\{x^k\}$, $\{y^k\}\subseteq Y$ and $\{z^k\}\subseteq \mathcal{B}(R(x^0, y^0, z^0))$ such that
$\gamma(M_{\mathcal{T}^k}(x^*(z^k))), \gamma(M_{\mathcal{T}^k}(x(y_+^k(z^k), z^k)))\rightarrow 0$
and
$$\max\{\|x^k-x^k_+(y^k, z^k)\|, \|y^k-y^k_+(z^k)\|, \|x^k_+(y^k, z^k)-z^k\|\}\rightarrow 0,$$
where $\mathcal{T}^k=\mathcal{T}(x^*(z^k))\cup \mathcal{T}(x(y_+^k(z^k), z^k))$.
Since $\mathcal{T}^k$ has only finite choice, without loss of generality, we assume that $\mathcal{T}^k=\mathcal{T}$ for any $k$(passing to a sub-sequence if necessary).
By Lemma \ref{limit}, there exists a $\bar{z}\in X^*$ such that $z^k\rightarrow \bar{z}$.
Hence, by Lemma \ref{Lp_continue} and Lemma \ref{eqvl}, we have
$$x^*(z^k)\rightarrow x^*(\bar{z})=\bar{z}.$$
Therefore by the definition of $\mathcal{T}(x)$, when $k$ is sufficiently large, $\mathcal{T}(x^*(z^k)\subseteq \mathcal{T}(x^*(\bar{z})) =\mathcal{T}(\bar{z}))$.
 Moreover, since $\|y^k-y^k_+(z^k)\|\rightarrow 0$, by  Lemma \ref{weak error bound-appe}, we have
$$\|x(\bar{y}^k_+(z^k), z^k)-x^*(z^k)\|\rightarrow 0.$$
and hence $\mathcal{T}(x(y^k_+(z^k), z^k))\subseteq\mathcal{T}(\bar{z})$.
Then $\mathcal{T}^k\subseteq \mathcal{T}(\bar{z})$ and $\gamma(M)_{\mathcal{T}^k}=0$, which contradicts Assumption \ref{regularity}.
\end{proof}
We then can attain a result similar to Lemma \ref{inverse}.
\begin{lemma}\label{inverse2}
If Assumption \ref{regularity} holds for \eqref{minmax2} , there exists $\delta>0$, such that if
$$\|z\|\le R(x^0, y^0, z^0),$$
and
$$\max\{\|x-x_+(y, z)\|, \|y-y_+(z)\|, \|x_+(y, z)-z\|\}<\delta$$
we have
$$\mathrm{dist}(y_+(z), y(z))< \lambda\|x^*(z)-x(y_+(z), z)\|$$ for some constant $\lambda>0$.
\end{lemma}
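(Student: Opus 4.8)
The plan is to follow the proof of Lemma~\ref{inverse} almost verbatim, with two changes forced by the weaker hypothesis. Since strict complementarity is no longer available, I cannot invoke Lemma~\ref{same active set} to conclude $\mathcal{T}(x^*(z))=\mathcal{T}(x(y_+(z),z))$; instead I work on the union index set $\mathcal{T}=\mathcal{T}(y,z)=\mathcal{T}(x^*(z))\cup\mathcal{T}(x(y_+(z),z))$ and replace the role of Lemma~\ref{nonsingular} by Lemma~\ref{nonsingular2}. First I would fix $\delta$ small enough that Lemma~\ref{nonsingular2} applies, so that $\gamma(M_{\mathcal{T}}(x^*(z)))\ge\gamma$ and $\gamma(M_{\mathcal{T}}(x(y_+(z),z)))\ge\gamma$. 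I would then record the support facts: by the KKT system \eqref{KKTfororiginal-appe}, the support of any $y\in Y(z)$ lies in $\mathcal{T}(x^*(z))\subseteq\mathcal{T}$, and, for $\delta$ small, a limiting argument along the lines of the proof of Lemma~\ref{nonsingular2} (using only that the residuals vanish and Lemma~\ref{limit}) places the support of $y_+(z)$ in $\mathcal{T}(x(y_+(z),z))\subseteq\mathcal{T}$. Hence both $y(z)$ and $y_+(z)$ vanish off $\mathcal{T}$, and the whole argument reduces to the $\mathcal{T}$-restricted vectors $(y(z))_{\mathcal{T}}$ and $(y_+(z))_{\mathcal{T}}$.

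Next I would write the optimality systems for $x(y_+(z),z)$ and for $x^*(z)$ exactly as in \eqref{linear-equation}--\eqref{linear-equation2}, but with $M$ replaced by $M_{\mathcal{T}}$:
\[
M_{\mathcal{T}}^{T}(x(y_+(z),z))\,(y_+(z))_{\mathcal{T}}+\begin{Bmatrix}p(x(y_+(z),z)-z)\\0\end{Bmatrix}=(0,\dots,0,1)^{T},
\]
and the analogous identity for $x^*(z)$ with $(y(z))_{\mathcal{T}}$. Subtracting and rearranging as in \eqref{linear-equation3} yields
\[
M_{\mathcal{T}}^{T}(x^*(z))\big((y(z))_{\mathcal{T}}-(y_+(z))_{\mathcal{T}}\big)=\big(M_{\mathcal{T}}^{T}(x(y_+(z),z))-M_{\mathcal{T}}^{T}(x^*(z))\big)(y_+(z))_{\mathcal{T}}-\begin{Bmatrix}p(x(y_+(z),z)-x^*(z))\\0\end{Bmatrix}.
\]
The right-hand side is $O(\|x^*(z)-x(y_+(z),z)\|)$: the Lipschitz continuity of each $\nabla_x f_i$ bounds $\|M_{\mathcal{T}}(x(y_+(z),z))-M_{\mathcal{T}}(x^*(z))\|$ by $\sqrt{m}\,L\|x(y_+(z),z)-x^*(z)\|$, and $\|(y_+(z))_{\mathcal{T}}\|\le1$, exactly as in Lemma~\ref{inverse}.

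The step needing genuine care, and the main obstacle, is the final inversion. On the enlarged set $\mathcal{T}$ the matrix $M_{\mathcal{T}}(x^*(z))$ is only of \emph{full column} rank, so $M_{\mathcal{T}}^{T}(x^*(z))$ has a nontrivial kernel and the bound $\gamma\|v\|\le\|M_{\mathcal{T}}^{T}(x^*(z))v\|$ holds only for $v\in\operatorname{range}(M_{\mathcal{T}}(x^*(z)))$, i.e.\ for $v\perp\ker M_{\mathcal{T}}^{T}(x^*(z))$; this is precisely what Lemma~\ref{nonsingular2} delivers, and it is the reason the statement measures $\mathrm{dist}(y_+(z),Y(z))$ rather than a distance to a single unique dual point. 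To exploit it I would choose $y(z)$ to be the point of $Y(z)$ nearest to $y_+(z)$. The feasible directions that keep a dual iterate in $Y(z)$ are the $w$ with $\sum_i w_i\nabla_x f_i=0$ and $\mathbf{1}^{T}w=0$ supported on $\mathcal{T}$, which span exactly $\ker M_{\mathcal{T}}^{T}(x^*(z))$; hence $(y(z)-y_+(z))_{\mathcal{T}}$ is orthogonal to this kernel and lies in $\operatorname{range}(M_{\mathcal{T}}(x^*(z)))$, where the singular-value lower bound is effective. Applying it and noting $\|(y(z)-y_+(z))_{\mathcal{T}}\|=\mathrm{dist}(y_+(z),Y(z))=\mathrm{dist}(y_+(z),y(z))$ gives $\mathrm{dist}(y_+(z),y(z))\le\lambda\|x^*(z)-x(y_+(z),z)\|$ with $\lambda=(p+\sqrt{m}\,L)/\gamma$, which completes the proof.
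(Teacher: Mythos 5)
Your proposal reproduces the paper's argument step for step --- the union top-set $\mathcal{T}=\mathcal{T}(x^*(z))\cup\mathcal{T}(x(y_+(z),z))$, Lemma~\ref{nonsingular2}, the support facts, the two restricted optimality systems \eqref{KKT1} and \eqref{KKTforproximal}, the subtraction identity, the bound $\sqrt{m}\,L\|x(y_+(z),z)-x^*(z)\|$ on the matrix difference via Lipschitz continuity of the $\nabla_x f_i$, and even the same constant $\lambda=(p+\sqrt{m}\,L)/\gamma$ --- except at the final inversion, and that is exactly where there is a genuine problem. Your premise that $M_{\mathcal{T}}(x^*(z))$ is ``only of full column rank,'' so that $M_{\mathcal{T}}^{T}(x^*(z))$ has a nontrivial kernel, takes the phrasing of Assumption~\ref{regularity} literally, but the way the assumption is used throughout the paper shows the intended meaning is the opposite orientation: Lemma~\ref{uniqueness} proves full \emph{row} rank of $M(x)$ (equivalently, full column rank of $M^{T}(x)$, as Corollary~\ref{uniqueness1} states), and the event in the proof of Proposition~\ref{generic} is ``$M(x^*)$ is not of full row rank.'' In other words, the assumption is that the rows $(\nabla f_i(x^*)^{T},1)$, $i\in\mathcal{T}(x^*)$, are linearly independent. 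The proof of Lemma~\ref{nonsingular2} shows that for small residuals $\mathcal{T}(y,z)\subseteq\mathcal{T}(\bar z)$ for a limit solution $\bar z$, and a \emph{row} submatrix of a full-row-rank matrix is again of full row rank; hence $M_{\mathcal{T}}^{T}(x^*(z))$ is injective on all of $\mathbb{R}^{|\mathcal{T}|}$, $\ker M_{\mathcal{T}}^{T}(x^*(z))=\{0\}$, and the plain bound $\gamma\|v\|\le\|M_{\mathcal{T}}^{T}(x^*(z))v\|$ applies to the entire difference vector --- which is precisely how the paper concludes, with no orthogonality argument at all. (Note that under your literal full-column-rank reading, the proof of Lemma~\ref{nonsingular2} itself would collapse, since row submatrices do not inherit full column rank; this is a further sign that reading is not the intended one.)

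More importantly, the patch you propose would not survive if the kernel really were nontrivial. Choosing $y(z)=P_{Y(z)}(y_+(z))$ gives only the normal-cone inclusion $y_+(z)-y(z)\in N_{Y(z)}(y(z))$, i.e.\ the one-sided inequality $\langle y_+(z)-y(z),\,w-y(z)\rangle\le 0$ for all $w\in Y(z)$. Since $Y(z)$ is polyhedral and nonnegativity constraints may be active at $y(z)$, this normal cone is generally strictly larger than the orthogonal complement of the feasible directions within $Y(z)$, so orthogonality of $(y(z)-y_+(z))_{\mathcal{T}}$ to $\ker M_{\mathcal{T}}^{T}(x^*(z))$ does not follow. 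Likewise, while differences of elements of $Y(z)$ (restricted to $\mathcal{T}$) do lie in that kernel by \eqref{KKTforproximal}, they need only generate a cone inside it, not span it, so your claim that these directions ``span exactly'' the kernel is unjustified. The repair is simply to delete this detour: invoke Lemma~\ref{nonsingular2} as delivering $\sigma_{\min}(M_{\mathcal{T}}(x^*(z)))\ge\gamma$ with $M_{\mathcal{T}}$ of full row rank, and conclude directly from the subtraction identity, as the paper does.
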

\begin{proof}
By Lemma \ref{nonsingular2}, we can find a $\delta>0$ and a $\gamma>0$, such that if
$$\|z\|\le R(x^0, y^0, z^0),$$
and
$$\max\{\|x-x_+(y, z)\|, \|y-y_+(z)\|, \|x_+(y, z)-z\|\}<\delta,$$
then
$\gamma(M_{\mathcal{T}(y, z)}(x^*(z)))\ge \gamma$ and $\gamma(M_{\mathcal{T}(y, z)}(x(y_+(z), z)))\ge \gamma$, where
$$\mathcal{T}(y, z)=\mathcal{T}(x^*(z))\cup \mathcal{T}(x(y_+(z), z)).$$
Let $\mathcal{T}=\mathcal{T}(y, z)$.
Then for $i\notin \mathcal{T}$, $y_i(z)=(y_+(z))_i=0$ and $\|y(z)-y_+(z)\|=\|(y(z))_{\mathcal{T}}-(y_+(z))_{\mathcal{T}}\|$.
Using the optimality conditions for $x(y_+(z), z)$ \eqref{KKT1} and $x^*(z)$ \eqref{KKTforproximal}, we have
\begin{equation}\label{eqn:linear-equation}
M^T_{\mathcal{T}}(x(y_+(z), z))(y_+(z))_{\mathcal{T}}+\begin{Bmatrix}
p(x(y_+(z), z)-z)\\
0
\end{Bmatrix}=(0, 0, \cdots, 0, 1),
\end{equation}
and
\begin{equation}\label{eqn:linear-equation2}
M^T_{\mathcal{T}}(x^*(z))(y(z))_{\mathcal{T}}+\begin{Bmatrix}
p(x^*(z)-z)\\
0
\end{Bmatrix}=(0, 0, \cdots, 0, 1).
\end{equation}
Note that \eqref{eqn:linear-equation} can be written as
\begin{equation}\label{eqn:linear-equation3}
M^T_{\mathcal{T}}(x^*(z))(y_+(z))_{\mathcal{T}}=M^T_{\mathcal{T}}(x^*(z))(y_+(z))_{\mathcal{T}}-M^T_{\mathcal{T}}(x(y_+(z), z))(y_+(z))_{\mathcal{T}}-\begin{Bmatrix}
p(x(y_+(z), z)-z)\\
0
\end{Bmatrix}.
\end{equation}
By \eqref{eqn:linear-equation2} and \eqref{eqn:linear-equation3}, we have
$$M^T_{\mathcal{T}}(x^*(z))(y(z)-y_+(z))=(M^T_{\mathcal{T}}(x(y_+(z), z))-M^T_{\mathcal{T}}(x^*(z)))(y_+(z))_{\mathcal{T}}-p(x(y_+(z), z)-x^*(z)).$$
Therefore, taking norms to the above and the Lemma \ref{nonsingular}, we have
\begin{eqnarray*}
\gamma\|(y_+(z))_{\mathcal{T}}-(y(z))_{\mathcal{T}}\|&\le&\sqrt{m}L\|x(y_+(z), z)-x^*(z)\|\|(y_+(z))_{\mathcal{T}}\|+p\|x(y_+(z), z)-x^*(z)\|\\
&\le&(\sqrt{m}L+p)\|x^*(z)-x(y_+(z), z)\|,
\end{eqnarray*}
where the first inequality uses the Lipschitz-continuity of $\nabla_xf_i$ and the second is because $\|y_+(z)\|\le 1$.
Hence, we finish the proof with $\lambda=(p+\sqrt{m}L)/\gamma$.
\end{proof}
Then Lemma \ref{inverse2} and Lemma \ref{weak error bound} yield Theorem \ref{discrete2}.
\subsection{Proof of Proposition \ref{generic}}
For a set $\mathcal{S}\subseteq [m]$, we define
$$M_{\mathcal{S}}(x; \ell_{\mathcal{S}})=\begin{Bmatrix}
J_{\mathcal{S}F(x; \ell_{\mathcal{S}})} &\mathbf{1}
\end{Bmatrix},$$
where $J_{\mathcal{S}} F(x; \ell_{\mathcal{S}})=((\Psi_i(x)-\ell_i)\nabla_x\Psi_i(x)\mid i\in \mathcal{S})$.

\begin{proof}
Define the event $\mathcal{E}_{\mathcal{T}, \mathcal{P}}$ to be:
there exists a solution $(x^*, y^*)\in W^*$ , such that $M(x^*)$  is not of full row rank, $\mathcal{T}(x^*)=\mathcal{T}$  and $\Psi_i(x^*)-\ell_i\ge0$ for $i\in \mathcal{P}$ and $\Psi_i(x^*)-\ell_i$ for $i\notin \mathcal{P}$.
Then Proposition \ref{generic} is equivalent to the claim:
$$\mathrm{Pr}(\cup_{\mathcal{T}\subseteq [m], \mathcal{P}\subseteq \mathcal{T}}\mathcal{E}_{\mathcal{T}, \mathcal{P}})=0.$$
Since there are only finite choice of the sets $\mathcal{T}$ and $\mathcal{P}$, we only need to prove that for any $\mathcal{T}\subseteq [m]$ and $\mathcal{P}\subseteq \mathcal{T}$, $\mathcal{E}_{\mathcal{T}, \mathcal{P}} $ holds with probability $0$,
Without loss of generality, we let $\mathcal{T}=\{1, 2, \cdots, k\}$ and $\mathcal{P}=\{1, 2,\cdots, p\}$ with $p\le k$.
We define $\delta_i$ for $i\in [k]$  as $\delta_i=1$ for $i\in \mathcal{P}$ and $\delta_i=-1$ otherwise.
Then if $\mathcal{E}_{\mathcal{T}, \mathcal{P}}$ holds, there exists an $x^*=(x_1^*, \cdots, x_{n}^*)^T\in X^*$ and $x_{n+1}\in \mathbb{R}$, such that

\begin{enumerate}
\item $(x_1^*, \cdots, x_n^*)^T\in X^*$;
\item $x_{n+1}^*\ge 0$;
\item $\mathcal{T}(x^*)=\mathcal{T}$;
\item $\Psi_i(x^*)-\ell_i=x_{n+1}^*\ge 0$ for $i\in \mathcal{P}$ and $\Psi_i(x^*)-\ell_i=-x^*_{n+1}\le 0$ for $i\notin \mathcal{P}$.
\item $M_{\mathcal{T}}(x_1^*, \cdots, x_n^*; \ell_1, \cdots, \ell_k)$ is row rank deficient.
\end{enumerate}
Define $\bar{X}^*_{\mathcal{T}, \mathcal{P}}(\ell_1, \cdots, \ell_k)$ to be the set of all $x^*\in X^*$ satisfying the  above conditions.
Consider the map $G: \mathbb{R}^{n+1}\rightarrow \mathbb{R}^k$ defined as
$$G(x_1, \cdots, x_{n+1})=(\Psi_1(x_1, \cdots, x_n)-\delta_1x_{n+1}, \cdots, \Psi_k(x_1, \cdots, x_n)-\delta_kx_{n+1})^T.$$
Then $G(x_1^*, \cdots, x_{n+1}^*)=(\ell_1, \cdots, \ell_k)$ for any $(x_1^*, \cdots, x_{n+1}^*)^T\in \bar{X}^*_{\mathcal{T}, \mathcal{P}}(\ell_1, \cdots, \ell_k)$.
Define the set $\bar{X}_{\mathcal{T}, \mathcal{P}}\subseteq \mathbb{R}^{n+1}$ be the collection of all $(x_1, \cdots, x_{n+1})$ satisfying:
\begin{enumerate}
\item $x_{n+1}> 0$.
\item there exist $\bar{\ell}_1, \cdots, \bar{\ell}_k$ with $\Psi_i(x_1, \cdots, x_n)-\bar{\ell}_i=x_{n+1}$ for $i\in
\mathcal{P}$ and $\Psi_i(x_1, \cdots, x_n)-\ell_i=-x_{n+1}$ for $i\notin \mathcal{P}$.
\item $M_{\mathcal{T}}(x_1, \cdots, x_n; \bar{\ell}_1, \cdots, \bar{\ell}_k)$ is rank deficient.
\item $(\bar{\ell_1}, \cdots, \bar{\ell}_k)^T\in \prod_{i=1}^k\mathcal{L}_i$.
\end{enumerate}
Therefore, if $\mathcal{E}_{\mathcal{T}, \mathcal{P}}$ holds, we have
$$(\ell_1, \cdots, \ell_m)^T\in (G(\bar{X}_{\mathcal{T}, \mathcal{P}})\cap \prod_{i=1}^k\mathcal{L}_i)\times \prod_{i=k+1}^m\mathcal{L}_i\cup \Omega.$$
For $(x_1, \cdots, x_{n+1})^T\in \bar{X}_{\mathcal{T}, \mathcal{P}}$,
notice that $J G(x_1, \cdots, x_{n+1})$ is attained by doing elementary matrix transformation to the matrix $M_{\mathcal{T}}(x_1, \cdots, x_n; \bar{\ell}_1, \cdots, \bar{\ell}_k)$, i.e., multiplying the first $k$ columns of $M_{\mathcal{T}}(x_1, \cdots, x_n; \bar{\ell}_1, \cdots, \bar{\ell}_k)$ by $1/x_{n+1}$ and multiplying the $k+1$-th column of $M_{\mathcal{T}}(x_1, \cdots, x_n; \bar{\ell}_1, \cdots, \bar{\ell}_k)$ by $-1$ and then multiplying the $i$-th row by $\delta_i$ for $i\in [n]$. Therefore, $M_{\mathcal{T}}(x_1, \cdots, x_n; \bar{\ell}_1, \cdots, \bar{\ell}_k)$ is also rank deficient.

Consequently, $G(x_1, \cdots, x_{n+1})$ with $(x_1, \cdots, x_{n+1})^T\in \bar{X}_{\mathcal{T}, \mathcal{P}}$ is a critic value of $G$ (see \cite{berger2012differential}). Then by Sard's Theorem \cite{berger2012differential}, $G(\bar{X}_{\mathcal{T}, \mathcal{P}})$ is a zero measure set in $\mathbb{R}^k$.
Hence, $G(\bar{X}_{\mathcal{T}, \mathcal{P}})\cap \prod_{i=1}^k\mathcal{L}_i$ is a zero measure set in $\prod_{i=1}^k\mathcal{L}_i$.
Recall that if $\mathcal{E}_{\mathcal{T},\mathcal{P}}$ holds, we have
$$(\ell_1, \cdots, \ell_m)^T\in \mathcal{Z}= (G(\bar{X}_{\mathcal{T}, \mathcal{P}})\cap \prod_{i=1}^k\mathcal{L}_i)\times \prod_{i=k+1}^m\mathcal{L}_i\cup \Omega.$$
By the above analysis, $G(\bar{X}_{\mathcal{T}, \mathcal{P}})\cap \prod_{i=1}^k \mathcal{L}_i$ is a zero measure set in $\prod_{i=1}^k\mathcal{L}_i$. Hence, $(G(\bar{X}_{\mathcal{T}, \mathcal{P}})\cap \prod_{i=1}^k\mathcal{L}_i)\times \prod_{i=k+1}^m\mathcal{L}_i$ is a zero measure set in $\prod_{i=1}^m\mathcal{L}_i$. Also by Assumption \ref{mild}, $\Omega$ is a zero measure set in $\prod_{i=1}^m\mathcal{L}_i$. Consequently, $\mathcal{Z}$ is a zero measure set in $\prod_{i=1}^m\mathcal{L}_i$.
 Then by the continuity of the distribution of $\ell$, we finish the proof.
\end{proof}

\section{Proof of Theorem \ref{general-for-optimization-stationary-point}}\label{Appendix: new theorem}

\begin{proof}
    According to the two inequalities \ref{key1} and \ref{beta-bound},
the weaker bound Lemma \ref{weak error bound} and Lemma \ref{Lp_continue},
taken $\beta = \epsilon^2$, we have
\begin{equation}\label{v2-key4-appe}
\max\{\|x^*(z^t)-x(y^t_+(z^t), z^t)\|, \|x^{t+1}-x(y^t_+(z^t), z^t)\|, \|z^t-x^{t+1}\|\}\le \mathcal{O}(\epsilon),
\end{equation}
then by triangle inequality,
\begin{equation}\label{final-equa}
\|z^t - x^*(z^t)\| \le \mathcal{O}(\epsilon).
\end{equation}
\end{proof}

\section{Details in Experiments}\label{Appendix: experiment}
Recall the procedure of training a robust neural network against adversarial attacks can be formulated as a min-max problem:
\begin{align}
\label{eq: Madry2 appendix}
\min_{\mathbf{w}}\, \; \sum_{i=1}^{N}\;\max_{\delta_i,\;\text{s.t.}\;|\delta_i|_{\infty}\leq \varepsilon}   {\ell}(f(x_i+\delta_i;\mathbf{w}), y_i),
\end{align}
where $\mathbf{w}$ is the parameter of the neural network, the pair $(x_i,y_i)$ denotes the $i$-th data point,
and $\delta_i$ is the perturbation added to data point~$i$.

As \eqref{eq: Madry2 appendix} is nonconvex-nonconcave and thus difficult to solve directly,
researchers introduce an approximation of \eqref{eq: Madry2 appendix} \cite{nouiehed2019solving}
where the approximated problem has a concave inner problem.
The approximation is first replacing the inner maximization problem in \ref{eq: Madry2 appendix} with a finite max problem:
\begin{equation} \label{eq:adversaryxhatformulation appendix}
\min_{\mathbf{w}}\, \; \sum_{i=1}^{N}\;\max \left\{   {\ell}(f(\hat{x}_{i0}(\mathbf{w});\mathbf{w}), y_i), \ldots,{\ell}(f(\hat{x}_{i9}(\mathbf{w});\mathbf{w}), y_i) \right\},
\end{equation}
where each $\hat{x}_{ij}(\mathbf{w})$ is the result of a targeted attack on sample $x_i$ by changing the output of the network to label~$j$.

To obtain the targeted attack $\hat{x}_{ij}(\mathbf{w})$, we need to introduce an additional procedure.
Recall the images in MNIST have 10 classifications,
thus the last layer of the neural network architecture for learning classification have 10 different neurons.
To obain any targeted attack $\hat{x}_{ij}(\mathbf{w})$, we perform gradient ascent for $K$ times:
$$
    x_{ij}^{k+1} = \text{Proj}_{B(x, \varepsilon)}\Big[x_{ij}^k + \alpha \nabla_x(Z_{j}(x_{ij}^k, \mathbf{w})-Z_{y_i}(x_{ij}^k, \mathbf{w}))\Big],\; k=0, \cdots, K-1,
$$

and let $ \hat{x}_{ij}(\mathbf{w})= x_{ij}^K$. Here, $Z_j$ is the network logit before softmax corresponding to label $j$; $\alpha>0$ is the step-size; and $\text{Proj}_{B(x, \varepsilon)}[\cdot]$ is the projection to the infinity ball with radius $\varepsilon$ centered at $x$.  Using the same setting in \cite{nouiehed2019solving}, we set the iteration number as $K = 40$, the stepsize as $\alpha = 0.01$, and the perturbation level $\epsilon$ chosen from $\{0.0, 0.1, 0.2, 0.3, 0.4\}$.

Now we can replace the finite max problem~\eqref{eq:adversaryxhatformulation appendix} with a concave problem over a probabilistic simplex, where the entire problem is non-convex in~$w$, but concave in~$\mathbf{t}$:
\begin{align}
\label{eq:finite-max2-appe}
\min_{\mathbf{w}}\;\sum_{i=1}^{N}\;\max_{\mathbf{t} \in \mathcal{T}}\sum_{j=0}^{9}\;{t_j}\ell\left(f\left(x_{ij}^K;\mathbf{w}\right),y_i\right),\; \mathcal{T} =\{(t_1, \cdots, t_m)\mid \sum_{i=1}^mt_i=1, t_i\ge 0\}.
\end{align}

We use Convolutional Neural Network(CNN) with the architecture detailed in Table~\ref{tab:net-arch-robust} in the experiments. This setting is the same as in \cite{nouiehed2019solving}.

The results are listed in Table \ref{tab:robust_nn_results}. The first three lines are the results obtained from \cite{nouiehed2019solving} and the fourth line is obtained by using the code provided in  \cite{nouiehed2019solving} to train their algorithm. As for comparison, we run our algorithm \ref{Alg2} for the same number of iterations (100 iterations) with parameter $p= 0.2, \beta = 0.8$ and $\alpha = 0.5$. 
In the experiment, to compute the projection of a vector of dimension $d$ over the probability simplex, we use the algorithm from \cite{wang2013projection} which has a complexity $\mathcal{O}(d \log d)$.

We also perform robust training on CIFAR10 \cite{krizhevsky2009learning} and comparing with the algorithm in \cite{nouiehed2019solving} after 30 epochs. As shown in Figure \ref{fig: cifar10}, our algorithm still outperforms \cite{nouiehed2019solving} in convergence speed. To obtain the targeted attack at each epoch, we set the iteration number $K$ as 10, the stepsize as $0.007$, and the perturbation level as 0.031 which are the same settings appear in \cite{zhang2019theoretically}. We achieve robust accuracy $38.5\%$ and testing accuracy $82.6\%$ which are comparable to the results from the literature \cite{wong2018scaling} in robust training.

\newpage

\begin{table}[!ht]
\centering

\begin{tabular}{@{}ll@{}}
\toprule
Layer Type           & Shape               \\ \midrule
Convolution $+$ ReLU & $5 \times 5 \times 20$     \\
Max Pooling          & $2 \times 2$                \\
Convolution $+$ ReLU & $5 \times 5 \times 50$ \\
Max Pooling          & $2 \times 2$             \\
Fully Connected $+$ ReLU & $800$ \\
Fully Connected $+$ ReLU & $500$ \\
Softmax & $10$ \\ \bottomrule
\end{tabular}
\vskip 0.1in
\caption{Model Architecture for the MNIST dataset.}
\label{tab:net-arch-robust}
\end{table}

\begin{figure}[!h]
    \centering
    \includegraphics[width=0.4\textwidth]{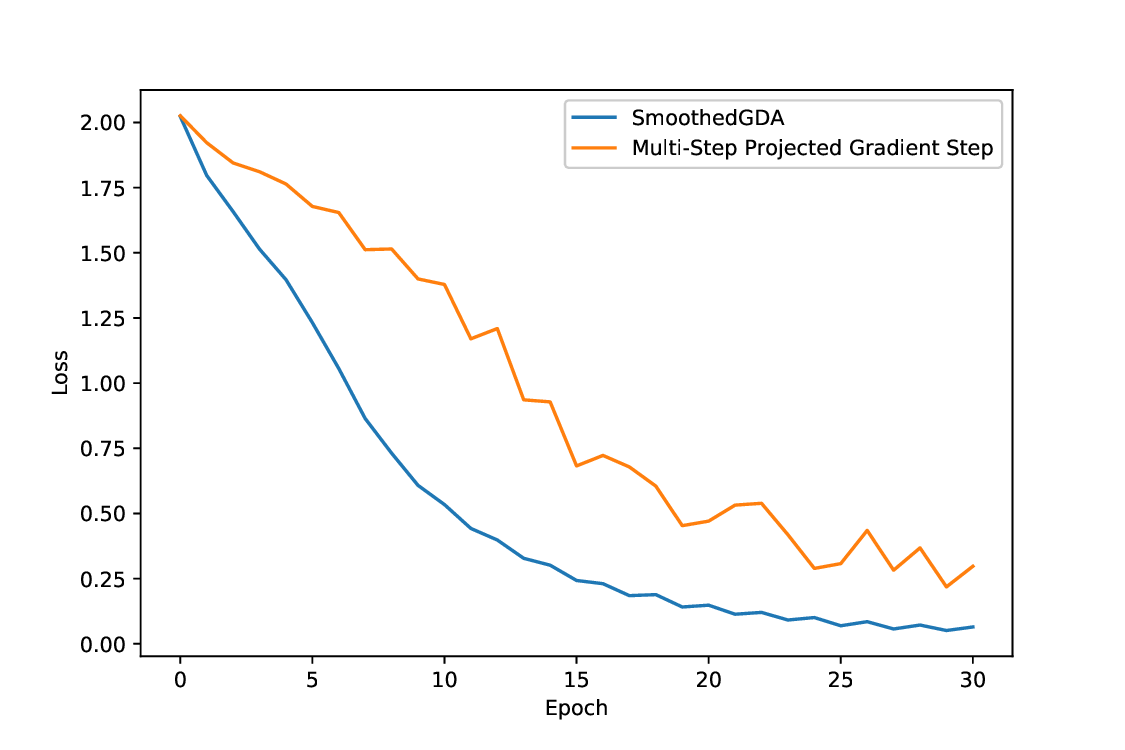}
    \caption{Convergence speed of Smoothed-GDA and the algorithm in \cite{nouiehed2019solving} on CIFAR10.
    }
    \label{fig: cifar10}
\end{figure}



\end{document}